\newcommand{\A}{\mathbb{A}}
\newcommand{\Q}{\mathbb{Q}}
\newcommand{\R}{\mathbb{R}}
\newcommand{\Z}{\mathbb{Z}}
\newcommand{\cExt}{\mathcal{E}xt}
\newcommand{\cF}{\mathcal{F}}
\newcommand{\cG}{\mathcal{G}}
\newcommand{\cH}{\mathcal{H}}
\newcommand{\cHom}{\mathcal{H}om}
\newcommand{\cO}{\mathcal{O}}
\newcommand{\cU}{\mathcal{U}}
\newcommand{\cV}{\mathcal{V}}
\newcommand{\cX}{\mathcal{X}}
\newcommand{\cY}{\mathcal{Y}}
\newcommand{\cZ}{\mathcal{Z}}
\newcommand{\scC}{\mathscr{C}}
\newcommand{\bA}{\mathbf{A}}
\newcommand{\bC}{\mathbf{C}}
\newcommand{\bF}{\mathbf{F}}
\newcommand{\bG}{\mathbf{G}}
\newcommand{\bH}{\mathbf{H}}
\newcommand{\bL}{\mathbf{L}}
\newcommand{\bM}{\mathbf{M}}
\newcommand{\bN}{\mathbf{N}}
\newcommand{\bP}{\mathbf{P}}
\newcommand{\bQ}{\mathbf{Q}}
\newcommand{\bT}{\mathbf{T}}
\newcommand{\gd}{\mathfrak{d}}
\def\gg{\mathfrak{g}}
\newcommand{\gp}{\mathfrak{p}}
\newcommand{\gU}{\mathfrak{U}}
\newcommand{\gV}{\mathfrak{V}}
\newcommand{\gW}{\mathfrak{W}}
\newcommand{\ab}{\mathrm{ab}}
\newcommand{\BS}{\mathrm{B.S.}}
\newcommand{\cd}{\widetilde{\mathrm{cd}}}
\newcommand{\cl}{\mathrm{cl.}}
\newcommand{\coker}{\mathrm{coker}\ }
\newcommand{\Cong}{\mathrm{Cong}}
\newcommand{\cpct}{\mathrm{c}}
\newcommand{\cts}{\mathrm{cts}}
\newcommand{\Ext}{\mathrm{Ext}}
\newcommand{\Group}{\mathrm{Group}}
\newcommand{\GL}{\mathrm{GL}}
\newcommand{\tH}{\tilde H}
\newcommand{\Hom}{\mathrm{Hom}}
\newcommand{\im}{\mathrm{Im}}
\newcommand{\ind}{\mathrm{ind}}
\newcommand{\hind}{\widehat{\mathrm{ind}}}
\newcommand{\la}{k_{\gp}-\mathrm{loc.an.}}
\newcommand{\limd}[1]{
	\begin{array}{c}
		\lim\\
		\stackrel{\textstyle \rightarrow}{\scriptstyle #1}
	\end{array}
}
\newcommand{\limp}[1]{
	\begin{array}{c}
		\lim\\
		\stackrel{\textstyle \leftarrow}{\scriptstyle #1}
	\end{array}
}
\newcommand{\limprojs}{\limp{s}}
\newcommand{\limdr}{\limd{r}}
\newcommand{\limdKp}{\limd{K_{\gp}}}
\newcommand{\modG}{\mathbf{mod}_{G}}
\newcommand{\modaG}{\mathbf{mod}_{G}^{\mathbf{adm}}}
\newcommand{\rank}{\mathrm{rank}}
\newcommand{\rel}{\mathrm{rel.}}
\newcommand{\Rest}{\mathrm{Rest}}
\newcommand{\sh}{\mathbf{sheaf}}
\newcommand{\sing}{\mathrm{sing}}
\newcommand{\SL}{\mathrm{SL}}
\newcommand{\sm}{\mathrm{smooth}}
\newcommand{\smooth}{\mathrm{smooth}}
\newcommand{\SmoG}{\mathbf{smo}_{G}}
\newcommand{\smoothind}{\mathrm{smoothind}}
\newcommand{\st}{\mathrm{st}}
\newcommand{\univ}{\mathrm{univ}}
\newcommand{\verl}{\mathrm{Verl}}
\newcommand{\uW}{\underline{W}}
\newcommand{\Zps}{\Z/p^{s}}
\newcommand{\uZps}{\underline{\Zps}}
\newtheorem{theorem}{Theorem}
\newtheorem{lemma}{Lemma}
\newtheorem{corollary}{Corollary}
\newtheorem{proposition}{Proposition}
\newtheorem{conjecture}{Conjecture}
\theoremstyle{definition}
\theoremstyle{remark}
\newtheorem*{remark}{Remark}
\begin{document}

\title{On Emerton's $p$-adic Banach spaces}
\author{Richard Hill}
\date{3rd August 2007}
\begin{abstract}
The purpose of the current paper is to introduce some new methods
 for studying the $p$-adic Banach spaces introduced by Emerton \cite{emerton}.
We first relate these spaces to more familiar sheaf cohomology groups.
As an application, we obtain a more general version of Emerton's spectral sequence.
We also calculate the spaces in some easy cases.
As a consequence, we obtain a number of vanishing theorems.
\end{abstract}
\subjclass{11F75,11F33}
\address{Department of Mathematics, University College London.}
\email{rih@math.ucl.ac.uk}

\maketitle

\tableofcontents

\section{Introduction and Statements of Results}

\subsection{Cohomology of arithmetic quotients}

Let $\bG$ be a linear algebraic group over a number field $k$.
We choose a maximal compact subgroup $K_{\infty}\subset \bG(k_{\infty})$
and let $K_{\infty}^{\circ}$ be the identity component in $K_{\infty}$.
This paper is concerned with the cohomology of the
following ``arithmetic quotients'':
$$
	Y(K_{f})
	=
	\bG(k) \backslash \bG(\A) / K_{\infty}^{\circ}K_{f}.
$$
Fix once and for all a finite prime $\gp$ of $k$ and let $p$ be the rational prime
below $\gp$.
By a ``tame level'' we shall mean a compact open subgroup
 $K^{\gp}$ of $\bG(\A_{f}^{\gp})$.
For a field $E$ of characteristic zero,
 the level $K^{\gp}$ Hecke algebra is defined by
$$
	\cH(K^{\gp},E)
	=
	\{f:K^{\gp}\backslash \bG(\A_{f}^{\gp}) / K^{\gp} \to E :
	\hbox{$f$ has compact support} \}.
$$
Given a finite dimensional algebraic representation $W$ of $\bG$ over
an extension $E/k_{\gp}$, one defines a local system $\cV_{W}$ on
each arithmetic quotient $Y(K_{f})$.
We define the classical cohomology groups of tame level $K^{\gp}$ as follows:
$$
	H^{\bullet}_{\cl,\ast}(K^{\gp},W)
	=
	\limdKp
	H^{\bullet}_{\ast}(Y(K^{\gp}K_{\gp}),\cV_{W}),
$$
where $K_{\gp}$ ranges over the compact open subgroups of $\bG(k_{\gp})$.
The symbol ``$\ast$'' denotes either the empty symbol, meaning usual cohomology,
or ``$\cpct$'', meaning compactly supported cohomology.
There is a smooth action of $\bG(k_{\gp})$ on $H^{\bullet}_{\cl}(K^{\gp},W)$,
and there is also an action of the level $K^{\gp}$ Hecke algebra $\cH(K^{\gp},E)$.
The systems of Hecke eigenvalues arising in $H^{\bullet}_{\cl,\ast}(K^{\gp},W)$
are of considerable interest in number theory.

Traditionally, the classical cohomology groups have been studied using the theory
of automorphic representations (for example in \cite{borel-wallach}).
Recently, Emerton introduced a new method
for studying the classical cohomology groups.
Instead of studying the space of automorphic forms on $\bG$,
Emerton introduced a collection of $p$-adic Banach spaces, from which
the classical cohomology can be recovered.
Emerton's spaces are defined as follows:
$$
	\tH^{\bullet}_{\ast}(K^{\gp},\Z_{p})
	=
	\limprojs\limdKp H^{\bullet}_{\ast}(Y(K^{\gp}K_{\gp}),\Z/p^{s}).
$$
For a finite extension $E/\Q_{p}$ we also define
$$
	\tH^{\bullet}_{\ast}(K^{\gp},E)
	=
	\tH^{\bullet}_{\ast}(K^{\gp},\Z_{p})
	\otimes_{\Z_{p}}
	E.
$$
The space $\tH^{\bullet}_{\ast}(K^{\gp},E)$ has the structure
of a Banach space over $E$, where the unit ball is defined to be the
$\cO_{E}$-span of the image of $\tH^{\bullet}_{\ast}(K^{\gp},\Z_{p})$.
It is also convenient to consider the direct limit of these groups over the tame levels:
$$
	\tH^{\bullet}_{\ast}(\bG,-)
	=
	\limd{K^{\gp}}\tH^{\bullet}_{\ast}(K^{\gp},-).
$$
We have the following actions on these spaces:
\begin{enumerate}
	\item
	The group $\bG(\A_{f}^{\gp})$ acts smoothly on $\tH^{\bullet}_{\ast}(\bG,\Z_{p})$;
	the subspace $\tH^{\bullet}_{\ast}(K^{\gp},E)$ may be recovered as the
	$K^{\gp}$-invariants in $\tH^{\bullet}_{\ast}(\bG,E)$.
	\item
	The Hecke algebra $\cH(K^{\gp},E)$
	acts on $\tH^{\bullet}_{\ast}(K^{\gp},E)$.
	\item
	The group $\bG(k_{\gp})$ acts continuously,
	but not usually smoothly on the spaces
	$\tH^{\bullet}_{\ast}(K^{\gp},-)$.
	\item
	For a finite extension $E/k_{\gp}$, we define
	 $\tH^{\bullet}_{\ast}(K^{\gp},E)_{\la}$ to be the subspace
	 of $k_{\gp}$-locally analytic vectors in $\tH^{\bullet}(K^{\gp},E)$
	 (see \cite{emertonAnalytic}).
	The Lie algebra $\gg$ of $\bG$
	acts on the subspace $\tH^{\bullet}_{\ast}(K^{\gp},E)_{\la}$.
\end{enumerate}
Emerton proved (Theorem 2.2.11 of \cite{emerton})
 that his spaces are related to the classical cohomology groups
 by the following spectral sequence of smooth $\bG(k_{\gp})\times\cH(K^{\gp},E)$
 representations:
\begin{equation}
	\label{specseq}
	E_{2}^{p,q}
	=
	\Ext^{p}_{\gg}(\check W, \tH^{q}_{\ast}(K^{\gp},E)_{\la})
	\implies
	H^{p+q}_{\ast,\cl}(K^{\gp},W),
\end{equation}
where $\check W$ is the contragredient representation.

The spectral sequence shows that
 the spaces $\tH^{\bullet}_{\ast}(K^{\gp},-)$
 carry the information of all the systems of Hecke eigenvalues of
 automorphic representations of $\bG$ of cohomological type with tame level $K^{\gp}$.
On the other hand, the spaces $\tH^{\bullet}_{\ast}(K^{\gp},E)$ are not too big.
More precisely, they are \emph{continuous admissible}
 representations of $\bG(k_{\gp})$.
This means that the continuous dual space is finitely generated
 as a module over the Iwasawa algebra of a
 compact open subgroup of $\bG(k_{\gp})$
(for this definition, see \cite{schneider-teitelbaum-2002}
 or Definition 7.2.1 of \cite{emertonAnalytic}.)
As a consequence, Emerton was able in some cases to use his spaces
 to interpolate the systems of Hecke-eigenvalues into an ``eigenvariety''
 (see \cite{emerton} and also \cite{hill-eigenvarieties}
 for a number of examples when Emerton's method is successful).
In a rather different vein, the spaces $\tH^{\bullet}_{\ast}(K^{\gp},E)$
 have been used in \cite{calegari-emerton-2007pre}
 to give strong bounds on the multiplicities of automorphic representations of
 cohomological type.
The latter results are reminiscent of Iwasawa's bounds
 on the growth of class numbers.

It is easy to summarise what is known about the spaces $\tH^{\bullet}_{\ast}$.
If we let $d$ be the dimension of the arithmetic quotients $Y(K_{f})$,
then clearly $\tH^{n}_{\ast}(\bG,-)$ is zero for $n>d$.
Furthermore, Emerton showed that $\tH^{d}_{\cpct}(\bG,-)=\tH^{d}(\bG,-)$.
Emerton calculated \cite{emerton} the groups $\tH^{\bullet}_{\ast}(\bG,-)$ when $\bG$
is either $\SL_{2}/\Q$ or $\GL_{2}/\Q$ or a real rank zero group.
The case where $\bG$ is simple and simply connected
is studied in the preprint \cite{hill-eigenvarieties}.
For such groups one knows that $\tH^{0}(\bG,\Q_{p})=\Q_{p}$ and
$$
	\tH^{1}(\bG,\Q_{p})
	=
	\Hom_{\cts}(\Cong(\bG),\Q_{p})_{\bG(\A_{f}^{\gp})-\smooth},
$$
where $\Cong(\bG)$ denotes the congruence kernel of $\bG$.
Beyond this, nothing is written on the subject.
In this paper we shall give a number of new results
on Emerton's spaces.

\subsection{Results and organization of the paper}

Our first results are expressions for Emerton's $p$-adic cohomology
 groups in terms of more familiar objects.
We show in \S\ref{interpret1sec}
 Theorem \ref{reinterpret1} and Corollary \ref{reinterpretusual}
 that $\tH^{\bullet}(K^{\gp},\Z_{p})$ is isomorphic to the sheaf cohomology
 $H^{\bullet}(Y(K^{\gp}),\scC)$,
 where $Y(K^{\gp})$ is the topological space
 $\bG(k)\backslash\bG(\A)/K_{\infty}^{\circ}K^{\gp}$,
 and $\scC$ is the sheaf of continuous $\Z_{p}$-valued functions.
Such functions need not be locally constant,
 since $Y(K^{\gp})$ is a projective limit of arithmetic quotients.
We also identify $\tH^{n}_{\cpct}(K^{\gp},\Z_{p})$ as a relative cohomology group.
These results allow us to use the machinery of homological
algebra to study the spaces $\tH^{n}_{\ast}$, since they are identified as
derived functors. In particular, we show (Corollary \ref{longexactsequence})
that there is a long exact sequence:
\begin{equation}
	\label{exactseq1}
	\to
	\tH^{n}_{\cpct}(K^{\gp},\Z_{p})
	\to
	\tH^{n}(K^{\gp},\Z_{p})
	\to
	\tH^{n}_{\partial}(K^{\gp},\Z_{p})
	\to
	\tH^{n+1}_{\cpct}(K^{\gp},\Z_{p})
	\to.
\end{equation}
Here we are using the notation
$$
	\tH^{n}_{\partial}(K^{\gp},\Z_{p})
	=
	\limprojs
	\limdKp
	H^{n}(\partial Y(K_{\gp}K^{\gp})^{\BS},\Z/p^{s}),
$$
where $\partial Y(K_{f})^{\BS}$ is the ``Borel--Serre'' boundary of $Y(K_{f})$.
More precisely,
 recall that the arithmetic quotients $Y(K_{f})$ are not necessarily compact.
However we may embed $Y(K_{f})$ as a dense open subspace of a finite
simplicial complex $Y(K_{f})^{\BS}$, such that the inclusion is
a homotopy equivalence.
We are defining $\partial Y(K_{f})^{\BS}=Y(K_{f})^{\BS}\setminus Y(K_{f})$.
In the case that $\bG$ is reductive and the centre of $\bG$ has rank zero over $k$,
the space $Y(K_{f})^{\BS}$ is the Borel--Serre compactification of $Y(K_{f})$
(see \cite{borel-serre} or \cite{Borel-Ji-book}).

In \S\ref{interpret2sec} (Theorem \ref{reinterpret2} and its Corollaries),
 we identify Emerton's space $\tH^{\bullet}(K^{\gp},\Z_{p})$
 with $H^{\bullet}(Y(K_{f}),\scC(K_{\gp}))$,
 where $\scC(K_{\gp})$ is the local system on $Y(K_{f})$ consisting of
 continuous functions $K_{\gp}\to\Z_{p}$.
The action of the fundamental group of a connected component of $Y(K_{f})$
on $\scC(K_{\gp})$ is by left translation on $K_{\gp}$.
The action of $K_{\gp}$ on these cohomology groups is given by
right translation on $\scC(K_{\gp})$.
This second interpretation has two technical advantages.
First, the space $Y(K_{f})$ is homotopic to a finite simplicial complex
 rather than a projective limit of finite simplicial complexes.
Second, the sheaf $\scC(G)$ is a local system, whereas $\scC$ is not.
In principle, this second interpretation allows us, given a triangulation
of $Y(K_{f})$, to calculate the spaces $\tH^{\bullet}_{\ast}$ as $K_{\gp}$-modules.

As an application, we give in \S\ref{spectralseqsec}
 a new proof of Emerton's spectral sequence (\ref{specseq}),
 starting from the definition
 $\tH^{\bullet}(K^{\gp},\Z_{p})=H^{\bullet}(Y(K_{f}),\scC(K_{\gp}))$.
We show that this is an example of a Grothendieck
 spectral sequence for (roughly speaking)
 the following composition of left-exact functors:
$$
	\begin{array}{c}
		\hbox{sheafs of continuous}\\
		\hbox{$K_{\gp}$-modules on $Y(K_{f})$}
	\end{array}
	\;\longrightarrow\;
	\begin{array}{c}
		\hbox{continuous}\\
		\hbox{$K_{\gp}$-modules}
	\end{array}
	\;\longrightarrow\;
	\begin{array}{c}
		\hbox{smooth}\\
		\hbox{$K_{\gp}$-modules.}
	\end{array}
$$
The first functor here is ``global sections'' and the second is $\Hom_{\gg}(W,-_{\la})$.
In fact our result is more general than Emerton's in the compactly supported case
(see Theorem 2.1.12 of \cite{emerton}).
Recall that Emerton required his spaces to be topological manifolds.
This is because he uses Poincar\'e duality to replace compactly supported
cohomology by homology in his proof.
We do not make this restriction.
The greater generality of our result (Theorem \ref{myspecseq})
 makes these techniques applicable (for example)
 to $S$-arithmetic quotients, or to subsets of the Borel--Serre boundary.
We also obtain a spectral sequence in the case that the finite dimensional
space $W$ is replaced by any continuous admissible representation.

In \S\ref{abstractsection} we calculate the spaces $\tH^{\bullet}_{\ast}$
 in some easy cases.
We show (Theorem \ref{unipotent})
that for a unipotent group $\bN$ over $\Q$, we have $\tH^{0}_{\ast}(\bN,\Z_{p})=\Z_{p}$
and $\tH^{n}_{\ast}(\bN,\Z_{p})=0$ for $n>0$.
We also show (Theorem \ref{torus} and Corollary \ref{leopoldt})
that a corresponding result for tori over $\Q$
is equivalent to Leopoldt's conjecture.
As a consequence, we show (Theorem \ref{parabolic})
 that if $\bN$ is the unipotent radical of a group $\bG/\Q$,
and $\bH=\bG/\bN$, then we may identify $\tH^{\bullet}_{\ast}(\bG,\Z_{p})$
with $\tH^{\bullet}_{\ast}(\bH,\Z_{p})$.

In \S\ref{borelserresection},
 we apply these results to the cohomology $\tH^{\bullet}_{\partial}$
 of the Borel--Serre boundary.
To describe these results, we need to introduce some notation.
Let $\bG$ be a reductive group over $\Q$, and assume that the maximal split torus
in the centre of $\bG$ is trivial.
Given any parabolic subgroup $\bP\subset \bG$ defined over $\Q$,
there is Levi decomposition $\bP=\bL\ltimes \bN$.
Here $\bN$ is the unipotent radical of $\bP$ and
$\bL$ is a Levi factor.
We further decompose $\bL=\bA\bM$,
 where $\bA$ is the maximal split torus in the centre of $\bL$
 and $\bM$ is the intersection of the kernels of the
 homomorphisms $\bL\to\GL_{1}/\Q$.
We define constants $\cd(\bG,\Z_{p})$, $\cd(\bG,\Q_{p})$,
 $\cd_{\cpct}(\bG,\Z_{p})$, $\cd_{\cpct}(\bG,\Q_{p})$,
 $\cd_{\partial}(\bG,\Z_{p})$, $\cd_{\partial}(\bG,\Q_{p})$,
 $D(\bG,\Z_{p})$ and  $D(\bG,\Q_{p})$ as follows:
\begin{eqnarray*}
	\cd_{\ast}(\bG,-)
	&=&
	\max\{ n :
	\tH^{n}_{\ast}(\bG,-) \ne 0
	\},\\
	D(\bG,-)
	&=&
	\max
	\left\{ \cd_{\cpct}(\bM,-)\; :
	\begin{array}{c}
		\hbox{$\bP=\bM\bA\bN \subset \bG$ is a}\\
		\hbox{proper parabolic subgroup}
	\end{array}
	\right\}.
\end{eqnarray*}
(The notation ``$\cd$'' stands for ``cohomological dimension''.)
We trivially have $\cd_{\ast}(\bG,\Q_{p})\le \cd_{\ast}(\bG,\Z_{p})$.
In fact these numbers are equal in all examples calculated by the author.
We shall prove (Theorem \ref{borelserreboundary}) that
$$
	\cd_{\partial}(\bG,-) \le D(\bG,-),
$$
where $-$ denotes either $\Z_{p}$ or $\Q_{p}$.
It seems rather likely that this bound is sharp; this will be clear from the proof.
As a consequence of the exact sequence (\ref{exactseq1})
we deduce that the map $\tH^{n}_{\cpct}(\bG,\Z_{p})\to \tH^{n}(\bG,\Z_{p})$ is
surjective for $n=D(\bG)+1$ and an isomorphism for $n\ge D(\bG)+2$.
This bound is rather strong: note that for groups $\bG/\Q$ of real rank zero,
 $D(\bG)=0$.
This is perhaps surprising, since the classical cohomology groups
$H^{n}_{\cl}$ and $H^{n}_{\cpct,\cl}$ may differ up to the dimension
of $Y(K_{f})$.

It is clear that $\cd(\bG,\Z_{p})$ is at most the virtual cohomological dimension
of arithmetic subgroups of $\bG$.
It is proved in \cite{borel-serre},
 that this virtual cohomological dimension is $\dim_{\R}(Y(K_{f}))-\rank_{\Q}(\bG)$.
We therefore have $\cd(\bG,\Z_{p})\le\dim_{\R}(Y(K_{f}))-\rank_{\Q}(\bG)$.
Using this, together with our previous result,
 we prove (Theorem \ref{vanishingtheorem})
 that
$$
	\cd_{\cpct}(\bG,\Z_{p})\le \dim_{\R}(Y(K_{f}))-\rank_{\Q}(\bG).
$$
This bound is certainly not sharp in general,
 although it may be sharp when $\bG$ is split.
For example, if $\bG$ has positive real rank, then
 we show (Proposition \ref{topdimension})
 that $\cd_{\cpct}(\bG,\Z_{p}),\cd(\bG,\Z_{p})\le \dim(Y(K_{f}))-1$,
 regardless of the rational rank.
If, in addition, the congruence kernel of $\bG$ is finite,
 then we show (Proposition \ref{nextdimensiondown})
 that $\cd(\bG,\Z_{p}),\cd_{\cpct}(\bG,\Z_{p})\le \dim(Y(K_{f}))-2$.
In this context, we recall that Serre conjectured (for $\bG$ simple and simply connected)
 that the congruence kernel is finite if and only if the real rank is at least $2$.
Based on this evidence, one might expect that in fact
 $\cd_{\cpct}(\bG,\Z_{p}),\cd(\bG,\Z_{p})\le \dim(Y(K_{f}))-\rank_{\R}(\bG)$.

Finally, we show (Proposition \ref{vanishingtheoremconj})
 that $\cd(\bG,-)\le \dim(Y(K_{f}))-\rank_{\R}(\bG)$ for all
reductive groups $\bG$ whose centre has rational rank $0$, if and only if
 the same bound also holds for $\cd_{\cpct}(\bG,-)$.
Again, this follows from our result on the cohomology of the boundary,
together with the long exact sequence (\ref{exactseq1}).

\section{Relation to Sheaf Cohomology}
\label{interpret1sec}

\subsection{Some facts about \v Cech cohomology}

Let $X$ be a topological space and $\cF$ a presheaf on $X$.
For an open cover $\gU=\{U_{i}: i\in I\}$ of $X$,
 one defines the \v Cech complex $\check C^{\bullet}(\gU,\cF)$ by
$$
	\check C^{n}(\gU,\cF)
	=
	\{(f_{i_{0},\ldots,i_{n}})_{i_{0},\ldots,i_{n}\in I^{n+1}} : 
	f_{i_{0},\ldots,i_{n}}\in \cF(U_{i_{0}}\cap \cdots \cap U_{i_{n}})\}.
$$
The cohomology groups of this complex are written $\check H^{\bullet}(\gU,\cF)$.
The \v Cech cohomology groups are defined to be
the direct limits of these cohomology groups:
$$
	\check H^{n}(X,\cF)
	=
	\lim_{\stackrel{\textstyle \to}{\scriptstyle \gU}} \check H^{n}(\gU,\cF).
$$
In fact $\check H^{n}(X,\cF)$ depends only on the sheafification of $\cF$.
An open cover $\gU$ of $X$ is called \emph{$\cF$-acyclic}
 (or sometimes a \emph{Leray cover}) if, for any intersection $U$
 of finitely many elements of $\gU$, we have $\check H^{n}(U,\cF)=0$ for all $n>0$.

\begin{theorem}[Leray's Theorem]
	If the cover $\gU$ is $\cF$-acyclic, then
	$\check H^{n}(X,\cF)=\check H^{n}(\gU,\cF)$.
\end{theorem}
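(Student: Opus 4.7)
The plan is to reduce to the case of a sheaf and then compare $\check H^n(\gU,\cF)$ and $\check H^n(X,\cF)$ via a Čech bicomplex. Since $\check H^n(X,\cF)$ depends only on the sheafification of $\cF$, I would first replace $\cF$ by its sheafification. Next, since any open cover of $X$ admits a common refinement with $\gU$, the covers $\gV$ refining $\gU$ are cofinal among all open covers, so
$$
	\check H^n(X,\cF)
	=
	\varinjlim_{\gV\ \text{refines}\ \gU}
	\check H^n(\gV,\cF).
$$
It therefore suffices to evaluate this direct limit in terms of $\check H^n(\gU,\cF)$.

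For each refinement $\gV=\{V_j\}_{j\in J}$ of $\gU$, I would form the Čech double complex
$$
	K^{p,q}_{\gV}
	=
	\prod_{\substack{i_0,\ldots,i_p\in I\\ j_0,\ldots,j_q\in J}}
	\cF\bigl(U_{i_0}\cap\cdots\cap U_{i_p}\cap V_{j_0}\cap\cdots\cap V_{j_q}\bigr),
$$
with its two natural Čech differentials, and analyze the two filtration spectral sequences converging to $H^{\bullet}(\mathrm{Tot}(K_{\gV}))$. Taking cohomology in the $p$-direction first, each row at fixed $(j_0,\ldots,j_q)$ is the Čech complex of $V_{j_0\cdots j_q}$ with respect to $\{U_i\cap V_{j_0\cdots j_q}\}_{i\in I}$; because $\gV$ refines $\gU$, some $U_{i_*}$ contains $V_{j_0}$, so this cover contains the whole space as a member and its Čech complex is contractible by the standard simplicial homotopy. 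This spectral sequence therefore collapses at $E_2$ and identifies $H^n(\mathrm{Tot}(K_{\gV}))$ with $\check H^n(\gV,\cF)$.

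Taking cohomology in the $q$-direction first, each column at fixed $(i_0,\ldots,i_p)$ is the Čech complex of $U_{i_0\cdots i_p}$ with respect to $\{V_j\cap U_{i_0\cdots i_p}\}_{j\in J}$. These column cohomologies $\check H^q(\{V_j\cap U_{i_0\cdots i_p}\}_j,\cF)$ need not vanish for a single $\gV$, but as $\gV$ varies the induced covers of $U_{i_0\cdots i_p}$ are cofinal among all open covers of $U_{i_0\cdots i_p}$. By the $\cF$-acyclicity of $\gU$ and exactness of direct limits of abelian groups, these column cohomologies vanish in the limit in positive $q$, and give $\cF(U_{i_0\cdots i_p})$ in degree zero. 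The column spectral sequence then collapses after limiting to yield $\varinjlim_{\gV}H^n(\mathrm{Tot}(K_{\gV}))=\check H^n(\gU,\cF)$. Combined with the previous identification and Step 1, this gives $\check H^n(X,\cF)=\check H^n(\gU,\cF)$.

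The main obstacle is this asymmetry: one spectral sequence collapses for each individual refinement $\gV$, while the other only collapses after passing to the direct limit. The key technical point to justify is that direct limits of abelian groups commute with the formation of the $E_2$ page of a spectral sequence of bicomplexes, so that limiting in $\gV$ and extracting cohomology may be interchanged.
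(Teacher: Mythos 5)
The paper states Leray's theorem as a known fact without a proof of its own, so there is nothing to compare your argument against; I can only assess it on its merits. Your overall strategy---the \v Cech bicomplex $K^{\bullet,\bullet}_{\gV}$, its two spectral sequences, and the direct limit over refinements $\gV$ of $\gU$---is the standard one, and the row spectral sequence step (the row complex is contractible because some $U_{i_*}$ contains $V_{j_0\cdots j_q}$, so $V_{j_0\cdots j_q}$ itself is a cover member) is correct. The genuine gap is the very first reduction: replacing $\cF$ by its sheafification $\cF^{+}$ changes the right-hand side of the identity. It is true that $\check H^{n}(X,\cF)=\check H^{n}(X,\cF^{+})$, but the fixed-cover group $\check H^{n}(\gU,\cF)$ is computed from the actual presheaf values on the finite intersections $U_{i_0\cdots i_p}$, and the natural map $\check H^{n}(\gU,\cF)\to\check H^{n}(\gU,\cF^{+})$ need not be an isomorphism; the acyclicity hypothesis controls only $\check H^{q}(U_{i_0\cdots i_p},\cF)$ for $q>0$ and says nothing about whether $\cF\to\cF^{+}$ is an isomorphism on those opens. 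Your argument therefore establishes $\check H^{n}(X,\cF)=\check H^{n}(\gU,\cF^{+})$, which is a different statement. This is not pedantry in the present paper: Leray's theorem is applied to $\cF^{Z}$ and $(\uZps)^{Z_r}$, which are genuinely not sheaves, so the reduction you propose cannot be used here. If one runs the bicomplex argument with $\cF$ itself instead, the column $E_1$ page in degree $q=0$ produces (in the limit over $\gV$) the groups $\check H^{0}(U_{i_0\cdots i_p},\cF)$ rather than $\cF(U_{i_0\cdots i_p})$, and these differ for a general presheaf; some degree-$0$ condition must be added or verified for the argument to close.

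A secondary issue is the exchange of $\varinjlim_{\gV}$ with the column $E_1$ page, which is a product $\prod_{(i_0,\ldots,i_p)}\check H^{q}\bigl(\{V_j\cap U_{i_0\cdots i_p}\}_j,\cF\bigr)$ over all $(p{+}1)$-tuples from the index set $I$. Direct limits of abelian groups do not commute with infinite products (even if each factor's limit vanishes, the limit of the product can be nonzero), so the limiting degeneration you describe is automatic only when $\gU$ is finite---which is the case in the paper's applications, but is not assumed in the statement. Your closing remark about commuting direct limits with the formation of the spectral sequence is otherwise fine, since filtered colimits are exact and respect the column filtration, and so pass through each page once the $E_1$ identification is secured.
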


\begin{theorem}[Thm. III.4.12 of \cite{bredon}]
	If $\cF$ is a sheaf on a paracompact Hausdorff
	topological space $X$, then the \v Cech cohomology groups
	of $\cF$ are equal to its sheaf cohomology groups,
	i.e. the derived functors of the global sections functor.
\end{theorem}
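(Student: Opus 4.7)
My plan is to exhibit both $\check H^{\bullet}(X,-)$ and $H^{\bullet}(X,-)$ as universal cohomological $\delta$-functors on the category of sheaves of abelian groups on $X$, agreeing in degree $0$, and then invoke the standard uniqueness theorem for such $\delta$-functors. There is a canonical natural transformation $\check H^{n}(X,\cF)\to H^{n}(X,\cF)$ constructed, for instance, by choosing an injective resolution $\cF\to\cI^{\bullet}$ and comparing the \v Cech double complex $\check C^{\bullet}(\gU,\cI^{\bullet})$ with its rows and columns; in degree $0$ both sides equal $\cF(X)$, and this map will serve as the comparison.

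The first substantive step is to produce the long exact \v Cech sequence attached to a short exact sequence $0\to\cF'\to\cF\to\cF''\to 0$ of sheaves. The obstruction is that, at the level of a fixed cover $\gU$, the sequence $0\to\check C^{\bullet}(\gU,\cF')\to\check C^{\bullet}(\gU,\cF)\to\check C^{\bullet}(\gU,\cF'')\to 0$ need not be exact on the right, since sections of $\cF''$ lift to $\cF$ only locally. Here paracompactness and the Hausdorff property enter decisively: given a cocycle in $\check C^{n}(\gU,\cF'')$, I can cover $X$ by opens on which each of its values lifts to $\cF$, then pass to a common refinement and shrink using a locally finite partition subordinate refinement (the standard shrinking lemma for paracompact Hausdorff spaces) to produce a refined cover on which the lift exists simultaneously. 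Taking the direct limit over covers yields the desired long exact sequence. This step is where essentially all of the analytic content lives, and it is the main technical obstacle.

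The second step is to verify that $\check H^{n}(X,\cI)=0$ for $n>0$ when $\cI$ is injective (equivalently, it suffices to do this for flabby sheaves, since injectives are flabby and every sheaf embeds in a flabby one). For a flabby $\cI$, one shows directly that any \v Cech cocycle on any cover can be trivialized after passing to a refinement, again using paracompactness to reorganize local lifts into a global cochain. Combined with the long exact sequence from the previous step and a dimension-shifting argument, this yields $\check H^{n}(X,\cF)=0$ for all $n>0$ whenever $\cF$ is flabby.

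With these two ingredients in hand, the proof concludes formally. Given any $\cF$, embed it in a flabby sheaf $\cI$ with cokernel $\cG$; both theories produce long exact sequences, and both vanish on $\cI$ in positive degrees, so the connecting homomorphisms identify $\check H^{n+1}(X,\cF)$ and $H^{n+1}(X,\cF)$ with $\check H^{n}(X,\cG)$ and $H^{n}(X,\cG)$ respectively, in a manner compatible with the comparison map. Induction on $n$, starting from the degree-$0$ case where both sides equal $\cF(X)$, shows that the comparison map is an isomorphism in every degree. As an alternative route, one could instead build the Cartan--Leray spectral sequence $\check H^{p}(X,\cH^{q}(\cF))\Rightarrow H^{p+q}(X,\cF)$ and show that the presheaves $\cH^{q}(\cF): U\mapsto H^{q}(U,\cF|_{U})$ have zero sheafification for $q>0$ (which is just the definition of sheaf cohomology), collapsing the spectral sequence onto the \v Cech row; but the $\delta$-functor approach seems cleaner and more in the spirit of the rest of the paper.
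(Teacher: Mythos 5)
Your argument is correct and follows one of the two standard textbook routes to this classical result (the universal $\delta$-functor comparison, with the Čech-to-derived-functor spectral sequence as the alternative you sketch at the end). The paper itself does not prove this statement — it is cited directly as Theorem~III.4.12 of Bredon's \emph{Sheaf Theory} — so there is no in-paper argument to compare against.

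Two small remarks that might sharpen the writeup. First, in your second step the detour through flabby sheaves is actually harder than necessary: if $\cI$ is injective, then the canonical monomorphism $\cI\hookrightarrow C^{0}(\cI)$ into the Godement sheaf of discontinuous sections splits, and $\check H^{n}(\gU,C^{0}(\cI))=0$ for all $n>0$ and all covers $\gU$ by an explicit contracting homotopy (choose, for each $x$, an index $i(x)$ with $x\in U_{i(x)}$, and set $(h\sigma)_{i_{0}\cdots i_{n}}(x)=\sigma_{i(x)i_{0}\cdots i_{n}}(x)$). Thus injectives are $\check H$-acyclic on \emph{any} space, with no appeal to paracompactness; the analytic content of the theorem is concentrated entirely in your first step, the existence of the long exact Čech sequence. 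Second, when you say that the vanishing of the sheafification of $U\mapsto H^{q}(U,\cF|_{U})$ for $q>0$ "is just the definition of sheaf cohomology," it would be cleaner to say it is a direct consequence: a cocycle in an injective (hence flabby) resolution becomes a coboundary at each stalk because the stalk complex is exact in positive degrees, and hence becomes a coboundary on a neighbourhood of each point.
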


Given a presheaf $\cF$ on a topological space $Y$,
and a subspace $Z\subset Y$, we define presheafs $\cF_{Z}$ and $\cF^{Z}$
 on $X$ by
$$
	\cF_{Z}(U)
	=
	\begin{cases}
		\cF(U) & \hbox{if } U\cap Z\ne \emptyset\\
		0 & \hbox{otherwise,}
	\end{cases}
	\qquad\quad
	\cF^{Z}(U)
	=
	\begin{cases}
		0 & \hbox{if } U\cap Z\ne \emptyset\\
		\cF(U) & \hbox{otherwise.}
	\end{cases}
$$
It turns out that $\check H^{\bullet}(Z,\cF)=\check H^{\bullet}(X,\cF_{Z})$,
and one defines $\check H^{\bullet}(Y,Z,\cF)=\check H^{\bullet}(Y,\cF^{Z})$.
There is a short exact sequence of presheafs:
$$
	0
	\to
	\cF^{A}
	\to
	\cF
	\to
	\cF_{A}
	\to
	0.
$$
This gives a long exact sequence:
$$
	\check H^{n}(\gU,\cF^{A}) \to
	\check H^{n}(\gU,\cF) \to
	\check H^{n}(\gU,\cF_{A}) \to
	\check H^{n+1}(\gU,\cF^{A}).
$$
Passing to the direct limit, we obtain
 the long exact sequence of \v Cech cohomology groups:
$$
	\check H^{n}(X,A,\cF) \to
	\check H^{n}(X,\cF) \to
	\check H^{n}(A,\cF) \to
	\check H^{n+1}(X,A,\cF).
$$
If $A$ is an abelian group, then we shall write
$\underline A$ for the sheaf of locally constant $A$-valued functions.
Using Leray's theorem, one easily proves the following:

\begin{theorem}[Comparison Theorem]
	\label{comparisontheorem}
	Let $Y$ be a finite simplicial complex,  and $Z\subset Y$
	a subcomplex.
	For any abelian group $A$,
	 we have $\check H^{\bullet}(Y,Z, \underline A)=H^{\bullet}(Y,Z,A)$,
	where the right hand side is singular cohomology.
\end{theorem}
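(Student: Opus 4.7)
The plan is to use Leray's theorem with a geometrically natural acyclic open cover of $Y$ coming from the triangulation, and then identify the resulting Čech complex with the classical (relative) simplicial cochain complex of the pair $(Y,Z)$.

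Let $\mathfrak{U} = \{\mathrm{st}(v) : v \text{ a vertex of } Y\}$, where $\mathrm{st}(v)$ is the open star of $v$, i.e.\ the union of the open simplices of $Y$ having $v$ as a vertex. Two combinatorial facts drive everything:
(i) for any tuple of vertices $v_0,\ldots,v_n$, the intersection $\mathrm{st}(v_0)\cap\cdots\cap\mathrm{st}(v_n)$ is empty unless $\{v_0,\ldots,v_n\}$ spans a simplex $\sigma$ of $Y$, in which case it coincides with the open star of $\sigma$ and is contractible;
(ii) since $Z$ is a subcomplex, this intersection meets $Z$ if and only if $\{v_0,\ldots,v_n\}$ spans a simplex of $Z$.

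First I would verify that $\mathfrak{U}$ is $\underline{A}$-acyclic. Every nonempty finite intersection is contractible and paracompact Hausdorff, and on such a space the constant sheaf $\underline{A}$ has vanishing higher sheaf cohomology (by homotopy invariance for constant coefficients), so by the quoted theorem of Bredon it has vanishing higher Čech cohomology. The same argument applies to the auxiliary presheaves $\underline{A}^Z$ on the intersections where they are nonzero. Hence Leray's theorem gives
\[
    \check H^n(Y, \underline{A}) \;=\; \check H^n(\mathfrak{U}, \underline{A}),
    \qquad
    \check H^n(Y, Z, \underline{A}) \;=\; \check H^n(\mathfrak{U}, \underline{A}^{Z}).
\]

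Next I would identify these Čech complexes with the ordered simplicial cochain complexes of $Y$ and of the pair $(Y,Z)$. By (i), a tuple $(v_0,\ldots,v_n)$ contributes to $\check C^n(\mathfrak{U}, \underline{A})$ exactly when it spans a simplex of $Y$; since the corresponding intersection is then connected, its space of sections is just $A$. So $\check C^\bullet(\mathfrak{U}, \underline{A})$ is the ordered simplicial cochain complex $C^\bullet_{\mathrm{simp}}(Y;A)$, and the Čech differential is the simplicial one by direct inspection. For the relative version, (ii) says that $\underline{A}^Z$ vanishes on intersections indexed by tuples spanning a simplex of $Z$, so only tuples spanning a simplex of $Y$ not contained in $Z$ contribute: we recover $C^\bullet_{\mathrm{simp}}(Y,Z;A)$. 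Since $Y$ is a finite simplicial complex and $Z$ a subcomplex, simplicial and singular cohomology of $(Y,Z)$ agree, which finishes the proof.

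The main technical point I expect to require care is the acyclicity of the cover --- specifically, pinning down that on a contractible open subset of a simplicial complex the constant sheaf really has vanishing higher sheaf cohomology. The cleanest route is to invoke homotopy invariance of sheaf cohomology for constant coefficients on paracompact Hausdorff spaces and then apply Bredon's theorem; an alternative, if one prefers to stay inside the Čech formalism, is to exhibit an explicit cone/chain homotopy on the Čech complex of a star-shaped refinement and pass to the limit.
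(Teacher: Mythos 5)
Your proof is correct and takes exactly the route the paper has in mind: the paper's own ``proof'' is the one-line remark that the statement follows from Leray's theorem, and the open-star cover is precisely the standard acyclic cover that makes this work, with the identification of the resulting \v Cech complex with the (ordered, unnormalized) simplicial cochain complex of the pair finishing the argument. The one small point to keep in the back of your mind is that tuples with repeated vertices also contribute to $\check C^{\bullet}(\gU,\underline A)$, so what you get is the unnormalized ordered cochain complex, but the passage from that to the usual simplicial cochain complex is a routine normalization.
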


In fact the comparison theorem holds for much more general topological
spaces (see for example \cite{spanier}).

\begin{theorem}[Lem. 6.6.11, Cor 6.1.11 and Cor. 6.9.9 of \cite{spanier}]
	\label{spantheorem}
	Let $Y$ be a finite simplicial complex and
	$Z$ a subcomplex.
	For any abelian group $A$,
	 we have $H^{\bullet}_{\cpct}(Y\setminus Z,A)=H^{\bullet}(Y,Z,A)$.
\end{theorem}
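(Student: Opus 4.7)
The plan is to interpret both sides sheaf-theoretically and then exploit the fact that $Y$ is compact. Write $j\colon Y\setminus Z\hookrightarrow Y$ for the open inclusion and $i\colon Z\hookrightarrow Y$ for the complementary closed inclusion.

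First I would identify the sheafification of the presheaf $\underline{A}^{Z}$ appearing in the definition of $\check H^{\bullet}(Y,Z,\cF)$. Since $Z$ is closed in $Y$, any point $y\notin Z$ has an open neighbourhood $U$ disjoint from $Z$, so that $\underline{A}^{Z}|_{U}=\underline{A}|_{U}$; conversely every open set containing a point $y\in Z$ meets $Z$, so that the stalk $(\underline{A}^{Z})_{y}$ vanishes. Hence the sheafification of $\underline{A}^{Z}$ is precisely $j_{!}(\underline{A}|_{Y\setminus Z})$, the extension by zero. Since $Y$ is paracompact Hausdorff, Theorem III.4.12 of \cite{bredon} gives
$$
\check H^{n}(Y,Z,\underline{A}) \;=\; \check H^{n}(Y,\underline{A}^{Z}) \;=\; H^{n}\bigl(Y,\,j_{!}(\underline{A}|_{Y\setminus Z})\bigr).
$$

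Next I would invoke the standard identification of compactly supported cohomology with the sheaf cohomology of an extension by zero on a compact ambient space: for any sheaf $\cF$ on $Y\setminus Z$,
$$
H^{n}_{\cpct}(Y\setminus Z,\cF) \;=\; H^{n}(Y,\,j_{!}\cF).
$$
Specialising to $\cF=\underline{A}|_{Y\setminus Z}$, and combining with the Comparison Theorem \ref{comparisontheorem} to reinterpret the \v Cech relative cohomology as singular relative cohomology, yields the claim.

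The main substantive step is the identity $H^{n}_{\cpct}(U,\cF)=H^{n}(Y,j_{!}\cF)$ for $U$ open in a compact Hausdorff space $Y$. One proves it by taking a $c$-soft resolution $\cF\to\cI^{\bullet}$ on $U$; since $j_{!}$ is exact, $j_{!}\cF\to j_{!}\cI^{\bullet}$ is a $c$-soft resolution on $Y$, and the global sections of $j_{!}\cI^{k}$ on the compact space $Y$ coincide tautologically with the compactly supported sections of $\cI^{k}$ on $U$. This is where compactness of $Y$ is used in an essential way; the remainder of the argument is formal sheaf bookkeeping together with the \v Cech-to-sheaf comparison already invoked above. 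A five-lemma comparison of the sheaf long exact sequence for $0\to j_{!}\underline{A}|_{Y\setminus Z}\to\underline{A}\to i_{*}\underline{A}|_{Z}\to 0$ with the topological long exact sequence of the pair $(Y,Z)$ provides an alternative, essentially equivalent route.
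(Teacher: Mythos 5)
The paper offers no proof of this statement: it is cited as a black box from Spanier, whose route passes through Alexander--Spanier cohomology with compact supports, tautness of subcomplexes, and the comparison between Alexander--Spanier and singular cohomology. Your argument instead goes through sheaf theory --- identifying the sheafification of $\underline{A}^{Z}$ with the extension-by-zero $j_{!}(\underline{A}|_{Y\setminus Z})$, invoking Bredon's comparison of \v Cech and sheaf cohomology on a paracompact space, and then using the standard soft-resolution lemma $H^{n}(Y,j_{!}\cF)=H^{n}_{\cpct}(Y\setminus Z,\cF)$ for $Y$ compact. Both routes are correct and standard; yours has the advantage of staying entirely inside the \v Cech/Bredon toolkit that \S2 of the paper has already set up, so the proof reads as a natural continuation of the preceding paragraphs rather than an appeal to a different cohomology theory. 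The only step you pass over in silence is the comparison between the \emph{sheaf-theoretic} compactly supported cohomology $H^{n}_{\cpct}(Y\setminus Z,\underline{A})$ produced by the soft-resolution argument and the singular/simplicial compactly supported cohomology that Emerton's formalism uses; since $Y\setminus Z$ is an open subspace of a finite simplicial complex, hence locally contractible and $\sigma$-compact, all the usual definitions of $H^{\bullet}_{\cpct}$ agree, but it would be worth saying this explicitly, on the same footing as your appeal to the Comparison Theorem~\ref{comparisontheorem}.
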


\subsection{Emerton's Formalism}
\label{emerton-formal}
Emerton used the following formalism to introduce the
groups $\tH^{n}_{\ast}$.
Let $G$ be a compact, $\Q_{p}$-analytic group,
and fix a basis of open, normal subgroups:
$$
	G
	=
	G_{0}
	\supset
	G_{1}
	\supset
	\ldots.
$$
Suppose we have a sequence of simplicial maps between
finite simplicial complexes
$$
	\cdots \to Y_{2}
	\to Y_{1}
	\to Y_{0},
$$
and subcomplexes:
$$
	\cdots \to Z_{2} \to Z_{1} \to Z_{0},
$$
each equipped with a right action of $G$, and satisfying the following
conditions:
\begin{enumerate}
	\item
	the maps in the sequence are $G$-equivariant;
	\item
	$G_{r}$ acts trivially on $Y_{r}$;
	\item
	if $0\le r' \le r$ then the maps $Y_{r}\to Y_{r'}$ and $Z_{r}\to Z_{r'}$
	are Galois covering maps with deck transformations
	provided by the natural action of $G_{r'}/G_{r}$ on $Y_{r}$.
\end{enumerate}
Given this data, we let $Y$ be the projective limit of the spaces $Y_{r}$,
 and $Z$ be the projective limit of the spaces $Z_{r}$.
We shall use the notation $Y^{\circ}=Y\setminus Z$,
 $Y_{i}^{\circ}=Y_{i}\setminus Z_{i}$.
Emerton defined the following spaces:
$$
	\tH^{n}_{\cpct}(Y^{\circ},\Z_{p})
	=
	\limprojs
	\limdr
	H^{n}_{\cpct}(Y_{r}^{\circ},\Zps),
	\quad
	\tH^{n}_{\cpct}(Y^{\circ},\Q_{p})
	=
	\tH^{n}_{\cpct}(Y^{\circ},\Z_{p})
	\otimes_{\Z_{p}}
	\Q_{p}.
$$
$$
	\tH^{n}(Y,\Z_{p})
	=
	\limprojs
	\limdr
	H^{n}(Y_{r},\Zps),
	\quad
	\tH^{n}(Y,\Q_{p})
	=
	\tH^{n}(Y,\Z_{p})
	\otimes_{\Z_{p}}
	\Q_{p}.
$$
These spaces are $G$-modules, where $G$ acts via its action on the spaces $Y_{r}$.
In applications, $G$ will be a compact open subgroup of $\bG(k_{\gp})$,
and the space $Y$ will be either $Y(K^{\gp})^{\BS}$
or $\partial Y^{\BS}$. If $Y=Y(K^{\gp})^{\BS}$ then we sometimes take
$Z=\partial Y(K^{\gp})^{\BS}$, and sometimes $Z$ is the empty set.
We shall write $\scC$ for the sheaf on $Y$ of continuous $\Z_{p}$-valued
functions.

\begin{theorem}
	\label{reinterpret1}
	With the above notation,
	there is a canonical isomorphism of $G$-modules:
	$$
		\tH^{n}_{\cpct}(Y^{\circ},\Z_{p})=\check H^{n}(Y, Z, \scC).
	$$
\end{theorem}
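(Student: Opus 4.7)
The plan is to compute both sides as the cohomology of one and the same \v Cech complex, by successively identifying coefficients at each level. First, at each finite level, Theorem~\ref{spantheorem} gives $H^n_{\cpct}(Y_r^{\circ},\Z/p^s)\cong H^n(Y_r,Z_r,\Z/p^s)$, and the Comparison Theorem~\ref{comparisontheorem} then identifies this with $\check H^n(Y_r,Z_r,\underline{\Z/p^s})$. Consequently
$$\tH^n_{\cpct}(Y^{\circ},\Z_p)\;=\;\limp{s}\limd{r}\check H^n(Y_r,Z_r,\underline{\Z/p^s}).$$

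Next, choose a triangulation of $Y_0$ whose open-star cover $\gU_0=\{U_v\}$ is \emph{good}, i.e.\ every non-empty finite intersection is contractible. Pull back to covers $\gU_r$ of $Y_r$---each finite intersection then being a disjoint union of contractibles, by the covering-map property of $Y_{r'}\to Y_r$ in the axioms of \S\ref{emerton-formal}---and to an open cover $\gU$ of $Y$. Constant sheaves and their support-vanishing versions are $\gU_r$-acyclic on contractible opens, so Leray's theorem identifies the group $\check H^n(Y_r,Z_r,\underline{\Z/p^s})$ with $H^n$ of the \v Cech complex $\check C^\bullet(\gU_r,(\underline{\Z/p^s})^{Z_r})$; the analogous argument on $Y$ shows $\check H^n(Y,Z,\scC)=H^n(\check C^\bullet(\gU,\scC^Z))$.

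The essential computation is the cochain-level identification: for each finite intersection $V$ from $\gU$ with images $V_r\subset Y_r$,
$$\scC^Z(V)\;=\;\limp{s}\limd{r}(\underline{\Z/p^s})^{Z_r}(V_r).$$
Both sides vanish when $V$ meets $Z$; otherwise $V_r$ misses $Z_r$ for $r$ large, and using that $V=\lim V_r$ as topological spaces, together with precompactness of the finite-level intersections, any continuous map from $V$ to the discrete group $\Z/p^s$ is locally constant and factors through some $V_r$ as a locally constant function. Taking $\limp{s}$ then recovers $\scC(V)=C(V,\Z_p)$.

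Finally, commute the limits through $H^n$. Direct limits are exact and pass through cohomology automatically. For the inverse limit, reduction modulo $p^{s-1}$ induces surjections $\check C^n(\gU_r,\underline{\Z/p^s})\twoheadrightarrow\check C^n(\gU_r,\underline{\Z/p^{s-1}})$ that remain surjective after $\limd{r}$, so the inverse system of cochain complexes satisfies Mittag--Leffler, $\lim^1$ vanishes, and $\limp{s}$ also commutes with $H^n$. Assembling the pieces yields the claimed isomorphism, and $G$-equivariance is automatic since every construction---the triangulation pulls back $G$-equivariantly, the sheaves carry natural $G$-actions, and both limits are taken in the category of $G$-modules---is $G$-equivariant. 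The main obstacle is the cochain-level matching in the third paragraph: one must choose the cover carefully so that continuous $\Z_p$-valued functions on each intersection in $Y$ genuinely factor through finite quotients, and that $\gU$ really is $\scC^Z$-acyclic on $Y$ (not just $\underline{\Z/p^s}$-acyclic at finite level); once these sheaf-theoretic points are settled, the rest is bookkeeping of limits.
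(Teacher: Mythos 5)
Your high-level strategy matches the paper's---choose a good cover, pull back, apply Leray, identify cochain groups, and pass to limits---but there are two genuine gaps, one of which is a substantive error rather than an omission.

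The first gap you acknowledge yourself: you defer the $\scC^Z$-acyclicity of the cover $\gU$ on the profinite space $Y$, calling it ``the main obstacle'' and leaving it unproved. This is not a routine adaptation. The paper devotes two entire subsections to it: given an arbitrary class $\sigma\in\check H^n(\tilde\gV,\scC)$ on a contractible piece $\tilde U=U\times G$, one must refine $\tilde\gV$ to a pullback of a cover at a finite level $U^{(r)}$, observe that on this level $\scC$ restricts to a \emph{constant} sheaf $S$ with values in $\scC(G_r)$, use that $U^{(r)}$ is homotopic to a finite set so that $\check H^{>0}(U^{(r)},S)=0$, and then pull the killing refinement back up to $\tilde U$. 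Since $\scC$ is not itself constant or locally constant on $Y$, the ``analogous argument'' you gesture at is not analogous to the finite-level case; this is the part of the proof that genuinely uses the structure of the projective limit.

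The second gap is a flaw in the $\limp{s}$ commutation. You invoke Mittag--Leffler on the cochain complexes and conclude that $\lim^1$ vanishes, hence $\limp{s}$ commutes with $H^n$. This is not what the theorem says. Surjectivity of transition maps on cochains (which is indeed automatic for reduction mod $p$) gives the exact sequence
$$
	0\to {\textstyle\limprojsd} H^{n-1}(C_s^\bullet)\to H^n\bigl({\textstyle\limprojs}\,C_s^\bullet\bigr)\to{\textstyle\limprojs}\,H^n(C_s^\bullet)\to 0,
$$
and the obstruction to commutation is $\limprojsd$ of the $(n-1)$\emph{-st cohomology} groups, not of the cochains. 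After taking $\limdr$, the groups $H^{n-1}\bigl(\limdr\check C^\bullet(\gU_r,\underline{\Z/p^s}^{Z_r})\bigr)$ are infinite smooth $\Z/p^s[G]$-modules, the reduction maps between them need not be surjective (there can be $p$-torsion in integral cohomology at each finite level), and finiteness is unavailable; so $\limprojsd H^{n-1}$ is not automatically zero. The paper sidesteps this entirely: it observes that $\check C^n(\tilde\gU,\scC^Z)\cong\scC(G)^N$ is an \emph{admissible} $\Z_p[G]$-module in Emerton's sense, and then invokes Emerton's Proposition~1.2.12, which is precisely the statement that for a complex of admissible modules, cohomology commutes with $\limprojs$ of reductions mod $p^s$. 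Your proposal never uses admissibility and so lacks the ingredient needed to actually kill the $\limprojsd$ term.
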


\begin{proof}
To prove the theorem,
 we construct acyclic covers of $Y$ and $Y_{r}$ and apply Leray's Theorem.
The proof will be broken up into manageable pieces.

\subsection{A cover}

We first choose a finite open cover $\gU$
 of $Y_{0}$ with the following properties:
\begin{enumerate}
	\item
	If $U$ is an intersection of finitely many sets in $\gU$
	then either $U$ is empty or $U$ is contractible.
	\item
	If $U$ is an intersection of finitely many sets in $\gU$
	and $U\cap Z_{0}$ is non-empty,
	then $U\cap Z_{0}$ is a deformation retract of $U$.
\end{enumerate}
For each $U\in \gU$, we let $U^{(r)}$ be the preimage
of $U$ in $Y_{r}$.
The sets $U^{(r)}$ form an open cover $\gU^{(r)}$ of $Y_{r}$,
and have the following properties:
\begin{enumerate}
	\item
	For every $U_{1}^{(r)},\ldots,U_{s}^{(r)}\in \gU^{(r)}$
	with non-empty intersection, the intersection
	$U_{1}^{(r)}\cap \cdots \cap U_{s}^{(r)}$
	is isomorphic as a topological $G$-set to
	$(U_{1}\cap \cdots \cap U_{r})\times (G/G_{r})$.
	In particular, the intersection is homotopic to a
	finite set.
	\item
	If $U_{1}^{(r)},\ldots,U_{s}^{(r)}\in \gU^{(r)}$
	and $U_{1}^{(r)}\cap \cdots \cap U_{s}^{(r)}\cap Z_{r}$ is
	non-empty,
	then	$U_{1}^{(r)}\cap \cdots \cap U_{s}^{(r)}\cap Z_{r}$
	is a deformation retract of $U_{1}^{(r)}\cap \cdots \cap U_{s}^{(r)}$.
\end{enumerate}
Furthermore, for each set $U\in \gU$, we define $\tilde U$
to be the preimage of $U$ in $Y$.
The sets $\tilde U$ form an open cover $\tilde\gU$ of $Y$.
We immediately verify the following:
\begin{enumerate}
	\item
	if $\tilde U_{1},\ldots,\tilde U_{s}\in\tilde \gU$ have non-empty intersection,
	then their intersection is equivalent as a topological $G$-set
	to $(U_{1}\cap \cdots\cap U_{s})\times G$.
	\item
	if $\tilde U_{1},\ldots,\tilde U_{s}\in\tilde \gU$ and
	$\tilde U_{1} \cap\cdots \cap\tilde U_{s}\cap Z$ is non-empty,
	then $\tilde U_{1} \cap\cdots \cap\tilde U_{s}\cap Z$ is a deformation retract
	of $\tilde U_{1} \cap\cdots \cap\tilde U_{s}$.
\end{enumerate}

\subsection{$\gU^{(r)}$ is $(\uZps)^{Z_{r}}$-acyclic}

Let $U$ be an intersection of finitely many sets in $\gU$,
and let $U^{(r)}$ be the preimage of $U$ in $Y_{r}$.
We know that $U$ is contractible, and $U^{(r)}=U\times (G/G_{r})$.
The sheaf $(\uZps)^{Z_{r}}$ on $Y_{r}$ consists of locally constant
$\Zps$-valued functions, which vanish on $Z_{r}$.
It follows that $\check H^{\bullet}(U^{(r)},(\uZps)^{Z_{r}})$ is a direct sum of finitely
many copies of $\check H^{\bullet}(U,(\uZps)^{Z_{0}})$.
We must therefore show that $\check H^{n}(U,(\uZps)^{Z_{0}})=0$ for all $n>0$.

If $U\cap Z_{0}$ is empty, then we have
$\check H^{n}(U,(\uZps)^{Z_{0}})=\check H^{n}(U,\uZps)$.
By the comparison theorem, this is the same as singular cohomology, and therefore
only depends on $U$ up to homotopy.
Since $U$ is contractible, it follows that $\check H^{n}(U,\uZps)=0$ for $n>0$.

Suppose instead that $U\cap Z_{0}$ is non-empty.
In this case, we know that $U\cap Z_{0}$ is a deformation retract of $U$.
It follows that the restriction map
$H^{\bullet}_{\sing}(U,\Zps) \to H^{\bullet}_{\sing}(U\cap Z_{0},\Zps)$ is an isomorphism.
By the comparison theorem, it follows that the map
$\check H^{\bullet}(U,\uZps)
 \to \check H^{\bullet}(U\cap Z_{0},\uZps)$ is an isomorphism.
The long exact sequence shows that $\check H^{\bullet}(U,U\cap Z_{0},\uZps)=0$.

\subsection{$\tilde \gU$ is $\scC$-acyclic}

Let $U$ be an intersection of finitely many sets in $\gU$,
and let $\tilde U$ be the preimage of $U$ in $Y$.
We know that $U$ is contractible, and $\tilde U=U\times G$.
We must show that $\check H^{n}(\tilde U,\scC)=0$ for $n>0$.

Let $\tilde \gV$ be an open cover of $\tilde U$, and choose an element
 $\sigma\in \check H^{n}(\tilde \gV,\scC)$ with $n>0$.
We shall find a refinement $\tilde \gW$ of $\tilde \gV$, such that
 the image of $\sigma$ in $\check H^{\bullet}(\tilde \gW,\scC)$ is zero.
By passing to a refinement of $\tilde \gV$ if necessary, we may assume that
$\tilde\gV$ is finite, and that each element of $\tilde \gV$
 is of the form $V_{i}\times H_{i}$
 for some open subset $V_{i}\subset U$ and some open coset $H_{i}\subset G$.
By refining still further, we may assume that the cosets $H_{i}$ are all cosets of
the same open subgroup $G_{r}\subset G$.
This means that $\tilde \gV$ is the pullback
 of an open cover $\gV^{(r)}$ of $U^{(r)}$.
For an open subset $V^{(r)}\subset U^{(r)}$, we have
$$
	\scC(\tilde V^{(r)})
	=
	S(V^{(r)}),
$$
where $S$ is the constant sheaf on $U^{(r)}$
with values in $\scC(G_{r})$
and $\tilde V^{(r)}$ is the preimage of $V^{(r)}$ in $\tilde U$.
It follows that
$$
	\check H^{\bullet}(\tilde\gV, \scC)
	=
	\check H^{\bullet}(\gV^{(r)}, S).
$$
Since $U^{(r)}$ is homotopic to a finite set and $S$ is a constant sheaf,
 it follows that $\check H^{>0}(U^{(r)}, S)$ is zero.
This implies there is a refinement $\gW^{(r)}$ of $\gV^{(r)}$, such that
 the image of $\sigma$ in $\check H^{\bullet}(\gW^{(r)}, S)$ is zero.
Pulling $\gW^{(r)}$ back to $\tilde U$,
 we have a refinement $\tilde \gW$ of $\tilde\gV$,
 such that the image of $\sigma$ in $\check H^{\bullet}(\tilde\gW,\scC)$
 is zero.

\subsection{$\tilde \gU$ is $\scC^{Z}$-acyclic}

Let $U$ be an intersection of finitely many sets in $\gU$,
and let $\tilde U$ be the preimage of $U$ in $Y$.
We know that $U$ is contractible, and $\tilde U=U\times G$.
We must show that $\check H^{n}(\tilde U,\tilde U\cap Z,\scC)=0$ for $n>0$.
If $U$ does not intersect $Z_{0}$,
 then this follows from the previous part of the proof.
We therefore assume that $U$ intersects $Z_{0}$.
In this case, we know that $U\cap Z_{0}$ is a deformation retract of $U$.
In particular, $U\cap Z_{0}$ is contractible, and
$\tilde U\cap Z= (U\cap Z_{0})\times G$.
The previous part of the proof shows that $\check H^{>0}(\tilde U, \scC)=0$
and $\check H^{>0}(\tilde U \cap Z,\scC)=0$.
Furthermore, one sees immediately that the restriction map
 $\check H^{0}(\tilde U,\scC)\to \check H^{0}(\tilde U \cap Z,\scC)$
 is an isomorphism.
Hence by the long exact sequence,
 we have $H^{\bullet}(\tilde U,\tilde U \cap Z,\scC)=0$.

\subsection{}
Fix for a moment a cohomological degree $n$,
and let $U_{1},\ldots,U_{N}$ be the non-empty intersections
of $n+1$-tuples of sets in $\gU$, for which $U_{i}\cap Z_{0}=\emptyset$.
For each $U_{i}$, we let $U_{i}^{(r)}$ be the preimage of $U_{i}$ in $Y_{r}$
and $\tilde U_{i}$ be the preimage of $U_{i}$ in $Y$.

Recall that $\check H^{\bullet}(\gU^{(r)},(\Z/p^{r})^{Z})$
 is the cohomology of the chain complex
$$
	\check C^{n}(\gU^{(r)},(\uZps)^{Z_{r}})
	=
	\prod_{i=1}^{N} (\uZps)^{Z_{r}}(U_{i}^{(r)}).
$$
Each $U_{i}$ is contractible and disjoint from $Z_{0}$.
Furthermore $U_{i}^{(r)}=U_{i}\times (G/G_{r})$,
 so we have an isomorphism of $G$-modules:
$(\uZps)^{Z_{r}}(U_{i}^{(r)})=(\uZps)(G/G_{r})$.
This gives
$$
	\check C^{n}(\gU^{(r)},(\uZps)^{Z_{r}})
	=
	\left((\uZps)(G/G_{r})\right)^{N}.
$$
Similarly, we have 
$$
	\check C^{n}(\tilde \gU,(\uZps)^{Z})
	=
	\left((\uZps)(G)\right)^{N}.
$$
Comparing the two formulae, it is clear that
$$
	\check C^{\bullet}(\tilde \gU,(\uZps)^{Z})
	=
	\limdr \check C^{\bullet}(\gU^{(r)},(\uZps)^{Z_{r}}).
$$
Since the functor $\limdr$ is exact, we have
$$
	\check H^{\bullet}(\tilde \gU,(\uZps)^{Z})
	=
	\limdr \check H^{\bullet}(\gU^{(r)},(\uZps)^{Z_{r}}).
$$

\subsection{}
Note also that $\scC^{Z}(\tilde U_{i})=\scC(G)$,
and so we have
$$
	\check C^{n}(\tilde \gU,\scC^{Z})
	=
	\scC(G)^{N}.
$$
It follows that $\check C^{n}(\tilde \gU,\scC^{Z})$
 is an admissible $\Z_{p}[G]$-module in the sense of
 Definition 1.2.1 of \cite{emerton}.
Furthermore we have:
$$
	\check C^{\bullet}(\tilde \gU,\scC^{Z})
	=
	\limprojs \check C^{\bullet}(\tilde \gU,(\uZps)^{Z}),
	\quad
	\check C^{\bullet}(\tilde \gU,(\uZps)^{Z})
	=
	\check C^{\bullet}(\tilde \gU,\scC^{Z})/p^{s}.
$$
Hence by Proposition 1.2.12 of \cite{emerton}, we have:
$$
	\check H^{\bullet}(\tilde \gU,\scC^{Z})
	=
	\limprojs \check H^{\bullet}( \tilde \gU,(\uZps)^{Z}).
$$
By the previous part of the proof, we have:
$$
	\check H^{\bullet}(\tilde \gU,\scC^{Z})
	=
	\limprojs \limdr
	\check H^{\bullet}( \gU^{(r)},(\uZps)^{Z_{r}}).
$$
Since our covers are acyclic, this translates to
$$
	\check H^{\bullet}(Y,Z,\scC)
	=
	\limprojs \limdr
	\check H^{\bullet}( Y_{r},Z_{r},\uZps).
$$
On the other hand, by Theorem \ref{spantheorem},
we have
$$
	\check H^{\bullet}( Y_{r},Z_{r},\uZps)
	=
	H^{\bullet}_{\cpct}( Y_{r}\setminus Z_{r},\Zps).
$$
The result follows.
\end{proof}

\begin{corollary}
	\label{reinterpretusual}
	With the above notation,
	$\tH^{n}(Y,\Z_{p})=\check H^{n}(Y, \scC)$.
\end{corollary}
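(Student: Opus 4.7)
The plan is simply to specialize Theorem \ref{reinterpret1} to the case where the tower of subcomplexes is trivial, i.e.\ $Z_r = \emptyset$ for all $r$. First, I would observe that the three hypotheses of Emerton's formalism are vacuously satisfied by the empty tower: the inclusions $\emptyset \to \emptyset$ are trivially $G$-equivariant, $G_r$ acts trivially on $\emptyset$, and the map $\emptyset \to \emptyset$ is a Galois cover for any group. So the hypotheses of Theorem \ref{reinterpret1} hold for the data $(\{Y_r\}, \{\emptyset\})$, and we obtain $Z = \limp{r} \emptyset = \emptyset$ and $Y^\circ = Y \setminus \emptyset = Y$.

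Next I would identify both sides of the resulting isomorphism with the terms appearing in the corollary. On the left, since each $Y_r$ is a \emph{finite} (hence compact) simplicial complex and $Y_r^\circ = Y_r \setminus \emptyset = Y_r$, compactly supported cohomology coincides with ordinary cohomology:
$$
    H^n_{\cpct}(Y_r^\circ, \Z/p^s) = H^n(Y_r, \Z/p^s).
$$
Passing to the iterated limit therefore gives $\tH^n_{\cpct}(Y^\circ, \Z_p) = \tH^n(Y, \Z_p)$ by the very definitions in \S\ref{emerton-formal}. On the right, the presheaf $\scC^Z$ defined in the excerpt satisfies $\scC^\emptyset(U) = \scC(U)$ for every open $U$, since the alternative clause $U \cap Z \ne \emptyset$ is never triggered. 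Hence $\check H^n(Y, \emptyset, \scC) = \check H^n(Y, \scC^\emptyset) = \check H^n(Y, \scC)$.

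Combining these two identifications with Theorem \ref{reinterpret1} gives the claimed equality
$$
    \tH^n(Y, \Z_p) = \tH^n_{\cpct}(Y^\circ, \Z_p) = \check H^n(Y, Z, \scC) = \check H^n(Y, \scC),
$$
as $G$-modules. There is no substantive obstacle here; the content of the corollary is already contained in the theorem once one checks that taking $Z_r = \emptyset$ converts compactly supported cohomology into ordinary cohomology (which uses compactness of the $Y_r$) and that the relative Čech cohomology with empty subspace reduces to absolute Čech cohomology.
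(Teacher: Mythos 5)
Your proof is correct and is essentially the paper's own argument: specialize Theorem \ref{reinterpret1} to $Z=\emptyset$, observe that compactness of the $Y_r$ makes compactly supported cohomology coincide with ordinary cohomology, and that relative \v Cech cohomology with empty subspace is absolute \v Cech cohomology. You simply spell out the routine verifications more explicitly than the paper does.
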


\begin{proof}
We apply the theorem in the case that $Z$ is empty.
Since $Y^{\circ}=Y$, which is compact,
 it follows that usual cohomology is the same as compactly supported cohomology on
 each $Y_{r}$.
\end{proof}

\begin{corollary}
	\label{longexactsequence}
	In the notation of the introduction, there is a long exact sequence:
	$$
		\tH^{n}_{\cpct}(K^{\gp},\Z_{p})
		\to
		\tH^{n}(K^{\gp},\Z_{p})
		\to
		\tH^{n}_{\partial}(K^{\gp},\Z_{p})
		\to
		\tH^{n+1}_{\cpct}(K^{\gp},\Z_{p}).
	$$
\end{corollary}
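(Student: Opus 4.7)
The plan is to deduce the long exact sequence directly from the reinterpretation theorems together with the long exact sequence of \v Cech cohomology of the pair $(Y,Z)$ already recorded above Theorem \ref{comparisontheorem}.

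First I would set up the geometric data in Emerton's formalism: take $G=K_{\gp,0}$ (some fixed compact open subgroup), with $G_{r}=K_{\gp,r}$ a decreasing basis of open normal subgroups, and let $Y_{r}=Y(K_{\gp,r}K^{\gp})^{\BS}$, $Z_{r}=\partial Y(K_{\gp,r}K^{\gp})^{\BS}$. These form compatible towers of finite simplicial complexes with $(Z_{r})$ a subcomplex of $(Y_{r})$; the covering map hypotheses hold for $r\gg 0$ after shrinking $K^{\gp}$ if necessary. Let $Y$ and $Z$ be the corresponding projective limits. By Theorem \ref{reinterpret1} applied to the pair $(Y,Z)$ we get
$$\tH^{n}_{\cpct}(K^{\gp},\Z_{p})=\check H^{n}(Y,Z,\scC),$$
and by Corollary \ref{reinterpretusual} we get
$$\tH^{n}(K^{\gp},\Z_{p})=\check H^{n}(Y,\scC).$$

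Next I would apply Corollary \ref{reinterpretusual} to the tower $(Z_{r})$ on its own, viewed as an instance of Emerton's formalism with trivial boundary subcomplex. This produces the identification
$$\tH^{n}_{\partial}(K^{\gp},\Z_{p})=\check H^{n}(Z,\scC).$$
Here one must check that the compatibility $\check H^{n}(Z,\scC)=\check H^{n}(Y,\scC_{Z})$ coming from the discussion just before Theorem \ref{comparisontheorem} is consistent with the previous identification; this is immediate from how $\scC_{Z}$ is defined as the restriction of $\scC$ to open sets meeting $Z$.

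With these three identifications in hand, the long exact sequence
$$\check H^{n}(Y,Z,\scC)\to\check H^{n}(Y,\scC)\to\check H^{n}(Z,\scC)\to\check H^{n+1}(Y,Z,\scC)$$
derived from the short exact sequence of presheafs $0\to\scC^{Z}\to\scC\to\scC_{Z}\to 0$ translates directly into the claimed long exact sequence of Emerton's spaces.

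The only real point requiring care, and thus the expected main obstacle, is verifying that the boundary tower $(Z_{r})$ really does fit Emerton's formalism so that Corollary \ref{reinterpretusual} applies and yields $\check H^{n}(Z,\scC)=\tH^{n}_{\partial}(K^{\gp},\Z_{p})$. One must check that the maps $Z_{r}\to Z_{r'}$ are Galois covers with the correct deck groups $G_{r'}/G_{r}$, and that the acyclic cover constructed in the proof of Theorem \ref{reinterpret1} restricts to a well-behaved cover of $Z$; this is the content of condition (2) on $\gU$ in that proof, which was arranged precisely so that restriction to the boundary behaves well. Given the properties already built into the cover $\gU$, the verification is routine, and the long exact sequence then follows formally.
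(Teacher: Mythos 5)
Your argument is correct and follows the paper's own proof almost verbatim: identify $\tH^{n}_{\cpct}$, $\tH^{n}$, and $\tH^{n}_{\partial}$ with relative, absolute, and boundary \v Cech cohomology via Theorem \ref{reinterpret1} and Corollary \ref{reinterpretusual}, then read off the long exact sequence of the pair. The only difference is that you give more explicit attention to verifying that the boundary tower fits Emerton's formalism and that the restricted cover behaves well, points the paper treats as implicit in its setup; this is a sound instinct but does not change the substance of the argument.
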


\begin{proof}
We have shown above that
\begin{eqnarray*}
	\tH^{\bullet}(K^{\gp},\Z_{p})
	&=&
	\check H^{\bullet}(Y(K^{\gp}),\scC),\\
	\tH^{\bullet}_{\partial}(K^{\gp},\Z_{p})
	&=&
	\check H^{\bullet}(\partial Y(K^{\gp})^{\BS},\scC),\\
	\tH^{\bullet}_{\cpct}(K^{\gp},\Z_{p})
	&=&
	\check H^{\bullet}(Y(K^{\gp})^{\BS},\partial Y(K^{\gp})^{\BS},\scC).
\end{eqnarray*}
The result follows from the long exact sequence in \v Cech cohomology.
\end{proof}

\subsection{A local system}
\label{interpret2sec}

We keep the notation introduced in \S\ref{emerton-formal}.
We have a finite simplicial complex $Y_{0}$ and a
profinite simplicial complex $Y$, together with a map
$Y\to Y_{0}$. This map is a $G$-bundle.
We shall assume for a moment that $Y_{0}$
 is connected and write $\Gamma$ for the fundamental group of $Y_{0}$.
We regard $\Gamma$ as acting on the left on the universal cover $Y^{\univ}$ of $Y$.
As was noted by Emerton (\S2.1 of \cite{emerton}), this data corresponds to giving
a group homomorphism $\Gamma\to G$.
The space $Y$ may then be constructed as follows:
$$
	Y
	=
	\Gamma\backslash(Y^{\univ}\times G),
$$
where the action of $\Gamma$ on $Y^{\univ}\times G$
 is $\gamma (y,g)=(\gamma y,\gamma g)$.
The right action of $G$ on $Y$ is $(y,g)h=(y,gh)$.

Let $\scC(G)$ be the space of continuous functions $G\to\Z_{p}$.
We have an action of $\Gamma$ on $\scC(G)$ by left translation.
By abuse of notation, we shall also write $\scC(G)$ for the corresponding
local system on $Y_{0}$.
We also have an action of $G$ on $\scC(G)$ by right translation.
This gives the local system $\scC(G)$ the structure of a sheaf of
 $G$-modules.

If we again allow $Y_{0}$ to have finitely many connected components,
then we may make the same construction of a local system $\scC(G)$
on each connected component.
Together these form a local system on the whole of $Y_{0}$.
With this notation we have:

\begin{theorem}
	\label{reinterpret2}
	There are canonical isomorphisms of $G$-modules:
	$$
		H^{\bullet}(Y,\scC)
		=
		H^{\bullet}(Y_{0},\scC(G)),
		\quad
		H^{\bullet}(Y,Z,\scC)
		=
		H^{\bullet}(Y_{0},Z_{0},\scC(G)).
	$$
\end{theorem}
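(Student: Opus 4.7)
The plan is to prove both isomorphisms simultaneously via \v Cech cohomology, reusing the acyclic covers from the proof of Theorem \ref{reinterpret1}. After decomposing into connected components, I may assume $Y_{0}$ is connected. Recall from that proof the finite open cover $\gU$ of $Y_{0}$ whose finite intersections are either empty or contractible, with the additional property that whenever $U\cap Z_{0}\ne\emptyset$, $U\cap Z_{0}$ is a deformation retract of $U$. Let $\tilde\gU$ be the preimage cover of $Y$; the earlier proof already shows $\tilde\gU$ is $\scC$-acyclic and $\scC^{Z}$-acyclic, and by Bredon's theorem \v Cech and sheaf cohomology agree on the paracompact Hausdorff space $Y$.

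Next I would check that $\gU$ is acyclic for $\scC(G)$ and for $\scC(G)^{Z_{0}}$. For a contractible intersection $U$ disjoint from $Z_{0}$, simple connectivity forces $\scC(G)|_{U}$ to be a constant sheaf with fibre $\scC(G)$, so the comparison theorem combined with the contractibility of $U$ gives $\check H^{>0}(U,\scC(G))=0$. When $U\cap Z_{0}\ne\emptyset$, the deformation retraction makes the restriction $\check H^{\bullet}(U,\scC(G))\to\check H^{\bullet}(U\cap Z_{0},\scC(G))$ an isomorphism, and the long exact sequence of the pair yields $\check H^{>0}(U,U\cap Z_{0},\scC(G))=0$. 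Thus by Leray's theorem both desired isomorphisms reduce to comparing the \v Cech complexes $\check C^{\bullet}(\tilde\gU,\scC)$ with $\check C^{\bullet}(\gU,\scC(G))$, and similarly for the relative versions.

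The core identification is the following. For every non-empty intersection $U = U_{i_{0}}\cap\cdots\cap U_{i_{n}}$ in $\gU$, contractibility of $U$ makes it simply connected, so the principal $G$-bundle $Y\to Y_{0}$ trivialises over $U$ as $\tilde U\cong U\times G$ as a topological $G$-space. Because $U$ is connected and $\Z_{p}$ is totally disconnected, every continuous $\Z_{p}$-valued function on $U\times G$ is constant in the $U$-direction, so $\scC(\tilde U)=\scC(G)$ canonically; this agrees with the identification of sections of the local system $\scC(G)$ over the simply connected $U$ with its fibre $\scC(G)$. The vanishing convention for the superscript-$Z$ sheaves matches on both sides since $\tilde U\cap Z\ne\emptyset\iff U\cap Z_{0}\ne\emptyset$. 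These term-by-term identifications are compatible with the \v Cech differentials because the restriction maps along inclusions are simply given by evaluation of $\scC(G)$-valued functions.

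It remains to verify $G$-equivariance: the $G$-action on $\scC(\tilde U)$ is induced from right translation on the $G$-factor of the trivialisation, which is exactly the right translation action defining the $G$-module structure on $\scC(G)$. The main obstacle is ensuring that the local trivialisations of the $G$-bundle over the various intersections can be chosen coherently so that the identifications glue into a genuine morphism of \v Cech complexes; this is resolved by invoking the explicit description $Y=\Gamma\backslash(Y^{\univ}\times G)$ from \S\ref{interpret2sec}. A choice of lift of each $U$ to $Y^{\univ}$ (possible since $U$ is simply connected) produces the trivialisation, and the ambiguity between such lifts differs by the $\Gamma$-action, whose $G$-component is precisely the left-translation monodromy that defines the local system $\scC(G)$.
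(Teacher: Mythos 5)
Your proof is correct, and it follows a genuinely different route from the paper's. The paper proves the result cleanly via the Leray spectral sequence of the projection $f\colon Y\to Y_{0}$, reducing everything to two lemmas: $f_{*}\scC=\scC(G)$ (resp.\ $f_{*}\scC^{Z}=\scC(G)^{Z_{0}}$) and $R^{>0}f_{*}\scC=0$. You instead recycle the acyclic covers $\gU$ and $\tilde\gU$ from the proof of Theorem~\ref{reinterpret1} and match the two \v Cech complexes term by term, which amounts to unwinding the degenerate Leray spectral sequence by hand. Both proofs turn on the same two facts: for a contractible $U\in\gU$ one has $\scC(\tilde U)\cong\scC(G)$ (because $U$ connected and $\Z_{p}$ totally disconnected force continuous functions on $U\times G$ to be constant along $U$), and $\check H^{>0}(\tilde U,\scC)=0$. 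What you gain is an explicit description of the isomorphism at chain level; what you lose is that you must verify by hand that the local identifications $\scC(\tilde U_{i_{0}}\cap\cdots\cap\tilde U_{i_{n}})\cong\scC(G)$ can be chosen coherently so that the \v Cech differentials match. You handle this correctly by choosing lifts to $Y^{\univ}$ and noting that changing a lift twists by an element of $\Gamma$, whose action on $\scC(G)$ (left translation) is exactly the monodromy defining the local system $\scC(G)$ and commutes with the right $G$-action — so $G$-equivariance survives the choices. The Leray-spectral-sequence route packages all of that bookkeeping into the definition of $f_{*}$, which is why the paper's argument is shorter, but your version is a valid and more concrete alternative.
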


\begin{remark}
	It is clear that $H^{\bullet}(Y,\scC)$
	 depends only on the profinite simplicial complex $Y$.
	The independence of $H^{\bullet}(Y_{0},\scC(G))$ of the
	level $Y_{0}$ follows from Shapiro's Lemma.
\end{remark}

\begin{remark}
	The formula of the theorem makes it in principle
	possible to calculate Emerton's groups $\tH^{\bullet}_{\ast}(K^{\gp},\Z_{p})$,
	at least as $K_{\gp}$-modules, as long as one has a triangulation
	of the space $Y(K^{\gp}K_{\gp})$, and as long as $K_{\gp}$ is torsion-free.
\end{remark}

\begin{proof}
We have a projection map $f:Y\to Y_{0}$.
There is a corresponding Leray spectral sequence:
$$
	H^{p}(Y_{0},R^{q}f_{*} \scC)
	\implies
	H^{p+q}(Y,\scC).
$$
In view of this, the theorem follows from the following two Lemmas.
\end{proof}

\begin{lemma}
	$f_{*}\scC=\scC(G)$ and $f_{*}(\scC^{Z})=\scC(G)^{Z}$.
\end{lemma}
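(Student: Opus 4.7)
The plan is to verify the statements locally on $Y_{0}$, using the fact that the map $f\colon Y\to Y_{0}$ is locally trivial with fibre $G$. Since the question is local on $Y_{0}$, I may work with the basis of open sets $U\subset Y_{0}$ that are connected, contractible, and evenly covered by the universal cover $p\colon Y^{\univ}\to Y_{0}$. For such a $U$, fix a lift $\tilde U\subset Y^{\univ}$; then $p^{-1}(U)=\bigsqcup_{\gamma\in\Gamma}\gamma\tilde U$, and the description $Y=\Gamma\backslash(Y^{\univ}\times G)$ together with the diagonal $\Gamma$-action $\gamma(y,g)=(\gamma y,\gamma g)$ gives a canonical homeomorphism
$$
	f^{-1}(U)\;\cong\;\tilde U\times G\;\cong\;U\times G.
$$

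Next I would compute $(f_{*}\scC)(U)=\scC(f^{-1}(U))$ as the space of continuous maps $U\times G\to\Z_{p}$. Because $G$ is compact Hausdorff and $\Z_{p}$ is a metric space, the exponential adjunction applies and produces a bijection
$$
	C(U\times G,\Z_{p})\;=\;C(U,\scC(G)),
$$
the latter being precisely the sections over $U$ of the constant sheaf with value $\scC(G)$. Since $U$ is simply connected, the local system $\scC(G)$ restricted to $U$ is canonically this constant sheaf, so $(f_{*}\scC)(U)=\scC(G)(U)$. This defines the isomorphism on a basis of opens.

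To promote this to a global isomorphism of sheaves I need to check that the identification is compatible with the $\Gamma$-action, i.e.\ that under change of sheet of $p^{-1}(U)$ the above bijection transports to left translation by $\Gamma$ on $\scC(G)$. This is the main point and the only subtle step: the diagonal $\Gamma$-invariance of a function $\phi\colon p^{-1}(U)\times G\to\Z_{p}$ satisfying $\phi(\gamma y,\gamma g)=\phi(y,g)$ translates, under the exponential adjunction, to the condition $s(\gamma y)(g')=s(y)(\gamma^{-1}g')$ on the associated map $s\colon p^{-1}(U)\to\scC(G)$, which is exactly the condition that $s$ descend to a section of the local system $\scC(G)$ defined by left translation. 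Tracking the right $G$-action through the same adjunction shows it becomes right translation on $\scC(G)$, so the isomorphism respects the sheaf-of-$G$-modules structure.

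The second statement follows formally from the first. Because $Z$ is the preimage of $Z_{0}$ under $f$, one has $f^{-1}(U)\cap Z=\emptyset$ if and only if $U\cap Z_{0}=\emptyset$, so for $U$ in the chosen basis
$$
	(f_{*}\scC^{Z})(U)\;=\;\scC^{Z}(f^{-1}(U))\;=\;
	\begin{cases}\scC(f^{-1}(U)) & \text{if }U\cap Z_{0}=\emptyset,\\ 0 & \text{otherwise,}\end{cases}
$$
and the first isomorphism identifies this with $\scC(G)^{Z_{0}}(U)$. The main obstacle is really only the equivariance check in the previous paragraph; once that is done, the rest is just unpacking definitions and invoking the compactness of $G$ for the exponential law.
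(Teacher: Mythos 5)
Your proof is correct and takes essentially the same route as the paper's: trivialize $f^{-1}(U)\cong U\times G$ over contractible connected $U$ and identify $\scC(U\times G)$ with $\scC(G)$, handling $\scC^{Z}$ by noting $Z=f^{-1}(Z_{0})$. The one step worth making explicit (the paper is equally terse here) is why $C(U,\scC(G))$ equals the constant-sheaf sections: this uses that $\scC(G)$ with the sup-norm is an ultrametric, hence totally disconnected, space, so continuous maps from the connected $U$ to it are constant; your explicit check of the $\Gamma$- and $G$-equivariance is a welcome extra that the paper leaves to the reader.
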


\begin{proof}
For any contractible open subset $U\subset Y_{0}$, we have
$$
	f_{*}\scC(U)
	=
	\scC(\tilde U),
$$
where $\tilde U$ is the preimage of $U$ in $Y$.
Topologically we have $\tilde U=U\times G$.
Since $U$ is connected, we have an isomorphism
$$
	\scC(\tilde U)
	=
	\scC(G).
$$
\end{proof}

\begin{lemma}
	For $n>0$, $R^{n}f_{*}\scC=R^{n}f_{*}(\scC^{Z})=0$.
\end{lemma}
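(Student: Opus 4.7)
The plan is to compute $R^{n}f_{*}\scC$ and $R^{n}f_{*}(\scC^{Z})$ stalk by stalk and to reduce the calculation to the acyclicity statements already established in the proof of Theorem \ref{reinterpret1}. Recall that the higher direct image sheaf is the sheafification of the presheaf $U\mapsto H^{n}(f^{-1}(U),-)$, so it suffices to exhibit, for every $y_{0}\in Y_{0}$, a basis of open neighborhoods $U$ of $y_{0}$ on which these cohomology groups vanish in positive degree.

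First I would choose, for each $y_{0}\in Y_{0}$, arbitrarily small open neighborhoods $U\subset Y_{0}$ of $y_{0}$ with exactly the two properties enjoyed by the members of the cover $\gU$ used earlier: $U$ is contractible, and either $U\cap Z_{0}=\emptyset$ or $U\cap Z_{0}$ is a deformation retract of $U$. Such neighborhoods exist because $Y_{0}$ is a finite simplicial complex and $Z_{0}$ is a subcomplex (for instance, the open stars of barycentric subdivisions work, after possibly shrinking so that the bundle trivializes). For each such $U$, the preimage $\tilde U=f^{-1}(U)$ is equivariantly homeomorphic to $U\times G$, exactly as in \S\,3.1 of the previous proof.

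Next I would invoke the acyclicity results already proved: the arguments labeled ``$\tilde\gU$ is $\scC$-acyclic'' and ``$\tilde\gU$ is $\scC^{Z}$-acyclic'' apply verbatim to any single $\tilde U$ of the above form (a one-fold ``intersection'' of sets from $\gU$), giving
\[
    \check H^{n}(\tilde U,\scC)=0,\qquad \check H^{n}(\tilde U,\scC^{Z})=0\quad\text{for }n>0.
\]
Since $\tilde U$ is paracompact Hausdorff, Čech cohomology coincides with sheaf cohomology there, so $H^{n}(\tilde U,\scC)=H^{n}(\tilde U,\scC^{Z})=0$ for $n>0$.

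Finally, I would pass to the direct limit over such neighborhoods of $y_{0}$ to conclude that the stalks $(R^{n}f_{*}\scC)_{y_{0}}$ and $(R^{n}f_{*}(\scC^{Z}))_{y_{0}}$ both vanish for $n>0$, yielding the desired sheaf-theoretic vanishing. The one place that requires a bit of care, and is really the only potential obstacle, is checking that the local triviality $\tilde U\cong U\times G$ together with the retraction of $U$ onto $U\cap Z_{0}$ can be arranged simultaneously on arbitrarily small neighborhoods of a boundary point; this is where the simplicial structure on $(Y_{0},Z_{0})$ is used, and it is handled in exactly the same way as the construction of the cover $\gU$ in \S\,3.1.
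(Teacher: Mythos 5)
Your proposal is correct and follows essentially the same route as the paper: both reduce to the already-established acyclicity arguments (\S2.5 and \S2.6) over contractible $U$ with $\tilde U\cong U\times G$, the paper phrasing it via a cofinal refinement and you phrasing it stalkwise. Your version is in fact a bit more careful than the paper's one-line argument, since you explicitly retain the deformation-retract condition on $U\cap Z_{0}$ that is needed to invoke \S2.6 for $\scC^{Z}$, whereas the paper only mentions contractibility.
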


\begin{proof}
Recall that $R^{n}f_{*}\scC$ is the sheafification of the presheaf
$$
	U
	\mapsto
	H^{n}(\tilde U,\scC).
$$
If $U$ is contractible, then the arguments in \S2.5 and \S2.6
 shows that $H^{n}(\tilde U,\scC)=0$ for $n>0$.
Since any open cover of $Y_{0}$ has a refinement consisting of contractible sets,
the result follows.
\end{proof}

\begin{corollary}
	\label{groupcohomology}
	If $Y_{0}$ is a $K(\Gamma,1)$ space (i.e.
	 if $Y_{0}$ is connected and its universal cover is contractible)
	 then there is a canonical isomorphism of $G$-modules:
	$$
		H^{\bullet}(Y,\scC)
		=
		H^{\bullet}_{\Group}(\Gamma,\scC(G)).
	$$
\end{corollary}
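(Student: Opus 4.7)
The proof is a short deduction from Theorem \ref{reinterpret2} combined with a standard comparison between sheaf cohomology of a local system on a $K(\Gamma,1)$ and the corresponding group cohomology. So the plan splits into two steps: invoke the theorem to replace $H^\bullet(Y,\scC)$ with $H^\bullet(Y_0,\scC(G))$, and then identify the right hand side with $H^\bullet_{\Group}(\Gamma,\scC(G))$.

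For the second step, I would work with the universal covering map $p : Y_0^{\univ} \to Y_0$, which under the $K(\Gamma,1)$ hypothesis is a regular $\Gamma$-covering whose total space is contractible. The pullback $p^*\scC(G)$ is a constant sheaf on $Y_0^{\univ}$ (a local system on a simply connected space is constant), and its global sections over $Y_0^{\univ}$ recover $\scC(G)$ as a $\Gamma$-module: the $\Gamma$-action on this space of sections coincides, by construction of the local system in \S\ref{interpret2sec}, with the left-translation action of $\Gamma$ on $\scC(G)$. Since $Y_0^{\univ}$ is contractible, the higher sheaf cohomology $H^q(Y_0^{\univ}, p^*\scC(G))$ vanishes for $q>0$.

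The Cartan--Leray spectral sequence for the $\Gamma$-covering $p$,
\[
E_2^{p,q} = H^p\bigl(\Gamma, H^q(Y_0^{\univ}, p^*\scC(G))\bigr) \implies H^{p+q}(Y_0, \scC(G)),
\]
therefore collapses onto the edge, giving $H^\bullet(Y_0,\scC(G)) = H^\bullet_{\Group}(\Gamma, \scC(G))$. Combining with Theorem \ref{reinterpret2} gives the asserted isomorphism, and the $G$-equivariance is automatic since the right-translation action of $G$ on $\scC(G)$ commutes with $\Gamma$ and is visible at every stage of the argument. Equivalently, and perhaps more concretely, one can choose a $\Gamma$-equivariant triangulation of $Y_0^{\univ}$; the cellular cochain complex $\Hom_\Gamma(C_\bullet(Y_0^{\univ}), \scC(G))$ then simultaneously computes $H^\bullet(Y_0, \scC(G))$ and $H^\bullet_{\Group}(\Gamma, \scC(G))$, since $C_\bullet(Y_0^{\univ})$ is a free $\Z[\Gamma]$-resolution of $\Z$.

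The only potential obstacle is bookkeeping: one must confirm that the $\Gamma$-module structure arising from the local system recipe in \S\ref{interpret2sec} is exactly the one appearing in group cohomology, and that the $G$-action is preserved through the spectral sequence. Both are essentially formal given the explicit description $Y = \Gamma \backslash (Y^{\univ} \times G)$, so no real work is needed beyond citing the standard machinery.
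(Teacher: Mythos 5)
Your proposal is correct and takes the same route as the paper, which states the corollary with no further proof as an immediate consequence of Theorem~\ref{reinterpret2}: one passes from $H^{\bullet}(Y,\scC)$ to $H^{\bullet}(Y_{0},\scC(G))$ and then invokes the standard identification of sheaf cohomology of a local system on a $K(\Gamma,1)$ with group cohomology of $\Gamma$ in the corresponding $\Gamma$-module. Your spelling out of that last step (via Cartan--Leray on the universal cover, or equivalently via a $\Gamma$-equivariant triangulation and $\Hom_{\Gamma}(C_{\bullet}(Y_{0}^{\univ}),\scC(G))$) and your check that the $\Gamma$-action and the $G$-action match the ones in \S\ref{interpret2sec} are exactly the bookkeeping the paper leaves implicit.
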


\begin{corollary}
	In the notation of the introduction,
	let $K_{f}=K_{\gp}K^{\gp}$, where $K_{\gp}$ is torsion-free.
	Then we have as $K_{\gp}$-modules:
	\begin{eqnarray*}
		\tH^{\bullet}(K^{\gp},\Z_{p})
		&=&
		H^{\bullet}(Y(K_{f}),\scC(K_{\gp})),\\
		\tH^{\bullet}_{\cpct}(K^{\gp},\Z_{p})
		&=&
		H^{\bullet}_{\cpct}(Y(K_{f}),\scC(K_{\gp}))
		\;=\;
		H^{\bullet}(Y(K_{f})^{\BS},\partial Y(K_{f})^{\BS},
		\scC(K_{\gp})),\\
		\tH^{\bullet}_{\partial}(K^{\gp},\Z_{p})
		&=&
		H^{\bullet}(\partial Y(K_{f})^{\BS},
		\scC(K_{\gp})).
	\end{eqnarray*}
\end{corollary}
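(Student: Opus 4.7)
The plan is to deduce the corollary directly from Theorem \ref{reinterpret2} (together with Corollary \ref{reinterpretusual}), by verifying that each case fits the formalism of \S\ref{emerton-formal} with $G = K_{\gp}$ and with a suitable choice of the $Y_r$, $Z_r$. Concretely, I would fix a basis $\{K_{\gp,r}\}$ of open normal subgroups of $K_{\gp}$, setting $G_r = K_{\gp,r}$. For the first identity, take $Y_r = Y(K_{\gp,r} K^{\gp})^{\BS}$ with $Z_r = \emptyset$; for the second, take the same $Y_r$ together with $Z_r = \partial Y(K_{\gp,r}K^{\gp})^{\BS}$; for the third, take $Y_r = \partial Y(K_{\gp,r}K^{\gp})^{\BS}$ with $Z_r = \emptyset$. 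In each case the level-zero space is $Y_0 = Y(K_f)$ (resp.\ $Y(K_f)^{\BS}$, resp.\ $\partial Y(K_f)^{\BS}$), and the projective limit $Y$ and $Z$ are the spaces to which Theorem \ref{reinterpret1} and Corollary \ref{reinterpretusual} apply.

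Next I would check the three hypotheses (i)--(iii) of \S\ref{emerton-formal}. The $K_{\gp}$-equivariance of the transition maps is standard; the triviality of $G_r$ on $Y_r$ is the definition of the quotient. The key point --- and the only place where the hypothesis on $K_{\gp}$ enters --- is the covering condition (iii): one needs $K_{\gp,r'}/K_{\gp,r}$ to act freely on $Y(K_{\gp,r}K^{\gp})$ and on its Borel--Serre boundary. This is precisely where torsion-freeness of $K_{\gp}$ is used, since any stabilizer of a point in $Y(K_f)$ is a finite subgroup of an arithmetic group, and conjugation by an archimedean element places it inside $K_{\gp}$. After replacing $K_{\gp,r}$ with a cofinal subsequence we may also assume the maps are simplicial.

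With the formalism in force, the proof is a direct chain of identifications. For the usual cohomology, Corollary \ref{reinterpretusual} gives $\tH^{\bullet}(K^{\gp},\Z_p) = \check H^{\bullet}(Y,\scC)$, and Theorem \ref{reinterpret2} then rewrites this as $H^{\bullet}(Y(K_f),\scC(K_{\gp}))$. For the boundary, the same argument applied to the Borel--Serre boundary gives the third identity. For the compactly supported case, Theorem \ref{reinterpret1} gives $\tH^{\bullet}_{\cpct}(K^{\gp},\Z_p) = \check H^{\bullet}(Y(K^{\gp})^{\BS},\partial Y(K^{\gp})^{\BS},\scC)$, and Theorem \ref{reinterpret2} rewrites this as $H^{\bullet}(Y(K_f)^{\BS},\partial Y(K_f)^{\BS},\scC(K_{\gp}))$; the equality with $H^{\bullet}_{\cpct}(Y(K_f),\scC(K_{\gp}))$ is the usual relation between relative cohomology and compactly supported cohomology for a finite simplicial complex minus a subcomplex, which is essentially Theorem \ref{spantheorem} extended from constant to local coefficients (for a torsion-free $K_{\gp}$ the local system $\scC(K_{\gp})$ is soft-enough to apply the argument componentwise on each connected component of $Y_0$).

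The only mildly delicate point is the compatibility of the Borel--Serre compactification with taking projective limits of congruence covers: one must know that the limit of $Y(K_{\gp,r}K^{\gp})^{\BS}$ really is a $K_{\gp}$-bundle over $Y(K_f)^{\BS}$, and that the boundary behaves correspondingly. This is standard (cf.\ \cite{borel-serre} or \cite{Borel-Ji-book}) once the torsion-freeness is in place; given this, the corollary is just the packaging of Theorems \ref{reinterpret1}, \ref{reinterpret2} and Corollary \ref{reinterpretusual} in three different geometric contexts, and no new work is required.
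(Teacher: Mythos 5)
Your proposal is correct and follows essentially the same route the paper (implicitly) takes: the corollary is simply Theorems \ref{reinterpret1}, \ref{reinterpret2} and Corollary \ref{reinterpretusual} specialized to the three choices of $(Y_r,Z_r)$ coming from the arithmetic quotients, their Borel--Serre compactifications, and their boundaries, with torsion-freeness of $K_{\gp}$ guaranteeing that the transition maps form Galois covers and that each $Y_0$ is a finite simplicial complex. The paper gives no separate proof, treating the statement as immediate; your spelled-out verification of the formalism in \S\ref{emerton-formal}, and your remark that the identification $H^{\bullet}_{\cpct}=H^{\bullet}(Y^{\BS},\partial Y^{\BS},-)$ is the local-coefficient version of Theorem \ref{spantheorem}, match the intended reasoning.
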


\section{Another proof of Emerton's spectral sequence}
\label{spectralseqsec}

We shall show in this section that Emerton's spectral sequence is an
example of a Grothendieck spectral sequence.
We are forced in this section to work over $\Q_{p}$ rather than $\Z_{p}$.
This is because $\scC(G)$ is not injective as a continuous
$\Z_{p}[G]$-module, whereas $\scC(G,\Q_{p})$ is injective in a suitable
category of representations of $G$.
The main result in this section is Theorem \ref{myspecseq}.
The results of this section are not required in the following sections.

\subsection{Relative homological algebra}
As before we shall write $G$ for a compact $p$-adic analytic group.
We write $\modG$ for the category of continuous representations of $G$ on
topological vector spaces over $\Q_{p}$.
Here ``continuous'' means that the map $G\times V \to V$
defining the action is continuous.
The morphisms in this category are defined to be the $G$-equivariant continuous linear maps.

The category $\modG$ is abelian, but does not have enough injectives; this
is because the category of topological vector spaces does not have enough injectives.
To get around this problem, one defines a \emph{strong morphism}
to be a morphism $f:V\to W$ in $\modG$, such that
the exact sequence
$$
	0 \to \ker f \to V \to W \to \coker f \to 0,
$$
is chain homotopic to the zero complex in the category of topological vector spaces.
This means that
\begin{enumerate}
	\item
	$\ker f$ and $\im f$ are closed topological direct summands;
	\item
	The continuous linear bijection $V/\ker f\to \im f$ is a homeomorphism.
\end{enumerate}
(For various subcategories, property (2) is immediate from (1)
by the open mapping theorem.)
An object $I$ of $\modG$ is called \emph{relatively injective} if,
given any strong injection $A\to B$ in $\modG$ and any morphism $f:A\to I$,
the function $f$ extends to a function $\tilde f:B\to I$.
By a \emph{relatively injective resolution of $V$}, we mean an exact sequence
of strong morphisms:
$$
	0
	\to
	V
	\to
	I^{1}
	\to
	I^{2}
	\to
	\cdots,
$$
where each $I^{n}$ is relatively injective.
Every object of $\modG$ has such a resolution,
 and any two such resolutions are chain homotopic.
In particular, if we write $\scC(G,V)$ for
the space of continuous functions from $G$ to $V$, equipped with the uniform topology,
then there is a canonical strong injection $V\to \scC(G,V)$
and $\scC(G,V)$ is relatively injective.

For a left exact functor $\bF:\modG\to \bC$,
one defines the relative (right-) derived functors $(R^{\bullet}_{\rel}\bF)(V)$ by
$$
	(R^{n}_{\rel}\bF)(V)
	=
	H^{n}(\bF(I^{\bullet})).
$$
This is independent of the resolution.
For example, one defines the continuous cohomology groups of $G$
 to be the relative derived functors of the functor $V\mapsto V^{G}$
 (see \cite{casselman-wigner} or \S IX.1.5 of \cite{borel-wallach} for this interpretation):
$$
	H^{n}_{\cts}(G,V)
	=
	(R^{n}_{\rel}(-^{G}))(V).
$$
This is the same as the cohomology of the usual complex of continuous
cochains $G\times \cdots \times G \to V$.
More generally for an object $W$ of $\modG$ we define:
$$
	\Ext_{G}^{n}(W,V)
	=
	(R^{n}_{\rel} \Hom_{\modG}(W,-)) (V).
$$
Short exact sequences of strong morphisms give rise to long exact sequences of
relative derived functors.

\subsection{Continuous admissible representations}
Recall that an \emph{admissible continuous $G$-module}
is a continuous representation of $G$ on a Banach space $V$ over $\Q_{p}$,
such that the continuous dual $V'$ of $V$ is finitely generated over the Iwasawa
algebra of $G$.
We write $\modaG$ for the full subcategory of continuous admissible representations
of $G$.
The duality functor taking $V$ to $V'$ is an antiequivalence of categories, between the
category of continuous admissible representations of $G$, and the category
of finitely generated modules over the Iwasawa algebra.
This duality takes $\scC(G,\Q_{p})$ to the rank $1$ free module.
Consequenctly, $\scC(G,\Q_{p})$ is injective in $\modaG$, and any injective object of
$\modaG$ is a direct summand of $\scC(G,\Q_{p})^{N}$ for some $N$.
From this we deduce the following:

\begin{lemma}
	\begin{enumerate}
		\item
		The category $\modaG$ has enough injectives;
		\item
		The injective objects of $\modaG$ are relatively injective in $\modG$.
	\end{enumerate}
\end{lemma}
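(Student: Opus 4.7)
The plan is to leverage the antiequivalence of categories recorded immediately above the lemma, which identifies $\modaG$ with the opposite of the category of finitely generated modules over the Iwasawa algebra $\Lambda(G)$. Under this antiequivalence, $\scC(G,\Q_{p})$ corresponds to the rank-one free module $\Lambda(G)$; so injectivity in $\modaG$ translates to projectivity in the category of finitely generated $\Lambda(G)$-modules, where the free modules provide enough projectives for trivial reasons.

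For part (1) I would argue as follows. Given $V\in\modaG$, its dual $V'$ is finitely generated over $\Lambda(G)$ and hence admits a surjection $\Lambda(G)^{N}\twoheadrightarrow V'$. Dualising back produces a (strong) injection $V\hookrightarrow \scC(G,\Q_{p})^{N}$. Since $\scC(G,\Q_{p})$ is already recorded to be injective in $\modaG$, and finite direct sums of injectives in an abelian category are injective, $\scC(G,\Q_{p})^{N}$ is injective in $\modaG$; so $V$ embeds into an injective, and $\modaG$ has enough injectives.

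For part (2) I would combine two observations. First, any injective object $I$ of $\modaG$ is a direct summand of some $\scC(G,\Q_{p})^{N}$ in $\modaG$: apply the injectivity of $I$ to the embedding $I\hookrightarrow \scC(G,\Q_{p})^{N}$ produced in part (1) to split it. Second, $\scC(G,\Q_{p})^{N}$ is canonically $\scC(G,\Q_{p}^{N})$, and the preceding discussion records that $\scC(G,W)$ is relatively injective in $\modG$ for every $W\in\modG$; hence $\scC(G,\Q_{p})^{N}$ is relatively injective in $\modG$. It then remains only to check that relative injectivity passes to direct summands, which is a one-line diagram chase: given a strong injection $A\hookrightarrow B$ and a morphism $f:A\to I$, compose with the inclusion $\iota:I\hookrightarrow \scC(G,\Q_{p})^{N}$, extend $\iota\circ f$ over $B$ using relative injectivity of the target, and then postcompose with the splitting projection $\scC(G,\Q_{p})^{N}\twoheadrightarrow I$.

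The only step that is not entirely formal is the claim that the dualised morphism $V\hookrightarrow\scC(G,\Q_{p})^{N}$ appearing in part (1) is a \emph{strong} monomorphism in $\modG$, rather than merely a continuous injection. This is the one point where the topological subtleties of $\modG$ enter: one needs to know that surjections of finitely generated $\Lambda(G)$-modules correspond under the antiequivalence to closed embeddings of admissible Banach representations that admit closed complements. This is built into the way the duality has been set up and ultimately rests on the open mapping theorem for Banach spaces, so it should be treatable as a black-box consequence of the antiequivalence rather than requiring a new technical input.
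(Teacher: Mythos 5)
Your proof is correct and follows exactly the route the paper intends: it invokes the antiequivalence with finitely generated Iwasawa-algebra modules to reduce injectivity to projectivity of free modules, embeds any admissible $V$ into a finite power of $\scC(G,\Q_p)$, and then passes relative injectivity along direct summands. The paper states these facts as ``Consequently...''' just before the lemma without giving a separate proof, and your write-up is a faithful elaboration of that implicit argument, including the correct identification of the only genuinely topological point (that the dualised embedding is a closed split inclusion).
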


The lemma implies that any injective resolution in $\modaG$
is a relatively injective resolution in $\modG$.
As a consequence of this we have the following:

\begin{lemma}
	Let $\bF:\modG\to \bC$ be a left exact functor.
	For any continuous admissible representation
	$V$ of $G$, we have $(R^{n}_{\rel}\bF)(V)=(R^{n}(\bF|_{\modaG}))(V)$.
\end{lemma}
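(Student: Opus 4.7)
The plan is to exhibit a single resolution of $V$ that simultaneously computes both derived functors. By part (1) of the preceding lemma, $\modaG$ has enough injectives, so I may choose an injective resolution
\[
0 \to V \to I^{0} \to I^{1} \to \cdots
\]
in $\modaG$. By definition, this computes $(R^{n}(\bF|_{\modaG}))(V) = H^{n}(\bF(I^{\bullet}))$.

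It remains to show that the same complex computes $(R^{n}_{\rel}\bF)(V)$, i.e.\ that $V \to I^{\bullet}$ is a relatively injective resolution in $\modG$. This breaks into two requirements: each $I^{n}$ must be relatively injective in $\modG$, and the augmented complex must be an exact sequence of strong morphisms. The first requirement is exactly part (2) of the preceding lemma. Exactness in $\modG$ (ignoring topology) is inherited because $\modaG$ is a full abelian subcategory of $\modG$ in which kernels and cokernels agree with those in $\modG$.

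The main obstacle is verifying that the differentials are \emph{strong} morphisms in $\modG$, i.e.\ that the short exact sequences $0 \to \ker d^{n} \to I^{n} \to \im d^{n} \to 0$ are topologically split and induce homeomorphisms $I^{n}/\ker d^{n} \to \im d^{n}$. Since every injective object of $\modaG$ is a direct summand of $\scC(G,\Q_{p})^{N}$, all terms of the resolution are admissible Banach representations, so the open mapping theorem handles the homeomorphism condition (this is the ``property (2) from (1)'' remark in the excerpt). For the topological direct-summand condition, the strategy is to pass through Schneider--Teitelbaum duality: a short exact sequence of admissible Banach representations corresponds to a short exact sequence of finitely generated modules over the Iwasawa algebra, and a continuous $\Q_{p}$-linear splitting on the Banach side can be produced by dualizing a module-theoretic section (or directly, by exhibiting the kernel as the dual of a quotient of finitely generated modules and invoking orthonormalizability of $p$-adic Banach spaces).

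Once the resolution is confirmed to be relatively injective in $\modG$, both $(R^{n}_{\rel}\bF)(V)$ and $(R^{n}(\bF|_{\modaG}))(V)$ are by definition the cohomology of the common complex $\bF(I^{\bullet})$, proving the asserted equality. The statement is functorial in $V$ because any morphism of admissible representations extends to a map of injective resolutions, well-defined up to chain homotopy in either category.
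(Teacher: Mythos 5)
Your overall architecture matches the paper exactly: take an injective resolution in $\modaG$, observe by the preceding lemma that the terms are relatively injective in $\modG$ and that $\modaG$ has enough injectives, and conclude both derived functors are computed by the same complex. That is precisely the paper's (one-line) argument, and you are right to notice that it silently requires the differentials to be strong morphisms.

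Your verification of strongness contains one genuine misstep, however. The suggestion to ``dualize a module-theoretic section'' does not work: the Iwasawa algebra is far from semisimple, and short exact sequences of finitely generated modules over it need not split, so the section you propose to dualize will typically not exist. (Even if it existed, dualizing would produce a $G$-equivariant splitting, which is much stronger than the relative formalism asks for.) The correct and much simpler observation is your parenthetical one: every $p$-adic Banach space over the discretely valued field $\Q_p$ is isomorphic to $c_0(I)$, and for such spaces every closed subspace is topologically complemented by a continuous (not $G$-equivariant) linear projection. Since the kernels and images of morphisms between admissible Banach representations are closed, this alone yields the topological direct-summand condition; no passage through Schneider--Teitelbaum duality is needed or helpful. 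With that one clause replaced, the proof is a fully spelled-out version of the paper's argument.
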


In particular, the restrictions of $H^{n}_{\cts}(G,-)$ and $\Ext^{\bullet}_{G}(W,-)$
to $\modaG$ are derived functors, rather than relative derived functors.

\subsection{Smooth representations}
By a smooth representation of $G$,
we shall mean an abstract vector space $V$ over $\Q_{p}$,
equipped with an action of $G$ by linear maps,
such that every vector in $V$ has open stabilizer in $G$.
We shall write $\SmoG$ for the category of smooth
representations of $G$.
The morphisms are the $G$-equivariant linear maps.

Since every vector in a smooth representation has finite $G$-orbit,
it is clear that the irreducible smooth representations of $G$ are finite dimensional.
Every finite dimensional smooth representation is semi-simple.
Furthermore, every smooth representation of $G$ is an inductive limit
of finite dimensional smooth representations.
It follows that every smooth representation is a sum of finite dimensional irreducible
smooth representations.
Every short exact sequence in $\SmoG$ splits and every object is injective.

\subsection{}

For objects $W$ of $\modG$ and $X$ of $\SmoG$, we define an object
$X\otimes W$ of $\modG$.
As a vector space, this is the algebraic tensor product over $\Q_{p}$.
The $G$-action is on both $X$ and $W$.
To define the topology on $X\otimes W$, we
 identify $X\otimes W$ with the inductive limit of the spaces $F\otimes W$,
 where $F$ is a finite dimensional subspace of $X$.
By choosing a basis of $F$, we may identify $F\otimes W$ with  $W^{\dim F}$,
 and we equip $W^{\dim F}$ with the product topology.
The corresponding topology on $F\otimes W$ is independent of the choice of basis.
Equivalently, a subset $U\subset X\otimes W$ is a neighbourhood of $0$
if and only if for every $x\in X$ there is a neighbourhood $U_{x}$ of $0$ in $W$,
such that $x\otimes U_{x}\subset U$.

\begin{lemma}
	\label{lemmaexacttensor}
	The functor $-\otimes W:\SmoG\to\modG$
	 takes short exact sequences in $\SmoG$
	 to strong short exact sequences in $\modG$.
\end{lemma}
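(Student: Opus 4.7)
My plan is to use the fact recalled just above that every short exact sequence in $\SmoG$ splits, reducing the lemma to showing that the tensor product topology respects a direct sum decomposition.

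Given a short exact sequence $0\to X' \to X \to X'' \to 0$ in $\SmoG$, I would first fix a $G$-equivariant linear section $s:X''\to X$, yielding an isomorphism $X\cong X'\oplus X''$ in $\SmoG$. Applying the functor $-\otimes W$ then produces an algebraic, $G$-equivariant decomposition $X\otimes W = (X'\otimes W)\oplus(X''\otimes W)$ in $\modG$, and it suffices to verify that this is a topological direct sum: such a decomposition automatically yields a continuous linear splitting of the tensored sequence (even a $G$-equivariant one), which is precisely what is needed for strongness.

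The continuity of the inclusions $X'\otimes W,\ X''\otimes W\hookrightarrow X\otimes W$ is clear from the definition of the topology as an inductive limit over finite-dimensional subspaces, since any finite-dimensional subspace of $X'$ or $X''$ is also one of $X$. The substantive step is the continuity of the projections $\pi':X\otimes W\to X'\otimes W$ and $\pi'':X\otimes W\to X''\otimes W$. To verify continuity of $\pi'$, let $U\subset X'\otimes W$ be a neighbourhood of $0$, so that by the pointwise characterization of the topology there is, for each $x'\in X'$, a neighbourhood $U_{x'}$ of $0$ in $W$ with $x'\otimes U_{x'}\subset U$. Given any $x = x'+x''\in X$, I would set $V_x := U_{x'}$; then $\pi'(x\otimes V_x) = x'\otimes U_{x'}\subset U$, so $x\otimes V_x\subset (\pi')^{-1}(U)$, which is exactly the defining condition for $(\pi')^{-1}(U)$ to be a neighbourhood of $0$ in $X\otimes W$.

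The main obstacle I anticipate is precisely this check: the tensor product topology is specified pointwise using arbitrary vectors of $X$ rather than basis elements, so one must verify compatibility of this definition with the algebraic decomposition. Once $\pi'$ and $\pi''$ are known to be continuous, the continuous bijection $(X'\otimes W)\oplus(X''\otimes W)\to X\otimes W$ given by addition has continuous inverse $(\pi',\pi'')$ and is therefore a homeomorphism of topological vector spaces. This provides the desired splitting and confirms that $0\to X'\otimes W\to X\otimes W\to X''\otimes W\to 0$ is strong short exact in $\modG$.
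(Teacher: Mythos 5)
Your argument is correct and follows exactly the route the paper takes: since every short exact sequence in $\SmoG$ splits, one gets an algebraic decomposition $B\otimes W = (A\otimes W)\oplus(C\otimes W)$ and must check it is a topological direct sum. The paper dispatches this last step with only ``This follows because $B = A\oplus C$''; you have supplied the omitted verification that the projections are continuous, which is the one substantive point, and your check via the pointwise characterisation of the topology on $X\otimes W$ is exactly the right thing to do.
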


\begin{proof}
Suppose we have an exact sequence in $\SmoG$:
\begin{equation}
	\label{ses}
	0 \to A \to B \to C \to 0.
\end{equation}
Tensoring with $W$, we obviously still have an exact sequence of vector spaces
and $G$-equivariant linear maps.
The issue here is to show that $B\otimes W$ is the topological direct sum
of $A\otimes W$ and $C\otimes W$.
This follows because $B=A\oplus C$.
\end{proof}

\subsection{}
For objects $V,W$ of $\modG$, we use the notation
$$
	\Hom_{G-\st}(V,W)
	=
	\limdr\Hom_{G_{r}}(V,W).
$$
We regard this space as an abstract vector space over $\Q_{p}$;
it is an object of $\SmoG$ with the action of $G$ by conjugation.

\begin{lemma}
	\label{lemmasix}
	The functor $\Hom_{G-\st}(W,-):\modG\to \SmoG$
	 is a right-adjoint of the functor $W\otimes -:\SmoG\to\modG$.
	Consequently, we have:
	\begin{enumerate}
		\item
		$\Hom_{G-\st}(W,-)$ is left-exact;
		\item
		$\Hom_{G-\st}(W,-)$ takes relative injectives to injectives;
		\item
		$\Hom_{G-\st}(W,-)$ takes injective objects of $\modaG$
		to injective objects of $\SmoG$.
	\end{enumerate}
\end{lemma}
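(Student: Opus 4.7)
The plan is to establish the adjunction isomorphism
\[
\Hom_{\modG}(W\otimes X,\,V)\;\cong\;\Hom_{\SmoG}\!\big(X,\,\Hom_{G-\st}(W,V)\big)
\]
natural in $X\in\SmoG$ and $V\in\modG$, and then deduce (1)--(3) by standard formal manipulations. For the forward map, I send $\phi$ to $\tilde\phi:X\to\Hom_{G-\st}(W,V)$ defined by $\tilde\phi(x)(w)=\phi(w\otimes x)$. Continuity of each $\tilde\phi(x):W\to V$ is immediate from the inductive-limit description of the topology on $X\otimes W$ given just before Lemma \ref{lemmaexacttensor}. If $x$ has stabilizer containing $G_r$, then $G$-equivariance of $\phi$ forces $\tilde\phi(x)\in\Hom_{G_r}(W,V)$, so $\tilde\phi(x)$ lies in the filtered colimit $\Hom_{G-\st}(W,V)$ and is fixed by $G_r$ under the conjugation action; together with a direct check of $G$-equivariance of $\tilde\phi$ itself, this shows $\tilde\phi$ is a morphism in $\SmoG$. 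The inverse sends $\psi$ to the map $w\otimes x\mapsto\psi(x)(w)$, and continuity on all of $X\otimes W$ follows from the open-neighbourhood-of-$0$ criterion recorded in the same paragraph.

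For (1), a strong short exact sequence $0\to V'\to V\to V''\to 0$ in $\modG$ splits in the category of topological vector spaces, so $\Hom_{G_r}(W,-)$ remains left-exact at each level $r$; passing to the filtered colimit over $r$ then yields left-exactness of $\Hom_{G-\st}(W,-)$. For (3), let $I$ be injective in $\modaG$ and take any short exact sequence $0\to A\to B\to C\to 0$ in $\SmoG$. Applying $W\otimes-$ gives a strongly exact sequence in $\modG$ by Lemma \ref{lemmaexacttensor}; since injective objects of $\modaG$ are relatively injective in $\modG$ by the preceding lemma, the functor $\Hom_{\modG}(W\otimes-,I)$ stays exact on this sequence. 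The adjunction then identifies it with $\Hom_{\SmoG}(-,\Hom_{G-\st}(W,I))$, proving that $\Hom_{G-\st}(W,I)$ is injective in $\SmoG$. Consequence (2) is the same argument with $I$ an arbitrary relative injective of $\modG$. These last two conclusions are in fact automatic from the observation in \S3.3 that every object of $\SmoG$ is injective, but the derivation via adjunction and Lemma \ref{lemmaexacttensor} is the one relevant for the Grothendieck spectral sequence argument downstream.

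The main technical point is the topology-sensitive half of the adjunction: a $G$-equivariant linear map $\Phi:W\otimes X\to V$ is continuous for the inductive-limit topology on $W\otimes X$ if and only if each component $w\mapsto\Phi(w\otimes x)$ is continuous $W\to V$. This equivalence is precisely the content of the neighbourhood-of-$0$ description $x\otimes U_x\subset U$ recorded just before Lemma \ref{lemmaexacttensor}; once it is in hand, naturality in $X$ and $V$ and the formal deductions in the second paragraph amount to routine bookkeeping.
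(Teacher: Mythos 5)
Your proposal is correct and takes essentially the same route as the paper.  The paper establishes the adjunction by noting that $\Hom_{G-\st}(W,V)$ consists of the smooth vectors of $\Hom_{\cts}(W,V)$ and then reducing $\Hom_{\Q_{p}}(X,-)$ to a projective limit over finite-dimensional subspaces $F\subset X$; your explicit unit/counit maps and the claim that continuity of $\Phi$ on $X\otimes W$ is equivalent to continuity of each slice $w\mapsto\Phi(w\otimes x)$ are just an unwound form of that reduction, since the inductive-limit topology makes $F\otimes W\cong W^{\dim F}$ with the product topology.  Two remarks worth keeping in mind: the paper states (1)--(3) as formal consequences without proof, and your derivation of (2) and (3) via Lemma~\ref{lemmaexacttensor} and relative injectivity is the standard one and correct; you are also right that (2) and (3) are immediate from the fact that every object of $\SmoG$ is injective, though calling the adjunction-based derivation ``the one relevant downstream'' overstates things --- for the Grothendieck spectral sequence one only needs the conclusion, not the route.
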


\begin{proof}
Let $X$ be a smooth $G$-module, and let $V$ and $W$ be continuous
$G$-modules.
We need to check that
$$
	\Hom_{G}(X,\Hom_{G-\st}(W,V))
	=
	\Hom_{G-\cts}(X\otimes W,V).
$$
We first note that $\Hom_{G-\st}(W,V)$ is the space of smooth
vectors in $\Hom_{\cts}(W,V)$.
Since all vectors in $X$ are smooth, we have
$$
	\Hom_{G}(X,\Hom_{G-\st}(W,V))
	=
	\Hom_{\Q_{p}}(X,\Hom_{\cts}(W,V))^{G},
$$
where we are regarding $\Hom_{\cts}(W,V)$ as an abstract vector space.
With our choice of topology on $X\otimes W$, we have:
\begin{eqnarray*}
	\Hom_{\Q_{p}}(X,\Hom_{\cts}(W,V))
	&=&
	\limp{F}\Hom_{\Q_{p}}(F,\Hom_{\cts}(W,V))
	=
	\limp{F}\Hom_{\cts}(F\otimes W,V)\\
	&=&
	\Hom_{\cts}(X\otimes W,V).
\end{eqnarray*}
Here $F$ runs over finite dimensional subspaces of $X$.
\end{proof}

We write $\Ext_{G-\st}^{\bullet}(W,-)$ for the relative derived functors
of the functor $\Hom_{G-\st}(W,-):\modG\to\SmoG$.
These are smooth $G$-modules,
and may be calculated as follows:
$$
	\Ext^{\bullet}_{G-\st}(W,V)
	=
	\limdr
	\Ext^{\bullet}_{G_{r}}(W,V).
$$

\subsection{Simplicial sheafs}
\def\modQp{\mathbf{mod}_{\Q_{p}}}
\def\modQpG{\mathbf{mod}_{\Q_{p}G}}

Let $\cY_{0}$ be an abstract finite simplicial complex.
For our purposes, this means $\cY_{0}$
 is a set of non-empty subsets (the simplexes) of a finite set (the set of vertices),
 such that if $A$ is in $\cY_{0}$ then every non-empty subset of $A$ is in
 $\cY_{0}$.
We regard $\cY_{0}$ as a topological space, in which the
closed sets are precisely the subcomplexes of $\cY_{0}$.
Given an abelian category $\bC$, we write $\sh(\bC/\cY_{0})$
for the category of $\bC$-valued sheafs on $\cY_{0}$.
Note that since $\cY_{0}$ is a finite topological space, it is not
necessary for $\bC$ to be closed under taking direct limits.
A standard argument shows that if $\bC$ has enough injectives
then so does $\sh(\bC/\cY_{0})$.
We shall write $\Gamma_{\cY_{0}}^{\bC}$ for the global sections
functor from $\sh(\bC/\cY_{0})$ to $\bC$.

\begin{lemma}
	Let $\bC$ be an abelian category.
	The functor $\Gamma_{\cY_{0}}^{\bC}:\sh(\bC/\cY_{0})\to\bC$
	 is a right adjoint of the constant sheaf functor $V\mapsto \underline V$.
	Consequently,
	\begin{enumerate}
		\item
		$\Gamma_{\cY_{0}}^{\bC}$ is left-exact;
		\item
		$\Gamma_{\cY_{0}}^{\bC}$ takes injectives in $\sh(\bC/\cY_{0})$
		 to injectives in $\bC$.
	\end{enumerate}
\end{lemma}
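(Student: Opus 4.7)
The plan is to exhibit the adjunction explicitly and then deduce (1) and (2) by formal arguments. I construct a natural bijection
\[
\Hom_{\sh(\bC/\cY_{0})}(\underline{V}, \cF) \cong \Hom_{\bC}(V, \Gamma_{\cY_{0}}^{\bC}(\cF))
\]
as follows. In one direction, send a sheaf morphism $\phi : \underline{V} \to \cF$ to the composite $V \to \underline{V}(\cY_{0}) \xrightarrow{\phi(\cY_{0})} \cF(\cY_{0})$, where the first arrow is the diagonal $V \to V^{\pi_{0}(\cY_{0})}$ coming from the fact that any constant function on $\cY_{0}$ is locally constant. In the reverse direction, given $f : V \to \cF(\cY_{0})$, define a morphism $\underline{V} \to \cF$ by specifying, for each open $U \subset \cY_{0}$ and each connected component $U_{\alpha}$ of $U$, that the $\alpha$-coordinate of a section in $\underline{V}(U) = V^{\pi_{0}(U)}$ be sent into $\cF(U_{\alpha})$ by composing $f$ with the restriction $\cF(\cY_{0}) \to \cF(U_{\alpha})$; the sheaf condition on $\cF$ glues these local data to a section of $\cF(U)$. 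Checking that the two assignments are mutually inverse and natural in $V$ and $\cF$ is routine.

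Consequence (1) is then immediate, since any right adjoint preserves finite limits and is hence left-exact. For (2), I invoke the standard principle that a right adjoint preserves injectives whenever its left adjoint is exact, so I need to verify that the constant sheaf functor $V \mapsto \underline{V}$ is exact. This follows by a stalkwise argument: on the finite space $\cY_{0}$, every point $\sigma$ has a smallest open neighborhood (its open star, which is connected), so the stalk functor $\cF \mapsto \cF_{\sigma}$ is simply evaluation of $\cF$ on that neighborhood and is therefore exact; since the stalk of $\underline{V}$ at each point is canonically $V$, the functor $V \mapsto \underline{V}$ is exact.

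The main obstacle is that $\bC$ is an arbitrary abelian category which need not admit arbitrary (filtered) colimits. The finiteness of $\cY_{0}$ is essential here: it replaces the filtered colimits in the usual definition of stalks and of sheafification by evaluation on a smallest open neighborhood, which makes both the concrete description of $\underline{V}(U)$ used to set up the adjunction and the stalkwise exactness argument valid in the abstract setting with no further hypotheses on $\bC$.
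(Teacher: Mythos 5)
Your proof is correct; the paper declares this lemma ``standard'' and gives no argument, and what you have written is precisely the standard one the author had in mind: exhibit the unit/counit of the adjunction $\underline{(-)}\dashv\Gamma_{\cY_0}^{\bC}$ via connected components, and deduce preservation of injectives from exactness of the left adjoint, which you check stalkwise. You have also correctly isolated the one non-trivial point, namely that because $\cY_0$ is a finite (Alexandrov) space every simplex has a smallest open neighbourhood which is moreover connected, so stalks are finite-limit computations and the whole argument goes through with no completeness or cocompleteness hypothesis on $\bC$.
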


\begin{proof}
	This is standard.
\end{proof}

The categories $\sh(\SmoG/\cY_{0})$ and $\sh(\modaG/\cY_{0})$ have
 enough injectives.
However $\sh(\modG/\cY_{0})$ does not have enough injectives.
We get around this problem by exactly the same method as before.
A morphism $f:\cF\to\cG$ in $\sh(\modG/\cY_{0})$ is called strong
if for each point $y\in \cY_{0}$, the exact sequence
is stalks
$$
	0\to (\ker f)_{y} \to \cF_{y}\to \cG_{y} \to (\coker f)_{y} \to 0
$$
is split exact (i.e. chain homotopic to zero) in the category of
topological vector spaces.
One defines relatively injective objects and relatively injective
resolutions exactly as above.
One may use relatively injective resolutions to define relative derived functors.
It is important to check that these relative derived functors agree on the
full subcategory $\sh(\modaG/\cY_{0})$ with the usual derived functors.
To see this, note that every object of $\sh(\modaG/\cY_{0})$ has an injective
 resolution in $\sh(\modaG/\cY_{0})$
 by sheafs which are relatively injective in $\sh(\modG/\cY_{0})$.
As a consequence of this, we have:

\begin{lemma}
	If $\cF$ is injective in $\sh(\modaG/\cY_{0})$
	then $\cF$ is relatively injective in $\sh(\modG/\cY_{0})$.
\end{lemma}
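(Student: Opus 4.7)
The plan is to replay in the sheaf category the strategy that was just used to deduce the corresponding statement for modules (injectives in $\modaG$ are relatively injective in $\modG$). The key observation is that a supply of injectives in $\sh(\modaG/\cY_{0})$ can be built by pushing forward injective objects of $\modaG$ from points of $\cY_{0}$. For each $y\in\cY_{0}$ let $i_{y}\colon\{y\}\hookrightarrow\cY_{0}$ be the inclusion, and for each injective object $I$ of $\modaG$ form the skyscraper sheaf $i_{y*}I$. For any sheaf $\cG$ (valued in $\modaG$ or more generally in $\modG$) the pullback-pushforward adjunction gives
$$
\Hom_{\sh(\modG/\cY_{0})}(\cG,i_{y*}I)\;=\;\Hom_{\modG}(\cG_{y},I),
$$
so $i_{y*}I$ is injective in $\sh(\modaG/\cY_{0})$ whenever $I$ is injective in $\modaG$.

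Next I would check that $i_{y*}I$ is relatively injective in $\sh(\modG/\cY_{0})$. Given a strong injection $\cA\hookrightarrow\cB$ in $\sh(\modG/\cY_{0})$ and a morphism $\cA\to i_{y*}I$, the stalk-wise definition of ``strong'' supplies a strong injection $\cA_{y}\hookrightarrow\cB_{y}$ in $\modG$; the adjoint map $\cA_{y}\to I$ extends to $\cB_{y}\to I$ by the previous lemma, and the adjunction converts this extension back into $\cB\to i_{y*}I$. Both relative injectivity in $\sh(\modG/\cY_{0})$ and injectivity in $\sh(\modaG/\cY_{0})$ are preserved under finite products (lift componentwise) and under retracts (conjugate a lift by the section and retraction), so any product $\prod_{y\in\cY_{0}}i_{y*}I_{y}$ with $I_{y}$ injective in $\modaG$ is simultaneously injective in $\sh(\modaG/\cY_{0})$ and relatively injective in $\sh(\modG/\cY_{0})$.

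Finally I would embed the given injective $\cF$ of $\sh(\modaG/\cY_{0})$ into such a product. Since $\modaG$ has enough injectives, for each $y$ pick an embedding $\cF_{y}\hookrightarrow I_{y}$ with $I_{y}$ injective in $\modaG$; together they yield a morphism $\cF\to\prod_{y}i_{y*}I_{y}$ whose stalk at $y'$ contains, as the $y=y'$ coordinate, the chosen mono $\cF_{y'}\hookrightarrow I_{y'}$, hence is itself a monomorphism on every stalk. Injectivity of $\cF$ in $\sh(\modaG/\cY_{0})$ then splits the embedding and realises $\cF$ as a retract of a sheaf known to be relatively injective in $\sh(\modG/\cY_{0})$, so $\cF$ is relatively injective there as well. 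The point requiring the most care is verifying the skyscraper adjunction inside $\sh(\modG/\cY_{0})$ together with the fact that stalk-wise strongness passes to the stalk $\cA_{y}\hookrightarrow\cB_{y}$ as a strong map in $\modG$; once these bookkeeping items are dispatched, the rest of the argument is purely formal.
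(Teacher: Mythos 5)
Your proof is correct and follows the same underlying idea as the paper's: produce objects that are simultaneously injective in $\sh(\modaG/\cY_{0})$ and relatively injective in $\sh(\modG/\cY_{0})$, embed $\cF$ into one such, and split the embedding to exhibit $\cF$ as a retract. The paper compresses this into a one-line remark resting on the unproved assertion that every object of $\sh(\modaG/\cY_{0})$ admits an injective resolution by sheafs that are relatively injective in $\sh(\modG/\cY_{0})$; your skyscraper construction $\prod_{y}i_{y*}I_{y}$, together with the observation that the $i_{y}^{*}\dashv i_{y*}$ adjunction transports the module-level statement (injectives of $\modaG$ are relatively injective in $\modG$) to the sheaf categories because strongness is checked on stalks, is precisely a proof of that missing assertion. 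So your argument is somewhat more detailed than the paper's and supplies a concrete verification that the paper leaves implicit; the only point to keep in mind is that the stalk of $i_{y*}I$ at $y'$ is $I$ whenever $y$ lies in the minimal open neighbourhood of $y'$ (not only when $y'=y$), which is harmless for the argument since the diagonal factor still makes the map into the product a stalkwise monomorphism.
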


\begin{proof}
As explained above, the relative derived functors of
 $\Hom_{\sh(\modG/\cY_{0})}(-,\cF)$ restrict to the 
 derived functors of $\Hom_{\sh(\modaG/\cY_{0})}(-,\cF)$.
\end{proof}

Let $W$ be an object of $\modG$.
We write $\cHom_{G-\st}(W,-)$ for the
functor from $\sh(\modG/\cY_{0})$ to $\sh(\SmoG/\cY_{0})$,
defined by
$$
	\cHom_{G-\st}(W,\cF)(U)
	=
	\cHom_{G-\st}(W,\cF(U)),
	\quad
	U\subset \cY_{0} \hbox{ open.}
$$
Similarly we write $-\otimes \uW$ for the functor
$\sh(\SmoG/\cY_{0})\to \sh(\modG/\cY_{0})$, defined by
$$
	(\cF\otimes \uW)(U)
	=
	\cF(U)\otimes W.
$$

\begin{lemma}
	Let $W$ be a continuous $G$-module.
	\begin{enumerate}
		\item
		The functor $\cHom_{G-\st}(W,-):\sh(\modG/\cY_{0}) \to \sh(\SmoG/\cY_{0})$
		is a right-adjoint of $-\otimes \uW$.
		\item
		$-\otimes \uW:\sh(\SmoG/\cY_{0})\to \sh(\modG/\cY_{0})$
		takes exact sequences to strong exact sequences.
		\item
		$\cHom_{G-\st}(W,-)$ is left exact and takes relative injectives to injectives.
	\end{enumerate}
\end{lemma}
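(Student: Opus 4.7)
The three parts are directly parallel to Lemma \ref{lemmasix}, so my plan is to sheafify the arguments there, reducing each claim to its counterpart in $\modG$ and $\SmoG$.

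For part (1), I would first unpack definitions: a morphism $\cF \otimes \uW \to \cG$ in $\sh(\modG/\cY_{0})$ is a family of continuous $G$-equivariant maps $\cF(U) \otimes W \to \cG(U)$ compatible with restriction. By Lemma \ref{lemmasix} applied sectionwise, this family corresponds bijectively to a family of $G$-equivariant linear maps $\cF(U) \to \Hom_{G-\st}(W, \cG(U))$. Naturality of the pointwise adjunction under the restriction maps of $\cF$ and $\cG$ shows that such a family assembles into a morphism of sheaves $\cF \to \cHom_{G-\st}(W,\cG)$, and that the resulting bijection is natural in both arguments.

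For part (2), since $\cY_{0}$ is a finite simplicial complex its topology is Alexandrov, so sheaves are determined by their stalks, and the notions of both exactness and strongness in $\sh(\modG/\cY_{0})$ are defined stalkwise. The stalk $(\cF \otimes \uW)_{y}$ agrees with $\cF_{y} \otimes W$, because both are computed from the smallest open set containing $y$ and the tensor product with the fixed vector space $W$ commutes with the (finite) colimits involved. Consequently, applying $-\otimes \uW$ to a short exact sequence in $\sh(\SmoG/\cY_{0})$ yields at each stalk a sequence which is strongly short exact by Lemma \ref{lemmaexacttensor}, giving a strong short exact sequence of sheaves.

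Part (3) is then formal. Left-exactness of $\cHom_{G-\st}(W,-)$ follows from (1), since right adjoints are left exact. For preservation of relative injectivity, let $\cI$ be relatively injective in $\sh(\modG/\cY_{0})$ and let $0 \to \cF_{1} \to \cF_{2}$ be a monomorphism in $\sh(\SmoG/\cY_{0})$. By (2), the induced map $\cF_{1} \otimes \uW \to \cF_{2} \otimes \uW$ is a strong monomorphism in $\sh(\modG/\cY_{0})$, so any morphism $\cF_{1} \otimes \uW \to \cI$ extends to $\cF_{2} \otimes \uW \to \cI$; transporting across the adjunction of (1) gives the required extension of any map $\cF_{1} \to \cHom_{G-\st}(W,\cI)$ to $\cF_{2} \to \cHom_{G-\st}(W,\cI)$. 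I expect the only subtle point to be the verification in part (2) that the sectionwise tensor product genuinely defines a sheaf whose stalks are $\cF_{y}\otimes W$; this is precisely where the combinatorial Alexandrov structure of $\cY_{0}$ does the work, since otherwise one would have to sheafify and worry about continuity of the resulting topologies on stalks.
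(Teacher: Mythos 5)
Your argument follows the paper's proof essentially step for step: part (1) by reducing to sections and invoking Lemma \ref{lemmasix} together with compatibility with restriction, part (2) by reducing to stalks (where the Alexandrov structure of the finite space $\cY_{0}$ makes stalks coincide with sections over minimal open sets) and invoking Lemma \ref{lemmaexacttensor}, and part (3) formally from the adjunction and strongness. Your closing remark about verifying that the sectionwise tensor defines a sheaf is a reasonable extra check — it holds because tensoring with $W$ commutes with the finite products appearing in the sheaf axiom on a finite space — but this is a point the paper passes over silently rather than a departure from its method.
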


\begin{proof}
Let $\cX$ be a sheaf of smooth $G$-modules on $\cY_{0}$
and let $\cV$ be a sheaf of continuous $G$-modules on $\cY_{0}$.
For part (1), we must show that
$$
	\Hom_{\sh(\SmoG/\cY_{0})}(\cX,\cHom_{G-\st}(W, \cV))
	=
	\Hom_{\sh(\modG/\cY_{0})}(\cX\otimes\uW, \cV).
$$
It is sufficient to verify that on each open set $U\subset \cY_{0}$, we have:
$$
	\Hom_{\SmoG}(\cX(U),\Hom_{G-\st}(W, \cV(U)))
	=
	\Hom_{\modG}(\cX(U)\otimes W, \cV(U)),
$$
and that these isomorphisms are compatible with restriction maps.
This follows from Lemma \ref{lemmasix}.
For part (2), recall that exactness of a sequence of sheafs
 is equivalent to exactness of the corresponding sequence of stalks
 at each point.
This allows us to reduce (2) to Lemma \ref{lemmaexacttensor}.
Part (3) follows from (1) and (2).
\end{proof}

\subsection{}
Let $Y_{0}$ be a geometric realization of the abstract simplicial complex $\cY_{0}$
and $Z_{0}$ a geometric realization of a subcomplex $\cZ_{0}$.
If $\bC$ is an abelian category, which is closed under direct limits,
then we may form the category $\sh(\bC/Y_{0})$.
The map $\pi:Y_{0}\to \cY_{0}$, which takes a point of $Y_{0}$ to the smallest
simplex containing that point, is continuous.

\begin{lemma}
	\label{lemmaten}
	Let $\bC$ be an abelian category with enough injectives,
	 which is closed under direct limits.
	Let $\cF$ be an object of $\sh(\bC/\cY_{0})$.
	If $\cF$ is a locally constant sheaf then
	$$
		H^{\bullet}(Y_{0},Z_{0},\cF)
		=
		H^{\bullet}(\cY_{0},\cZ_{0},\pi_{*}\cF).
	$$
\end{lemma}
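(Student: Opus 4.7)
The plan is to deploy the Leray (Grothendieck) spectral sequence for the continuous map of pairs $\pi:(Y_0,Z_0)\to(\cY_0,\cZ_0)$, taking note that $\pi^{-1}(\cZ_0)=Z_0$:
$$
E_2^{p,q} = H^p\bigl(\cY_0,\cZ_0;\, R^q\pi_*\cF\bigr) \;\Longrightarrow\; H^{p+q}(Y_0,Z_0;\,\cF).
$$
Its existence in the setting of an abelian category $\bC$ follows from the hypotheses: $\sh(\bC/Y_0)$ has enough injectives (using closure under direct limits), and the exact left adjoint $\pi^{-1}$ of $\pi_*$ forces $\pi_*$ to send injectives to injectives, hence to $\Gamma$-acyclics. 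Given this spectral sequence, the lemma reduces to showing $R^q\pi_*\cF=0$ for all $q>0$.

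For this vanishing I would exploit the fact that $\cY_0$ is a finite topological space with a canonical basis of opens: for each simplex $\sigma$, the set $U_\sigma=\{\tau\in\cY_0:\sigma\subseteq\tau\}$ is the smallest open containing $\sigma$, and the family $\{U_\sigma\}$ is cofinal among neighborhoods of $\sigma$. Hence the stalks of $R^q\pi_*\cF$ at $\sigma$ are computed by
$$
(R^q\pi_*\cF)_\sigma \;=\; H^q\bigl(\pi^{-1}(U_\sigma),\,\cF|_{\pi^{-1}(U_\sigma)}\bigr).
$$
But $\pi^{-1}(U_\sigma)$ is the open star of the barycenter of $\sigma$ in $Y_0$, which is contractible. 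Local constancy of $\cF$ then implies that $\cF|_{\pi^{-1}(U_\sigma)}$ is a constant sheaf $\underline V$ with $V\in\bC$, whose higher sheaf cohomology on a contractible polyhedral subspace vanishes. The spectral sequence therefore degenerates on the row $q=0$ and yields the claimed identification, after one checks that $\pi_*\cF$ recovers the sheaf on $\cY_0$ indicated in the statement (its stalk at $\sigma$ being the constant value $V$ on the contractible open star, compatibly with the simplicial structure).

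The main obstacle I anticipate is justifying the acyclicity of a constant sheaf $\underline V$ on a contractible open subset of $Y_0$ uniformly in the abstract category $\bC$. For $\bC=\Ab$ this is standard, following, e.g., from Leray's theorem applied to a sufficiently fine simplicial cover as in the proof of Theorem \ref{reinterpret1}; for general $\bC$ one reduces to the abelian-group case by testing against functors $\Hom_{\bC}(-,I)$ for $I$ injective in $\bC$, or by repeating the acyclic-cover argument internally using the closure of $\bC$ under direct limits. A secondary bookkeeping task is that the entire argument be compatible with the pair $(Z_0,\cZ_0)$; this is automatic because $\pi|_{Z_0}$ satisfies the same hypotheses and $\pi^{-1}$ commutes with the formation of the subsheafs $\cF^Z$ used to define relative cohomology, so the five lemma applied to the long exact sequences of both pairs (or equivalently the spectral sequence of the pair) transports the absolute isomorphism to the relative one.
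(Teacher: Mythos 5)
Your proposal is correct and follows essentially the same route as the paper's proof: both invoke the Leray spectral sequence for $\pi$, reduce to vanishing of $R^{q}\pi_{*}$ for $q>0$, and verify that vanishing stalkwise by using the minimal open neighborhood in the finite topological space $\cY_{0}$ together with contractibility of $\pi^{-1}\cU$ (and of $\pi^{-1}(\cU\cap\cZ_{0})$) and the local constancy of $\cF$. The only cosmetic difference is that the paper runs the spectral sequence directly on the relative sheaf $\cF^{Z_{0}}$ so the pair is handled in a single step, whereas you treat the absolute case first and then transfer to the relative one via the long exact sequences of the two pairs.
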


\begin{proof}
	This follows from the spectral sequence of the map $\pi:Y_{0}\to \cY_{0}$:
	$$
		H^{p}(\cY_{0},(R^{q}\pi_{*})(\cF^{Z_{0}}))
		\implies
		H^{p+q}(Y_{0},Z_{0},\cF).
	$$
	We need to check that $(R^{q}\pi_{*})(\cF^{Z_{0}})=0$ for $q>0$.
	This amounts to checking for every $y\in\cY_{0}$ that
	$H^{q}(\pi^{-1}\cU,\pi^{-1}(\cU\cap\cZ_{0}),\cF)=0$,
	where $\cU$ is the intersection
	of the (finitely many) open subsets of $\cY_{0}$ containing $y$.
	Since both $\pi^{-1}\cU$ and $\pi^{-1}(\cU\cap\cZ_{0})$ are contractible (or empty)
	and $\cF$ is locally constant, the result follows.
\end{proof}

\begin{theorem}
	\label{myspecseq}
	Let $W$ be a continuous admissible $G$-module.
	Then there is a spectral sequence of smooth $G$-modules:
	$$
		\Ext^{p}_{G-\st}(W,H^{q}(\cY_{0},\cZ_{0},\scC(G,\Q_{p})))
		\implies
		\limdr
		H^{p+q}(Y_{r},Z_{r},\Hom_{\cts}(W,\Q_{p})).
	$$
\end{theorem}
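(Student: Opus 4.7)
The plan is to realize the spectral sequence as a Grothendieck spectral sequence for the composition
\[
\sh(\modG/\cY_{0}) \xrightarrow{F_{1}} \modG \xrightarrow{F_{2}} \SmoG,
\]
where $F_{1}(\cF) = \Gamma(\cY_{0}, \cZ_{0}, \cF)$ and $F_{2}(V) = \Hom_{G-\st}(W, V)$, applied to the sheaf $\scC(G, \Q_{p})$ of continuous admissible $G$-modules on $\cY_{0}$. Both functors are left exact by the lemmas of this section, so the $E_{2}$ page takes the desired form by definition: $R^{q} F_{1}(\scC(G, \Q_{p})) = H^{q}(\cY_{0}, \cZ_{0}, \scC(G, \Q_{p}))$ and $R^{p} F_{2} = \Ext^{p}_{G-\st}(W, -)$.

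To justify the Grothendieck spectral sequence in this relative setting, I would first resolve $\scC(G, \Q_{p})$ by an injective resolution $\cI^{\bullet}$ inside the subcategory $\sh(\modaG/\cY_{0})$; by the preceding lemma in this section, each $\cI^{n}$ is relatively injective in $\sh(\modG/\cY_{0})$, so this furnishes a relative injective resolution. Since, restricted to $\sh(\modaG/\cY_{0})$, the functor $F_{1}$ is a right adjoint of the (exact) constant sheaf functor, each $F_{1}(\cI^{n})$ is injective in $\modaG$. By Lemma \ref{lemmasix}(3), $F_{2}$ sends these to injective (hence $F_{2}$-acyclic) objects in $\SmoG$, establishing the hypothesis of the Grothendieck spectral sequence.

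It remains to identify the abutment $R^{p+q}(F_{2} \circ F_{1})(\scC(G, \Q_{p}))$ with $\limdr H^{p+q}(Y_{r}, Z_{r}, \Hom_{\cts}(W, \Q_{p}))$. Since $\limdr$ is exact and commutes with the formation of derived functors, it suffices to show, for each fixed $r$, that
\[
R^{n}\bigl(\Hom_{G_{r}}(W, -) \circ F_{1}\bigr)(\scC(G, \Q_{p})) = H^{n}(Y_{r}, Z_{r}, \Hom_{\cts}(W, \Q_{p})).
\]
This is a Shapiro-type identification: via Frobenius reciprocity the sheaf $\cHom_{G_{r}}(W, \scC(G, \Q_{p}))$ on $\cY_{0}$ is the local system on $\cY_{0}$ whose pullback to $Y_{r}$ under the Galois cover $Y_{r} \to \cY_{0}$ (with deck group $G/G_{r}$) is precisely $\Hom_{\cts}(W, \Q_{p})$. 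Combined with Lemma \ref{lemmaten}, this yields the desired identification.

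The principal obstacle is this last Shapiro-type identification. Concretely, one must check that $\Hom_{G_{r}}(W, -)$ commutes with $\Gamma(\cY_{0}, \cZ_{0}, -)$ when applied to relatively injective resolutions in such a way that the resulting local system on $\cY_{0}$ has the expected pullback to $Y_{r}$, carefully tracking the interaction between the left $\Gamma$-action and the right $G$-action on $\scC(G, \Q_{p})$ used to define the local system structures of \S\ref{interpret2sec}. All other steps are formal consequences of the lemmas already established in this section.
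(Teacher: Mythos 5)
Your setup of the relative Grothendieck spectral sequence is correct, and matches the paper's: resolving $\scC(G,\Q_p)^{\cZ_0}$ by injectives of $\sh(\modaG/\cY_0)$, observing these are relatively injective in $\sh(\modG/\cY_0)$, and invoking Lemma \ref{lemmasix}(3) to see that the intermediate objects are $\Hom_{G-\st}(W,-)$-acyclic. The $E_2$ term is thus identified correctly. The problem is the abutment. You reduce to showing $R^{n}(\Hom_{G_r}(W,-)\circ\Gamma(\cY_0,\cZ_0,-))(\scC(G,\Q_p)) = H^n(Y_r,Z_r,\Hom_{\cts}(W,\Q_p))$ and then declare this to be ``a Shapiro-type identification,'' conceding in your closing paragraph that this is the principal obstacle. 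That concession is apt: you have not proved the identification, and the step you gesture at --- interchanging $\Hom_{G_r}(W,-)$ with global sections ``carefully tracking the interaction of the $\Gamma$- and $G$-actions'' --- does not by itself yield the result, because even after interchanging you must know that $\cHom_{G_r}(W,\cI^\bullet)$ remains a resolution of $\cHom_{G_r}(W,\scC(G,\Q_p)^{\cZ_0})$.

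What you are missing is the device the paper uses: the composition $\Gamma_{\cY_0}\circ\cHom_{G-\st}(W,-) = \Hom_{G-\st}(W,-)\circ\Gamma_{\cY_0}$ can be derived in \emph{either} order, giving two Grothendieck spectral sequences converging to the same abutment. The second one, $H^p(\cY_0,\cExt^q_{G-\st}(W,\scC(G,\Q_p)^{\cZ_0}))$, degenerates because the stalks of $\scC(G,\Q_p)^{\cZ_0}$ are injective in $\modaG$, so $\cExt^q_{G-\st}(W,\scC(G,\Q_p)^{\cZ_0})=0$ for $q>0$. This identifies $R^n\Gamma_\sm(\scC(G,\Q_p)^{\cZ_0})$ with $H^n(\cY_0,\cHom_{G-\st}(W,\scC(G,\Q_p)^{\cZ_0}))$; only \emph{after} that does Lemma \ref{lemmaten}, the (degenerate) Leray spectral sequence for $Y_r\to Y_0$, and Frobenius reciprocity close the argument. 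Without the degeneration of the second spectral sequence, the Shapiro step cannot be carried out, and your proof remains incomplete at exactly the point you flagged.
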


\begin{proof}
Write $\cExt^{\bullet}_{G-\st}(W,-)$ for the derived functors
of $\cHom_{G-\st}(W,-)$, and write $\Gamma_{\cY_{0}}$ for the global section functor.
The functor $\Gamma_{\cY_{0}}$ is also left-exact and preserves injectives,
 and we write $H^{\bullet}(\cY_{0},-)$ for its derived functors.
There is an equality of functors from $\sh(\modG/\cY_{0})$ to $\SmoG$:
$$
	\Gamma_{\cY_{0}}\circ \cHom_{G-\st}(W,-)
	=
	\Hom_{G-\st}( W,-)\circ \Gamma_{\cY_{0}}.
$$
We write $\Gamma_{\sm}$ for this composition.
Hence there are Grothendieck spectral sequences:
$$
	\Ext^{p}_{G-\st}(W,H^{q}(\cY_{0},\scC(G,\Q_{p})^{\cZ_{0}}))
	\implies
	(R^{p+q}\Gamma_{\sm})(\scC(G,\Q_{p})^{\cZ_{0}}),
$$
$$
	H^{p}(\cY_{0},\cExt^{q}_{G-\st}(W, \scC(G,\Q_{p})^{\cZ_{0}}))
	\implies
	(R^{p+q}\Gamma_{\sm})(\scC(G,\Q_{p})^{\cZ_{0}}).
$$
The terms from the first spectral sequence are those
 from the statement of the theorem.
To prove the theorem, we must show that
 $(R^{p}\Gamma_{\smooth})(\scC(G,\Q_{p})^{\cZ_{0}})$
 is the classical cohomology with values in $\Hom_{\cts}(W,\Q_{p})$.

Since the stalks of $\scC(G,\Q_{p})^{\cZ_{0}}$ are injective objects of
$\modaG$, it follows that the extension groups
$\cExt^{q}_{G-\st}(W,\scC(G,\Q_{p})^{\cZ_{0}})$ are zero for $q>0$.
Hence the second spectral sequence degenerates,
and we have
$$
	(R^{p}\Gamma_{\sm})(\scC(G,\Q_{p})^{\cZ_{0}})
	=
	H^{p}(\cY_{0},\cHom_{G-\st}(W,\scC(G,\Q_{p})^{\cZ_{0}})).
$$
By Lemma \ref{lemmaten}, we have
$$
	H^{p}(\cY_{0},\cHom_{G-\st}(W,\scC(G,\Q_{p})^{\cZ_{0}}))
	=
	H^{p}(Y_{0},Z_{0}, \cHom_{G-\st}(W,\scC(G,\Q_{p}))).
$$
Since direct limits are exact, we have
$$
	H^{p}(Y_{0},Z_{0}, \cHom_{G-\st}(W,\scC(G,\Q_{p})))
	=
	\limdr
	H^{p}(Y_{0},Z_{0}, \cHom_{G_{r}}(W,\scC(G,\Q_{p}))).
$$
Let $f:Y_{r}\to Y_{0}$ be the projection map.
The spectral sequence of this map is:
$$
	H^{p}(Y_{0},Z_{0},H^{q}(G/G_{r}, \cHom_{G_{r}}(W,\scC(G_{r},\Q_{p}))))
	\implies
	H^{p+q}(Y_{r},Z_{r},\cHom_{G_{r}}(W,\scC(G_{r},\Q_{p}))).
$$
Since $G/G_{r}$ is a finite set, the spectral sequence degenerates and we have:
$$
	H^{p}(Y_{0},Z_{0},H^{0}(G/G_{r}, \cHom_{G_{r}}(W,\scC(G_{r},\Q_{p}))))
	=
	H^{p}(Y_{r},Z_{r},\cHom_{G_{r}}(W,\scC(G_{r},\Q_{p}))).
$$
As sheafs on $Y_{0}$ we have
$$
	H^{0}(G/G_{r}, \cHom_{G_{r}}(W,\scC(G_{r},\Q_{p})))
	=
	\cHom_{G_{r}}(W,\scC(G,\Q_{p})).
$$
By Frobenius reciprocity we have:
$$
	\cHom_{G_{r}}(W,\scC(G_{r},\Q_{p}))
	=
	\cHom_{\cts}(W,\Q_{p}).
$$
The result follows.
\end{proof}

\begin{remark}
Theorem \ref{myspecseq} gives Emerton's spectral sequence
 in the compactly supported case.
For the usual case, we take $Z_{0}$ to be empty.
Note that we have not required $W$ to be finite dimensional.
We also need no special properties of the simplicial complex $Y_{0}$.
If $W$ is finite dimensional, then we may replace
$\Ext^{\bullet}_{G-\st}$ by extension groups of the
representation of the Lie algebra of $G$ on the locally analytic vectors
(see Theorem 1.1.13 of \cite{emerton}).
\end{remark}

\section{Calculation of some cohomology groups}
\label{abstractsection}

In this section we again let $G$ be a compact $p$-adic analytic group,
and we have a neighbourhood filtration $G_{r}$
 by open normal subgroups.
We let $Y_{0}$ be a finite simplicial complex,
 together with a map for each connected component to $Y_{0}$,
 from the fundamental group of the connected component to $G$.
We let $\scC(G)$ be the corresponding local system of $G$-modules on $Y_{0}$.
We shall calculate $H^{\bullet}(Y_{0},\scC(G))$ in some cases.
In the case that $Y_{0}$ is connected, we write $\Gamma$ for the
fundamental group of $Y_{0}$.

\subsection{Induction}

For the purposes of the discussion here, a topological $G$-module will be
an abelian topological group $M$ with an action of $G$
 by a continuous map $G\times M\to M$.
We call $M$ smooth if it has the discrete topology.
For an open subgroup $H$ of finite index in $G$, we write $\ind_{H}^{G}$
for the induction functor from smooth $H$-modules to smooth $G$-modules,
or from topological $H$-modules to topological $G$-modules.

Suppose instead that $H$ is a closed subgroup, of possibly infinite index.
We shall describe two induction functors in this case.
Given a smooth $H$-module $V$, we define $\smoothind_{H}^{G}(V)$ to be the space
of smooth functions $f:G\to V$, such that $f(hg)=hf(g)$ for all $h\in H$.
The smooth induced module is a smooth $G$-module.

Given a continuous $H$-module $V$, we define $\hind_{H}^{G}V$ to be the space
of continuous functions $f:G\to V$ satisfying $f(hg)=hf(g)$ for all $h\in H$.
We give $\hind_{H}^{G}V$ the topology of uniform convergence.
It is a continuous $G$-module.
In fact $\smoothind$ is simply the restriction of $\hind$ to the category
of smooth modules, but it is convenient to distinguish between these two functors.

We now discuss how these three kinds of induction are related.

\begin{lemma}
	\label{dlimind}
	Let $H$ be a closed subgroup of $G$.
	Suppose we have an inductive system of $H$-modules $V_{r}$,
	such that for each $r$, $V_{r}$ is the inflation of a $H/(H\cap G_{r})$-module.
	By identifying $H/(H\cap G_{r})$ with $G_{r}H/G_{r}$, we also regard
	$V_{r}$ as a $G_{r}H$-module.
	Then we have an isomorphism of smooth $G$-modules:
	$$
		\limdr \ind_{G_{r}H}^{G} V_{r}
		=
		\smoothind_{H}^{G} \limdr V_{r}.
	$$
\end{lemma}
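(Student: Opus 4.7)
The plan is to construct a natural $G$-equivariant map
$$\Phi:\limdr \ind_{G_r H}^G V_r \;\longrightarrow\; \smoothind_H^G \limdr V_r$$
and show that it is an isomorphism. For each $r$, given $f:G\to V_r$ in $\ind_{G_r H}^G V_r$, define $\Phi_r(f):G\to V$ by composing with the structural map $\iota_r:V_r\to V=\limdr V_r$. Since the inclusion $H\subset G_r H$ means the covariance $f(kg)=kf(g)$ for $k\in G_r H$ restricts to $H$-covariance, and since $G_r$ acts trivially on $V_r$ (so each $f$ is right-$G_r$-invariant by normality of $G_r$ in $G$, giving a locally constant function), $\Phi_r(f)$ lies in $\smoothind_H^G V$. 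Compatibility with the transition maps $\ind_{G_r H}^G V_r \to \ind_{G_{r'} H}^G V_{r'}$ (for $r'\ge r$, using the inclusion $G_{r'} H\subset G_r H$ and the map $V_r\to V_{r'}$) is immediate, so the $\Phi_r$ assemble to $\Phi$, and $G$-equivariance is a direct check on the definition.

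For injectivity, suppose $f\in \ind_{G_r H}^G V_r$ satisfies $\Phi_r(f)=0$. Since $f$ factors through the finite quotient $G_r H\backslash G$, it takes only finitely many values in $V_r$, all of which map to $0$ in the direct limit $V$. Each such value vanishes already in some $V_{r'}$ with $r'\ge r$, and taking $r'$ large enough to work for all (finitely many) values shows $f$ becomes zero in $\ind_{G_{r'} H}^G V_{r'}$; hence $f=0$ in the direct limit.

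For surjectivity, take $\tilde f\in \smoothind_H^G V$. By compactness of $G$ and local constancy of $\tilde f$, it is right-$G_r$-invariant for some $r$ and takes finitely many values in $V$; enlarging $r$, we may assume these values all lie in the image of $\iota_r:V_r\to V$. Using normality of $G_r$ in $G$ together with $H$-covariance, one checks that $\tilde f$ is automatically $G_r H$-covariant: writing $k=g_r h$ with $g_r\in G_r$, $h\in H$, and using that $g_r$ acts trivially on $V_r$ (hence on the image of $V_r$ in $V$ for our purposes), one gets $\tilde f(kg)=h\tilde f(g)=k\tilde f(g)$. Now pick coset representatives $g_1,\ldots,g_n$ for $G_r H\backslash G$ and choose lifts $v_i\in V_r$ with $\iota_r(v_i)=\tilde f(g_i)$. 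Define $f(k g_i)=k v_i$ for $k\in G_r H$; this is well-defined on $G$, and the $G_r H$-covariance of the definition automatically produces a preimage in $\ind_{G_r H}^G V_r$. One verifies $\iota_r\circ f=\tilde f$ from the covariance of $\tilde f$ just proved, and right-$G_r$-invariance of $f$ follows from $g_i g_r=(g_i g_r g_i^{-1})g_i$ combined with the trivial $G_r$-action on $V_r$.

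The main obstacle is the surjectivity argument, and within it the verification that one can choose a \emph{coherent} lift of $\tilde f$ to $V_r$ for a single $r$: this requires simultaneously controlling the level at which $\tilde f$ is smooth, the level at which all its finitely many values lift, and the compatibility of these choices with the $G_r H$-covariance. The key technical input making this work is the normality of $G_r$ in $G$, which allows one to freely convert between left and right $G_r$-invariance.
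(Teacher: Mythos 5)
Your proof is correct and takes essentially the same approach as the paper: you identify elements of the direct limit with smooth functions $G\to\limdr V_r$ satisfying $H$-covariance, using normality of $G_r$ to convert between left- and right-$G_r$-invariance. The paper's proof is considerably terser, simply asserting this identification, whereas you spell out the injectivity and surjectivity of the comparison map in full; the key points (finiteness of the coset space $G_rH\backslash G$, factoring through a single level $V_r$, normality of $G_r$) are the same in both.
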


\begin{proof}
Let $x\in \limdr \ind_{G_{r}H}^{G} V_{r}$.
This means that for $r$ sufficiently large, $x$ is represented by
 a function $x_{r}:G\to \limdr V_{r}$, with the properties
\begin{enumerate}
	\item
	For $g\in G$ and $h\in H$, $x_{r}(hg)=h \cdot x_{r}(g)$;
	\item
	$x_{r}(gg')= x_{r}(g')$ for all $g\in G_{r}$ and $g\in G$.
\end{enumerate}
Only the second condition depends on $r$;
furthermore the union over all $r$ of such spaces of functions
 is the space of smooth functions with values in $\limdr V_{r}$
 satisfying the first condition.
\end{proof}

\begin{lemma}
	\label{indprojs}
	Let $H$ be a closed subgroup of $G$
	 and let $V_{s}$ be a projective system of smooth (discrete) $H$-modules.
	Then we have
	$$
		\limprojs \smoothind_{H}^{G}V_{s}
		=
		\hind_{H}^{G} \limprojs V_{s}.
	$$
\end{lemma}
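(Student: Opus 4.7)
The plan is to exhibit an explicit natural isomorphism between the two sides by constructing mutually inverse maps $\Phi$ and $\Psi$, and then to check they preserve the topology, the $G$-action, and the $H$-equivariance. The whole argument reduces to the standard fact that a map into $\limprojs V_{s}$ (with each $V_{s}$ discrete and the projective limit topology) is continuous if and only if each of its projections $G\to V_{s}$ is locally constant, which is precisely the smoothness condition appearing in the smooth induction on the left.

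First I would define $\Phi : \hind_{H}^{G}\limprojs V_{s} \to \limprojs \smoothind_{H}^{G}V_{s}$ by $f\mapsto (\pi_{s}\circ f)_{s}$, where $\pi_{s} : \limprojs V_{s}\to V_{s}$ is the canonical projection. Since $V_{s}$ is discrete, $\pi_{s}\circ f$ is continuous into a discrete target and hence locally constant, i.e.\ smooth; $H$-equivariance and compatibility with the transition maps of the projective system are both immediate. Conversely, I would define $\Psi$ by $\Psi((f_{s}))(g) = (f_{s}(g))_{s}\in\limprojs V_{s}$; the universal property of the projective limit makes this a well-defined function $G\to\limprojs V_{s}$, $H$-equivariance passes through pointwise, and continuity is the key observation: the preimage under $\Psi((f_{s}))$ of the basic open set $\pi_{s}^{-1}(\{v\})$ is $f_{s}^{-1}(\{v\})$, which is open because $f_{s}$ is locally constant.

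It is then immediate that $\Phi\circ\Psi$ and $\Psi\circ\Phi$ are the identity, so $\Phi$ and $\Psi$ are mutually inverse isomorphisms of $G$-modules under the right-translation action. To upgrade this to a homeomorphism I would match neighbourhood bases of $0$: a basic neighbourhood of $0$ in $\hind_{H}^{G}\limprojs V_{s}$ for the uniform-convergence topology has the form $\{f : f(G)\subset U\}$ with $U$ a neighbourhood of $0$ in $\limprojs V_{s}$; taking $U = \ker\pi_{s}$ translates under $\Phi$ into the requirement that the $s$-th coordinate of $\Phi(f)$ vanishes, which is the standard basic neighbourhood of $0$ in $\limprojs \smoothind_{H}^{G}V_{s}$. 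Since every neighbourhood of $0$ in $\limprojs V_{s}$ contains $\ker\pi_{s}$ for some $s$, the two bases agree. The only step requiring any care is the continuity of $\Psi((f_{s}))$ together with the explicit matching of topologies; once those are in place, everything else is a routine unpacking of definitions, and I do not expect any serious obstacle.
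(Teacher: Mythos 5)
Your argument is correct and supplies exactly the "easy to check" verification the paper leaves implicit: the mutually inverse maps $\Phi$ and $\Psi$, the continuity check via local constancy of each $f_s$, and the matching of uniform-convergence and projective-limit neighbourhood bases via $\ker\pi_s$ are all sound. The paper offers no written proof for this lemma, and your approach is the standard one it evidently has in mind.
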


\begin{proof}
This is easy to check.
\end{proof}

\begin{theorem}
	\label{excise}
	Let $H\subset G$ be an analytic subgroup.
	Assume that for each connected component of $Y_{0}$,
	the image in $G$ of the corresponding fundamental group
	is contained in $H$.
	Then there is an isomorphism of $G$-modules:
	$$
		H^{\bullet}(Y_{0},\scC(G))
		=
		\hind_{H}^{G}
		H^{\bullet}(Y_{0},\scC(H)).
	$$
\end{theorem}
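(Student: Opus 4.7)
The plan is to first exhibit a sheaf-level isomorphism $\scC(G) \cong \hind_H^G \scC(H)$ on $Y_{0}$, and then show that sheaf cohomology commutes with this induction.

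For the sheaf isomorphism, observe that both $\scC(G)$ and $\scC(H)$ carry a right-translation action of the corresponding group (defining their sheaf-of-modules structure), which commutes with the left-translation monodromy of $\Gamma$ (via $\Gamma \to H \subset G$) defining the local-system structure. Using the right-translation action of $H$ on $\scC(H)$ to form $\hind_H^G \scC(H)$, I will define $\Phi \colon \scC(G) \to \hind_H^G \scC(H)$ by $\Phi(\phi)(g)(x) = \phi(xg)$ for $\phi \in \scC(G)$, $g \in G$, $x \in H$. A direct check confirms that $\Phi$ is a bijection (with inverse $f \mapsto (g \mapsto f(g)(e))$), that it intertwines the right-translation action of $G$, and that it intertwines the left-translation monodromy of $\Gamma$; the induced-representation condition follows since both $\Phi(\phi)(hg)(x)$ and $(h \cdot \Phi(\phi)(g))(x)$ evaluate to $\phi(xhg)$.

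To compute cohomology, I reduce to finite level via the formalism of \S\ref{emerton-formal}. For each pair $(r, s)$, let $V_{r,s}$ and $W_{r,s}$ denote the local systems on $Y_{0}$ whose fibers are the finite sets of functions $G/G_{r} \to \Zps$ and $H/(H \cap G_{r}) \to \Zps$ respectively, with $\Gamma$ acting by left translation in each case. Emerton's Proposition~1.2.12 (applied as in the proof of Theorem~\ref{reinterpret1}) allows one to commute $\limprojs$ past sheaf cohomology, while $\limdr$ commutes with cohomology by exactness, giving
\[
 H^{\bullet}(Y_{0}, \scC(G)) \;=\; \limprojs \limdr H^{\bullet}(Y_{0}, V_{r,s}),
\]
and similarly for $\scC(H)$. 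At finite level, the coset decomposition $G/G_{r} = \bigsqcup_{\alpha} (HG_{r}/G_{r})\, g_{\alpha}$ produces a $\Gamma$-equivariant isomorphism of $G$-modules $V_{r,s} \cong \ind_{HG_{r}}^{G} W_{r,s}$ (valid because $\Gamma \subset H \subset HG_{r}$). Since $HG_{r}$ is open of finite index in $G$, the induced local system is a finite direct sum of copies of $W_{r,s}$, and sheaf cohomology commutes with this finite direct sum, yielding $H^{\bullet}(Y_{0}, V_{r,s}) = \ind_{HG_{r}}^{G} H^{\bullet}(Y_{0}, W_{r,s})$.

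Combining these ingredients and using Lemma~\ref{dlimind} to rewrite $\limdr\, \ind_{HG_{r}}^{G} = \smoothind_{H}^{G}\, \limdr$, followed by Lemma~\ref{indprojs} to rewrite $\limprojs\, \smoothind_{H}^{G} = \hind_{H}^{G}\, \limprojs$, gives the asserted isomorphism. The main obstacle I anticipate is the careful bookkeeping around translation conventions---the fact that, with the induction convention $f(hg) = h f(g)$, the correct $H$-action on $\scC(H)$ for forming $\hind_{H}^{G} \scC(H)$ is right translation, which commutes with the left-translation $\Gamma$-monodromy---along with the justification of commuting $\limprojs$ with sheaf cohomology, which rests on the admissibility framework of \cite{emerton}.
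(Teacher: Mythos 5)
Your proof is correct and takes essentially the same route as the paper: reduce to finite levels, observe that the level-$(r,s)$ cohomology is a finite induction from $HG_{r}$ to $G$, and then pass to the limits using Lemma~\ref{dlimind} and Lemma~\ref{indprojs} in exactly the same way. The paper phrases the finite-level step as $H^{\bullet}(Y_{r},\Z/p^{s}) = \ind_{HG_{r}}^{G} H^{\bullet}(X_{r},\Z/p^{s})$ where $X_{r}$ is the preimage in $Y_{r}$ of the identity coset in $(G_{r}H)\backslash G$, while you phrase it as $H^{\bullet}(Y_{0},V_{r,s}) = \ind_{HG_{r}}^{G} H^{\bullet}(Y_{0},W_{r,s})$ for local systems on the fixed base $Y_{0}$; these are the same computation. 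Your opening observation that there is already a $\Gamma$- and $G$-equivariant sheaf isomorphism $\scC(G) \cong \hind_{H}^{G}\scC(H)$ is a nice conceptual addition not made explicit in the paper, though your argument does not actually rest on it (the finite-level reduction re-derives it in limit form). One small presentational point: when you invoke ``Emerton's Proposition 1.2.12 to commute $\limprojs$ past sheaf cohomology,'' you are implicitly reprising the argument of Theorem~\ref{reinterpret1} (admissibility of the \v Cech complex of $\scC(G)$) rather than a black-box application of that proposition to sheaf cohomology directly; it would be cleaner to cite Theorem~\ref{reinterpret2} for the identification $H^{\bullet}(Y_{0},\scC(G)) = \limprojs\limdr H^{\bullet}(Y_{0},V_{r,s})$, as the paper effectively does.
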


\begin{proof}
We use Emerton's point of view.
The space $Y_{r}$ is given by
$$
	Y_{r}=\Gamma\backslash (Y^{\univ}\times G/G_{r}).
$$
There is a projection $Y_{r}\to (G_{r}H)\backslash G$ given by $\Gamma(y,g)\mapsto HG_{r}g$.
We write $X_{r}$ for the preimage of the identity coset $HG_{r}$.
This is given by
$$
	X_{r}
	=
	\Gamma\backslash (Y^{\univ}\times HG_{r}/G_{r}).
$$
Letting $H_{r}=H\cap G_{r}$, and identifying $HG_{r}/G_{r}$ with $H/H_{r}$,
we see that the sets $X_{r}$ form a projective system,
whose projective limit is equal to
$$
	X
	=
	\Gamma\backslash (Y^{\univ}\times H).
$$
Since the index $[G:G_{r}H]$ is finite, it is clear that
$$
	H^{\bullet}(Y_{r},\Z/p^{s})
	=
	\ind_{HG_{r}}^{G} H^{\bullet}(X_{r},\Z/p^{s}).
$$
Taking the limit over levels $G_{r}$, we obtain:
$$
	\limdr H^{\bullet}(Y_{r},\Z/p^{s})
	=
	\limdr
	\ind_{HG_{r}}^{G}
	H^{\bullet}(X_{r},\Z/p^{s}).
$$
From Lemma \ref{dlimind}, we have:
$$
	\limdr
	H^{\bullet}(Y_{r},\Z/p^{s})
	=
	\smoothind_{H}^{G}
	\limdr
	H^{\bullet}(X_{r},\Z/p^{s}).
$$
By Lemma \ref{indprojs} we have:
$$
	\limprojs
	\limdr
	H^{\bullet}(Y_{r},\Z/p^{s})
	=
	\hind_{H}^{G}
	\limprojs \limdr H^{\bullet}(X_{r},\Z/p^{s}).
$$
In Emerton's notation, this is:
$$
	\tH^{\bullet}(Y,\Z_{p})
	=
	\hind_{H}^{G}
	\tH^{\bullet}(X,\Z_{p}).
$$
The result follows from Theorems \ref{reinterpret1} and \ref{reinterpret2}.
\end{proof}

\begin{remark}
	The proof relies on the fact that $G$ and $H$ are $p$-adic analytic groups.
	The author does not know whether Theorem \ref{excise}
	 holds for profinite groups in general.
\end{remark}

\subsection{Cohomology of nil-manifolds}

For the next few results, it will be useful to recall
 that if a finite simplicial complex $Y_{0}$ is a $K(\Gamma,1)$-space
 then $\Gamma$ must be torsion-free and finitely presented,
 and $Y_{0}$ must be connected.

\begin{lemma}
	\label{abelian}
	Let $Y_{0}$ be a $K(\Gamma,1)$-space, where $\Gamma$ abelian.
	Assume that the map $\Gamma\to G$ identifies $G$ with the
	 $p$-adic completion of $\Gamma$.
	Then we have
	$$
		H^{n}(Y_{0},\scC(G))
		=
		\begin{cases}
			\Z_{p} & n=0,\\
			0 & n>0.
		\end{cases}
	$$
	The action of $G$ on these spaces is trivial.
\end{lemma}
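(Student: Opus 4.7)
The plan is to unwind $\tH^n(Y,\Z_p)$ layer by layer. By Corollary \ref{reinterpretusual} and Theorem \ref{reinterpret2},
\[
	H^n(Y_0,\scC(G))
	=
	\tH^n(Y,\Z_p)
	=
	\limprojs\limdr H^n(Y_r,\Z/p^s),
\]
so it suffices to compute the right hand side. Since $Y_0$ is a finite $K(\Gamma,1)$-space and $\Gamma$ is abelian, $\Gamma$ is finitely generated, torsion-free and abelian, hence $\Gamma=\Z^d$ for some $d$, and $G=\Z_p^d$. I would take the natural filtration $G_r=p^r\Z_p^d$ and set $\Gamma_r=\phi^{-1}(G_r)=p^r\Z^d$.

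Since $\Gamma$ is dense in $G$, the map $\Gamma/\Gamma_r\to G/G_r$ is an isomorphism, so the cover $Y_r=Y^{\univ}/\Gamma_r\to Y_0$ is a finite connected $K(\Gamma_r,1)$-space. The standard computation of the cohomology of $\Z^d$ as an exterior algebra then yields
\[
	H^n(Y_r,\Z/p^s)
	=
	H^n(\Gamma_r,\Z/p^s)
	=
	\wedge^n(\Z/p^s)^d.
\]

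The central point is the behaviour of the transition maps. For $r\geq r'$ the inclusion $p^r\Z^d\subset p^{r'}\Z^d$ sends the natural generator $p^re_i$ to $p^{r-r'}$ times the natural generator $p^{r'}e_i$; hence, in the corresponding dual bases, restriction on $H^1$ is multiplication by $p^{r-r'}$, and on $H^n$ it is multiplication by $p^{n(r-r')}$. For $n\geq 1$ every class is therefore annihilated once $r-r'\geq s$, giving $\limdr H^n(\Gamma_r,\Z/p^s)=0$; for $n=0$ the transition maps are the identity on $\Z/p^s$, so the direct limit is $\Z/p^s$. Taking $\limprojs$ yields $\Z_p$ in degree $0$ and $0$ in positive degrees, as claimed. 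The $G$-action on $\tH^0=\Z_p$ factors through its action on each $H^0(Y_r,\Z/p^s)=\Z/p^s$, which is trivial because $Y_r$ is connected and deck transformations act trivially on $H^0$.

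There is no really hard step; the only non-routine point is the coordinate computation of restriction on $H^1$. This can be bypassed cleanly by Shapiro's lemma: using $\Gamma/\Gamma_r\cong G/G_r$, the finite $\Gamma$-module $\scC(G/G_r,\Z/p^s)$ (with $\Gamma$ acting by left translation through $G$) is the induction $\Ind_{\Gamma_r}^{\Gamma}\Z/p^s$, so $H^n(\Gamma,\scC(G/G_r,\Z/p^s))=H^n(\Gamma_r,\Z/p^s)$, and the remaining direct and inverse limit argument proceeds as above.
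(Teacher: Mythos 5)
Your proof is correct and follows essentially the same route as the paper's: identify $\Gamma\cong\Z^{d}$, $G\cong\Z_{p}^{d}$, take the filtration $G_{r}=p^{r}\Z_{p}^{d}$, compute $H^{n}(\Gamma_{r},\Z/p^{s})$ via the exterior-algebra description, and pass to $\limdr$ then $\limprojs$. You are slightly more explicit than the paper about why the transition maps act by $p^{n(r-r')}$ on $H^{n}$ and about the triviality of the $G$-action, but the underlying argument is the same.
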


\begin{proof}
We have $\Gamma=\Z^{N}$ and $G=\Z_{p}^{N}$ for some $N$,
and $Y_{0}$ is homotopic to a torus.
Letting $G_{r}=p^{r}\Z_{p}^{N}$, it follows that
$Y_{r}$ is naturally a $K(p^{r}\Z^{N},1)$ space for every $r$.
Hence by Emerton's formula for $\tH^{\bullet}(Y,\Z_{p})$,
 we have:
$$
	H^{n}(Y_{0},\scC(G))
	=
	\limprojs \limdr
	H^{n}(Y_{r},\Z/p^{s})
	=
	\limprojs \limdr
	H^{n}(p^{r}\Z^{N},\Z/p^{s}).
$$
On the other hand, $H^{n}(\Z^{N},\Z/p^{s})=\Hom(\wedge^{n}(\Z^{N}),\Z/p^{s})$.
This implies
$$
	\limdr
	H^{n}(p^{r}\Z^{N},\Z/p^{s})
	=
	\begin{cases}
		\Z/p^{s}
		&
		n=0,\\
		0
		&
		n>0.
	\end{cases}
$$
The result follows.
\end{proof}

It will be convenient to have a more precise version of this result:

\begin{lemma}
	\label{abstractdefect}
	Let $Y_{0}$ be a $K(\Gamma,1)$-space,
	 where $\Gamma$ is abelian;
	 let $\hat\Gamma$ be the $p$-adic completion of $\Gamma$.
	Suppose that the induced map $\hat\Gamma\to G$ is surjective,
	and let $\Delta$ be the kernel of this map.
	Then we have
	$$
		H^{n}(Y_{0},\scC(G))
		=
		\Hom_{\Z_{p}}(\wedge^{n}_{\Z_{p}}\Delta,\Z_{p}).
	$$
	The action of $G$ is trivial.
\end{lemma}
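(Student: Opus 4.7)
The plan is to imitate the proof of Lemma \ref{abelian}, keeping track of the extra kernel $\Delta$. By Corollary \ref{reinterpretusual},
\[
	H^{n}(Y_{0},\scC(G))
	\;=\;
	\limprojs\limdr H^{n}(Y_{r},\Z/p^{s}).
\]
Since $\Gamma$ is finitely generated, torsion-free and abelian, $\Gamma\cong\Z^{N}$ for some $N$, and the cover $Y_{r}\to Y_{0}$ corresponds to the subgroup $\Gamma_{r}:=\ker(\Gamma\to G/G_{r})$, which is itself free abelian; moreover $Y_{r}$ is a $K(\Gamma_{r},1)$-space. Since every homomorphism to $\Z/p^{s}$ factors through the $p$-adic completion,
\[
	H^{n}(Y_{r},\Z/p^{s})
	\;=\;
	\Hom\bigl(\wedge^{n}\hat\Gamma_{r},\Z/p^{s}\bigr),
\]
where $\hat\Gamma_{r}$ is the preimage of $G_{r}$ under the surjection $\hat\Gamma\to G$.

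Next I would exploit the short exact sequence $0\to\Delta\to\hat\Gamma_{r}\to G_{r}\to 0$. Since $G$ is an abelian $p$-adic analytic group, it is an extension of a torsion-free $\Z_{p}$-module by a finite $p$-group; hence for $r\ge r_{0}$ large enough, $G_{r}$ is free over $\Z_{p}$ and the sequence splits (non-canonically) as $\Z_{p}$-modules, giving $\hat\Gamma_{r}\cong\Delta\oplus G_{r}$. The Koszul decomposition then yields
\[
	\wedge^{n}\hat\Gamma_{r}
	\;\cong\;
	\bigoplus_{i+j=n}\wedge^{i}\Delta\;\otimes\;\wedge^{j}G_{r},
\]
and dualising,
\[
	\Hom\bigl(\wedge^{n}\hat\Gamma_{r},\Z/p^{s}\bigr)
	\;\cong\;
	\bigoplus_{i+j=n}\Hom(\wedge^{i}\Delta,\Z/p^{s})\otimes\Hom(\wedge^{j}G_{r},\Z/p^{s}).
\]

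Finally, I would pass to the direct limit in $r$. The summand with $j=0$ is $\Hom(\wedge^{n}\Delta,\Z/p^{s})$, independent of $r$ and with identity transition maps, so it survives. For $j\ge 1$, the inclusion $G_{r+1}\hookrightarrow G_{r}$ becomes multiplication by a positive power of $p$ once we trivialise $G_{r}\cong\Z_{p}^{M}$; the induced transition maps on $\Hom(\wedge^{j}G_{r},\Z/p^{s})$ are therefore multiplication by a positive power of $p$ and vanish after finitely many steps, exactly as in Lemma \ref{abelian}. Hence $\limdr H^{n}(Y_{r},\Z/p^{s})=\Hom(\wedge^{n}\Delta,\Z/p^{s})$, and taking $\limprojs$ produces $\Hom_{\Z_{p}}(\wedge^{n}_{\Z_{p}}\Delta,\Z_{p})$; the $G$-action is trivial since these classes are pulled back from $Y_{0}$. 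The only technical points requiring care are the splitting $\hat\Gamma_{r}\cong\Delta\oplus G_{r}$ for $r$ large and the bookkeeping for the transition maps through the Koszul summands; neither should present a serious obstacle.
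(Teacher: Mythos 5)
Your approach is essentially the paper's: the paper reduces to the direct limit over $r$ of $\Hom(\wedge^n\hat\Gamma_r,\Z/p^s)$ (followed by the inverse limit over $s$) and dispatches the identification with $\Hom(\wedge^n\Delta,\Z/p^s)$ with the phrase ``one easily checks''; you are simply carrying out that check via a splitting $\hat\Gamma_r\cong\Delta\oplus G_r$ and the Koszul decomposition. Two small points of precision: the splittings must be chosen compatibly across levels (fix one $\Z_p$-linear section $\sigma:G_{r_0}\to\hat\Gamma_{r_0}$ and restrict it to each $G_r$ with $r\ge r_0$, so that the Koszul summands commute with the transition maps), and the inclusion $G_{r+1}\hookrightarrow G_r$ is not literally multiplication by a power of $p$ after choosing a basis---rather, $G_t\subset p^sG_r$ once $t$ is large enough, which already forces the restriction $\Hom(\wedge^jG_r,\Z/p^s)\to\Hom(\wedge^jG_t,\Z/p^s)$ to vanish for $j\ge 1$.
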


\begin{proof}
As in the proof of the previous result, we have:
$$
	H^{n}(Y_{0},\scC(G))
	=
	\limprojs \limdr
	H^{n}(\Gamma_{r},\Z/p^{s}),
$$
where $\Gamma_{r}$ is the preimage of $G_{r}$ in $\Gamma$.
Since $\Gamma_{r}$ is a finitely generated torsion-free abelian group, its
cohomology is as follows:
$$
	H^{n}(\Gamma_{r},\Z/p^{s})
	=
	\Hom(\wedge^{n}\Gamma_{r},\Z/p^{s}).
$$
This is clearly equal to $\Hom_{\Z_{p}}(\wedge^{n}_{\Z_{p}}\hat\Gamma_{r},\Z/p^{s})$,
where $\hat\Gamma_{r}$ is the preimage of $G_{r}$ in $\hat\Gamma$.
The kernel $\Delta$ is the intersection of the subgroups $\hat\Gamma_{r}$,
 and one easily checks that
$$
	\limdr
	\Hom_{\Z_{p}}(\wedge^{n}_{\Z_{p}}\hat\Gamma_{r},\Z/p^{s})
	=
	\Hom_{\Z_{p}}(\wedge^{n}_{\Z_{p}}\Delta,\Z/p^{s}).
$$
The result follows.
\end{proof}

\begin{theorem}
	\label{nilpotent}
	Let $Y_{0}$ be a $K(\Gamma,1)$ space,
	 where $\Gamma$ is a nilpotent group.
	Assume that the map $\Gamma\to G$ identifies $G$
	with the pro-$p$ completion of $\Gamma$.
	Then we have
	$$
		H^{n}(Y_{0},\scC(G))
		=
		\begin{cases}
			\Z_{p} & n=0,\\
			0 & n>0.
		\end{cases}
	$$
	The action of $G$ is trivial.
\end{theorem}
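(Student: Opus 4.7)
The plan is to proceed by induction on the nilpotency class of $\Gamma$. The base case, in which $\Gamma$ is abelian, is precisely Lemma \ref{abelian}: any finitely generated torsion-free abelian group is $\Z^a$, and for such groups the $p$-adic completion (as used in Lemma \ref{abelian}) coincides with the pro-$p$ completion (as used here).

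For the inductive step, let $Z$ be the last non-trivial term of the lower central series of $\Gamma$. Then $Z$ is central in $\Gamma$, is finitely generated and torsion-free (hence $\cong \Z^a$ for some $a\ge 1$), and $\Gamma/Z$ is torsion-free, finitely generated, and nilpotent of strictly smaller class (and thus admits its own finite $K(\Gamma/Z,1)$-space). Let $\bar Z\subseteq G$ be the closure of the image of $Z$; as a closed subgroup of the $p$-adic analytic group $G$ it is again analytic. Before running the main argument one must identify $\bar Z$ with the $p$-adic completion $\Z_p^a$ of $Z$, and identify $G/\bar Z$ with the pro-$p$ completion of $\Gamma/Z$. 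Both identifications amount to the exactness of pro-$p$ completion applied to the central extension $1\to Z\to\Gamma\to\Gamma/Z\to 1$, combined with residual $p$-finiteness of finitely generated torsion-free nilpotent groups. I expect this compatibility of pro-$p$ completion with central quotients to be the main obstacle of the argument.

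Granting this, I use Corollary \ref{groupcohomology} to rewrite $H^\bullet(Y_0,\scC(G))$ as group cohomology $H^\bullet(\Gamma,\scC(G))$, and run the Hochschild--Serre spectral sequence associated to $1\to Z\to\Gamma\to\Gamma/Z\to 1$:
$$E_2^{p,q} = H^p(\Gamma/Z, H^q(Z,\scC(G))) \Longrightarrow H^{p+q}(\Gamma,\scC(G)).$$
To compute the $E_2$-page, I invoke Corollary \ref{groupcohomology} again to reinterpret $H^q(Z,\scC(G))$ as $H^q(K(Z,1),\scC(G))$ in Emerton's framework, and then apply Theorem \ref{excise} with the analytic subgroup $H=\bar Z$, which contains the image of the fundamental group $Z$. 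This yields
$$H^q(Z,\scC(G)) \;=\; \hind_{\bar Z}^{G}\, H^q(K(Z,1),\scC(\bar Z)).$$
Since $Z=\Z^a$ and $\bar Z=\Z_p^a$ is its $p$-adic completion, Lemma \ref{abelian} shows that the inner cohomology is $\Z_p$ in degree $0$ and vanishes in positive degree. Hence $H^q(Z,\scC(G))=0$ for $q\ge 1$, and $H^0(Z,\scC(G))=\hind_{\bar Z}^{G}\Z_p=\scC(G/\bar Z)$.

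The spectral sequence therefore degenerates at $E_2$ to an isomorphism $H^n(\Gamma,\scC(G))\cong H^n(\Gamma/Z,\scC(G/\bar Z))$, and a direct check identifies the residual $\Gamma/Z$-action on $\scC(G/\bar Z)$ with left translation through the map $\Gamma/Z\to G/\bar Z$, realising $G/\bar Z$ as the pro-$p$ completion of $\Gamma/Z$. The inductive hypothesis applied to $\Gamma/Z$ then gives $\Z_p$ in degree zero and $0$ in positive degrees, with trivial $G$-action, completing the induction.
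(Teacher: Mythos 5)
Your overall strategy---peel off a central abelian subgroup $Z$, use Theorem \ref{excise} together with Lemma \ref{abelian} to show that $H^q(Z,\scC(G))$ is concentrated in degree $0$ where it equals $\scC(G/\bar Z)$, then run Hochschild--Serre and induct---is essentially the paper's own proof. The structural difference is the choice of central series: the paper climbs the upper central series, inducting on the subgroups $\Gamma_t$ and using that the quotients $\Gamma_t/\Gamma_{t-1}$ are abelian, whereas you induct on the nilpotency class by excising one central abelian subgroup.

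There is, however, a genuine gap in your specific choice $Z=\gamma_c(\Gamma)$, the last nontrivial term of the lower central series: you assert that $\Gamma/Z$ is torsion-free, and this is false in general. For instance, $\Gamma=\langle x,y,z \mid [x,y]=z^{2},\ [x,z]=[y,z]=1\rangle$ is finitely generated, torsion-free and nilpotent of class $2$, yet $\gamma_{2}(\Gamma)=\langle z^{2}\rangle$ and $\Gamma/\gamma_{2}(\Gamma)\cong\Z^{2}\times\Z/2\Z$ has $2$-torsion. Since your inductive hypothesis requires $\Gamma/Z$ to admit a finite $K(\Gamma/Z,1)$-space, hence to be torsion-free, the induction breaks down here. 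The repair is standard: take $Z$ to be the center $Z(\Gamma)$ (equivalently, the isolator of $\gamma_{c}(\Gamma)$). For a finitely generated torsion-free nilpotent group, $\Gamma/Z(\Gamma)$ is torsion-free---uniqueness of roots gives that $[g^{n},h]=1$ forces $[g,h]=1$---and has nilpotency class exactly $c-1$, so with this change your argument goes through and coincides in substance with the paper's choice, since $Z(\Gamma)$ is the first term of the upper central series. As to the ``main obstacle'' you flagged, namely that pro-$p$ completion should carry $1\to Z\to\Gamma\to\Gamma/Z\to 1$ to $1\to\bar Z\to G\to G/\bar Z\to 1$ with $\bar Z\cong\Z_{p}^{a}$: this does require justification, but the paper makes the analogous assertion for its upper central series without comment, so you are not behind the target on this point.
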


\begin{proof}
We consider the ascending central series:
$$
	1\subset
	\Gamma_{1}\subset
	\Gamma_{2}\subset
	\cdots \subset \Gamma_{N}=\Gamma,
$$
where $\Gamma_{i+1}/\Gamma_{i}$ is the centre of $\Gamma/\Gamma_{i}$.
We let $G_{i}$ be the pro-$p$ completion of $\Gamma_{i}$.
Each of the subquotients $\Gamma_{i+1}/\Gamma_{i}$ is a finitely
generated torsion-free abelian group, and its $p$-adic completion
is $G_{i+1}/G_{i}$.
We shall prove by induction on $t$ that
\begin{equation}
	\label{hypothesis}
	H^{n}(\Gamma_{t},\scC(G_{t}))
	=
	\begin{cases}
		\Z_{p}& n=0,\\
		0&n>0.
	\end{cases}
\end{equation}
If $t=1$ then $\Gamma_{t}$ is abelian,
 so the result follows from Lemma \ref{abelian}.
Assume that (\ref{hypothesis}) holds for $t-1$.
There is a Hochschild--Serre spectral sequence
$$
	H^{p}(\Gamma_{t} / \Gamma_{t-1}, H^{q}(\Gamma_{t-1}, \scC(G_{t})))
	\implies
	H^{p+q}(\Gamma_{t}, \scC(G_{t})).
$$
By Theorem \ref{excise}, we have 
$$
	H^{\bullet}(\Gamma_{t-1}, \scC(G_{t}))
	=
	\hind_{G_{t-1}}^{G_{t}} H^{\bullet}(\Gamma_{t-1},\scC(G_{t-1})),
$$
and hence by the inductive hypothesis, we have:
$$
	H^{q}(\Gamma_{t-1}, \scC(G_{t}))
	=
	\begin{cases}
		\scC( G_{t}/G_{t-1}) & q=0,\\
		0 & q>0.
	\end{cases}
$$
It follows that the spectral sequence has only one non-zero row,
so we have:
$$
	H^{\bullet}(\Gamma_{t}, \scC(G))
	=
	H^{\bullet}(\Gamma_{t} / \Gamma_{t-1}, \scC(G_{t}/G_{t-1})).
$$
On the other hand $G_{t}/G_{t-1}$ is the $p$-adic completion
of $\Gamma_{t}/\Gamma_{t-1}$, so Lemma \ref{abelian} gives:
$$
	H^{n}(\Gamma_{t} / \Gamma_{t-1}, \scC(G_{t}/G_{t-1}))
	=
	\begin{cases}
		\Z_{p} & n=0,\\
		0 & n>0.
	\end{cases}
$$
\end{proof}

\begin{theorem}
	\label{normalnilpotent}
	Let $Y_{0}$ be a $K(\Gamma,1)$ space.
	Let $N$ be an analytic, normal, nilpotent subgroup of $G$,
	and let $\Gamma_{N}$ be the preimage of $\Gamma$ in $N$.
	Assume that $N$ is the pro-$p$ completion of $\Gamma_{N}$.
	Then we have as $G$-modules:
	$$
		H^{\bullet}(\Gamma, \scC(G))
		=
		H^{\bullet}(\Gamma/\Gamma_{N},\scC(G/N)).
	$$
\end{theorem}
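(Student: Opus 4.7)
The plan is to run a Hochschild--Serre spectral sequence for the normal subgroup $\Gamma_{N}\triangleleft \Gamma$. Normality is automatic: $\Gamma_{N}$ is the preimage of the normal subgroup $N\triangleleft G$ under the map $\Gamma\to G$. Using Corollary \ref{groupcohomology} to pass freely between sheaf cohomology on $Y_{0}$ and group cohomology of $\Gamma$, the theorem will reduce to computing the $E_{2}$-page
$$
E_{2}^{p,q} = H^{p}\bigl(\Gamma/\Gamma_{N},\, H^{q}(\Gamma_{N},\scC(G))\bigr) \implies H^{p+q}(\Gamma,\scC(G)).
$$

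To compute $H^{\bullet}(\Gamma_{N},\scC(G))$, observe that the image of $\Gamma_{N}$ in $G$ is contained in $N$ by construction, so Theorem \ref{excise}, applied to a $K(\Gamma_{N},1)$-covering space of $Y_{0}$ (namely, the cover corresponding to the subgroup $\Gamma_{N}\subset\Gamma$), gives
$$
H^{\bullet}(\Gamma_{N},\scC(G)) \;=\; \hind_{N}^{G} H^{\bullet}(\Gamma_{N},\scC(N)).
$$
Since $N$ is by hypothesis the pro-$p$ completion of the nilpotent group $\Gamma_{N}$, Theorem \ref{nilpotent} applies and yields $H^{0}(\Gamma_{N},\scC(N))=\Z_{p}$ with $H^{q}(\Gamma_{N},\scC(N))=0$ for $q>0$, both with trivial action. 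Consequently
$$
H^{q}(\Gamma_{N},\scC(G)) \;=\;
\begin{cases}
\hind_{N}^{G}\Z_{p} = \scC(G/N) & q=0,\\
0 & q>0.
\end{cases}
$$

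Feeding this into the spectral sequence, only the bottom row survives, so the sequence degenerates and produces the desired isomorphism of $G$-modules
$$
H^{\bullet}(\Gamma,\scC(G)) \;=\; H^{\bullet}\bigl(\Gamma/\Gamma_{N},\scC(G/N)\bigr).
$$

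The main obstacle is a technical verification that Theorem \ref{nilpotent} really applies to $\Gamma_{N}$: one must check that the natural map $\Gamma_{N}\to N$ is injective (forced by $N$ being the pro-$p$ completion together with residual $p$-finiteness inherited from the nilpotent pro-$p$ group $N$), so that $\Gamma_{N}$ sits inside $N$ as a nilpotent group and admits a $K(\Gamma_{N},1)$-space usable in place of the finite simplicial complex of the setup. A secondary check is that the $\Gamma/\Gamma_{N}$-action on the bottom row of the spectral sequence, induced by Hochschild--Serre conjugation, matches the local-system structure on $\scC(G/N)$ coming from the embedding $\Gamma/\Gamma_{N}\hookrightarrow G/N$; this is a formal consequence of how $\hind_{N}^{G}\Z_{p}$ realises $\scC(G/N)$ as a right $G$-module.
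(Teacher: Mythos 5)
Your proposal is correct and follows essentially the same route as the paper: Hochschild--Serre for $\Gamma_{N}\triangleleft\Gamma$, then Theorem \ref{excise} to reduce $H^{\bullet}(\Gamma_{N},\scC(G))$ to $\hind_{N}^{G}H^{\bullet}(\Gamma_{N},\scC(N))$, then Theorem \ref{nilpotent} to kill everything in positive degree, leaving a single surviving row $\scC(G/N)$.
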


\begin{proof}
There is a spectral sequence
$$
	H^{p}(\Gamma/\Gamma_{N},H^{q}(\Gamma_{N},\scC(G))
	\implies
	H^{p+q}(\Gamma,\scC(G)).
$$
By Theorem \ref{excise}, we have
$$
	H^{\bullet}(\Gamma_{N},\scC(G))
	=
	\hind_{N}^{G}
	H^{\bullet}(\Gamma_{N},\scC(N)),
$$
and hence by the previous theorem we have
$$
	H^{q}(\Gamma_{N},\scC(G))
	=
	\begin{cases}
		\scC(G/N) & q=0,\\
		0 & q>0.
	\end{cases}
$$
The result follows.
\end{proof}

\subsection{First applications to arithmetic quotients}
\label{concretesection}

We begin now to apply our results to arithmetic quotients
as described in the introduction.
For simplicity we assume now that $k=\Q$.
Our methods extend to the general case; however the results are more difficult to state.
This is because one is forced to make systematic use of Lemma \ref{abstractdefect}
in place of Lemma \ref{abelian} and Theorem \ref{nilpotent}.
The results stated below remain correct if one replaces $\Q$
 by a number field $k$, in which $\gp$ is the only prime of $k$ above $p$.

For a linear algebraic group $\bG$ over $\Q$, and a compact open subgroup
$K_{f}\subset\bG(\A_{f})$, we shall use the notation
$$
	Y_{\bG}(K_{f})
	=
	\bG(\Q)\backslash \bG(\A) / K_{\infty}^{\circ}K_{f},
$$
where $K_{\infty}^{\circ}$ is a fixed maximal compact connected
subgroup of $\bG(\R)$.

\subsubsection{Unipotent groups}

We recall that a group $\bG/\Q$ is said to satisfy strong approximation
if $\bG(\Q)\bG(\R)$ is dense in $\bG(\A)$.

\begin{lemma}
	\label{strongapproximation}
	If $\bG$ satisfies strong approximation then $Y_{\bG}(K_{f})$
	is connected.
	If, in addition, $\Gamma(K_{f})$ is torsion-free then $Y(K_{f})$
	is a $K(\Gamma(K_{f}),1)$ space.
\end{lemma}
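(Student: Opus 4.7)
The plan is to use strong approximation to identify $Y_{\bG}(K_{f})$ with a quotient of the real symmetric space $\bG(\R)/K_{\infty}^{\circ}$ by a discrete arithmetic subgroup, and then invoke standard facts about contractibility and free group actions.

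First, I would unpack the strong approximation hypothesis: since $K_{f}$ is open in $\bG(\A_{f})$ and $\bG(\Q)\bG(\R)$ is dense in $\bG(\A)$, it follows that $\bG(\A_{f}) = \bG(\Q)K_{f}$. Hence every class in $\bG(\Q)\backslash\bG(\A)/K_{f}$ is represented by an element of the form $(g_{\infty},1)$, and the natural map $\bG(\R)\to Y_{\bG}(K_{f})$ is surjective. A short calculation of its fibres yields a homeomorphism
$$
Y_{\bG}(K_{f})\;\cong\;\Gamma(K_{f})\backslash\bG(\R)/K_{\infty}^{\circ},
$$
where $\Gamma(K_{f})=\bG(\Q)\cap K_{f}$ and $\bG(\Q)$ is embedded diagonally. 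Since $\bG(\R)/K_{\infty}^{\circ}$ is connected in the situations where strong approximation applies (in particular whenever $\bG(\R)$ is connected, as is the case for unipotent or semisimple simply connected groups over $\Q$), the connectedness of $Y_{\bG}(K_{f})$ is immediate.

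For the $K(\Gamma(K_{f}),1)$ assertion, I would combine the contractibility of $\bG(\R)/K_{\infty}^{\circ}$ with the freeness of the $\Gamma(K_{f})$-action on it. Contractibility holds for any linear algebraic group $\bG$ over $\R$: for reductive groups via the Cartan/Iwasawa decomposition of the symmetric space, for unipotent groups because $\bG(\R)$ is diffeomorphic to Euclidean space via $\exp$, and in general via a Levi decomposition. The action of $\Gamma(K_{f})$ is properly discontinuous because $\Gamma(K_{f})$ is discrete in $\bG(\R)$ and $K_{\infty}^{\circ}$ is compact. For freeness, the stabiliser of a coset $gK_{\infty}^{\circ}$ is $\Gamma(K_{f})\cap gK_{\infty}^{\circ}g^{-1}$, which is a discrete subgroup of the compact group $gK_{\infty}^{\circ}g^{-1}$, hence finite; the torsion-freeness of $\Gamma(K_{f})$ then forces it to be trivial. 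It follows that $\bG(\R)/K_{\infty}^{\circ}$ is the universal cover of $Y_{\bG}(K_{f})$, with deck transformation group $\Gamma(K_{f})$ acting on a contractible space, which is exactly the $K(\Gamma(K_{f}),1)$ property.

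The main technical point is verifying that the continuous bijection $\Gamma(K_{f})\backslash\bG(\R)/K_{\infty}^{\circ}\to Y_{\bG}(K_{f})$ of the first step is genuinely a homeomorphism. This is a standard consequence of the openness of $K_{f}$ in $\bG(\A_{f})$, which allows one to push local sections across the quotient and show that the inverse map is continuous. Once this is established, the remainder of the argument is a routine assembly of contractibility of the symmetric space, discreteness of arithmetic subgroups, and the torsion-freeness hypothesis.
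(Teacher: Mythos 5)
The paper gives no proof of this lemma beyond the remark that it is ``easy and well-known,'' so your argument cannot be compared against the paper's own proof, but it is the correct standard one. Your identification $Y_{\bG}(K_{f})\cong\Gamma(K_{f})\backslash\bG(\R)/K_{\infty}^{\circ}$ via strong approximation, the contractibility of $\bG(\R)/K_{\infty}^{\circ}$, and the freeness and proper discontinuity of the $\Gamma(K_{f})$-action are exactly what the author has in mind.

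One point where you hedge rather than argue is the connectedness of $\bG(\R)$: you note it holds ``in particular'' for unipotent and simply connected semisimple groups, but strong approximation is the standing hypothesis and one should say why it entails $\bG(\R)$ connected in general. This does follow: strong approximation forces $\bG$ to be connected as an algebraic group over $\Q$ with reductive quotient semisimple, simply connected and with no $\Q$-anisotropic simple factors. Over $\R$ the unipotent radical has contractible real points, and a simply connected semisimple group over $\R$ has connected real points (a theorem going back to Cartan/Chevalley); combining these via the Levi decomposition gives $\bG(\R)$ connected, and then $K_{\infty}^{\circ}=K_{\infty}$ and $\bG(\R)/K_{\infty}^{\circ}$ is contractible. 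With that filled in, your argument is complete and is what the paper intends.
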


\begin{proof}
This is easy and well-known.
\end{proof}

\begin{theorem}
	\label{unipotent}
	If $\bN/\Q$ is a unipotent group and
	$K^{p}$ (resp. $K_{p}$) is a compact open subgroup of
	$\bN(\A_{f}^{p})$ (resp. $\bN(\Q_{p})$) then
	$$
		H^{n}(Y_{\bN}(K_{p}K^{p}),\scC(K_{p}))
		=
		\begin{cases}
			\Z_{p} & n=0,\\
			0 & n>0.
		\end{cases}
	$$
\end{theorem}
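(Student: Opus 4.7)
The plan is to realize $Y_{\bN}(K_{p}K^{p})$ as a $K(\Gamma,1)$-space with $\Gamma$ a torsion-free finitely generated nilpotent group, and then to check that the natural map $\Gamma\to K_{p}$ identifies $K_{p}$ with the pro-$p$ completion of $\Gamma$, so that Theorem \ref{nilpotent} applies verbatim.

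First, since $\bN$ is unipotent over $\Q$ it satisfies strong approximation (with respect to the archimedean place), so by Lemma \ref{strongapproximation} the space $Y_{\bN}(K_{p}K^{p})$ is connected; its fundamental group is $\Gamma := \bN(\Q)\cap K_{p}K^{p}$, embedded diagonally in $\bN(\R)$. Since $\bN(\R)$ is contractible (via the polynomial exponential $\Lie(\bN)(\R)\to \bN(\R)$), the quotient $Y_{\bN}(K_{p}K^{p})\cong \Gamma\backslash \bN(\R)$ is a $K(\Gamma,1)$. The group $\Gamma$ is nilpotent because $\bN$ is, and torsion-free because the Mal'cev logarithm embeds any finitely generated subgroup of $\bN(\Q)$ into the $\Q$-vector space $\Lie(\bN)(\Q)$.

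Next I need to identify $K_{p}$ with the pro-$p$ completion $\widehat{\Gamma}^{(p)}$. Strong approximation also gives that $\bN(\Q)$ is dense in $\bN(\A_{f})$, hence $\Gamma$ is dense in $K_{p}K^{p}$ and in particular its image in $K_{p}$ is dense. Since $K_{p}$ is a torsion-free compact $p$-adic analytic group built by iterated extensions from compact open subgroups of $\Q_{p}$, it is pro-$p$, so there is a continuous surjection $\widehat{\Gamma}^{(p)}\twoheadrightarrow K_{p}$. To upgrade this to an isomorphism I invoke the congruence subgroup property for unipotent arithmetic groups: every finite-index subgroup of $\Gamma$ contains a congruence subgroup, equivalently $\widehat\Gamma$ coincides with the closure of $\Gamma$ in $\bN(\A_{f})$, which by density is $K_{p}\times K^{p}$. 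The factor $K^{p}$ is prime-to-$p$ (each compact open subgroup of $\bN(\Q_{\ell})$ is pro-$\ell$ for $\ell\ne p$), so passing to the maximal pro-$p$ quotient yields $\widehat{\Gamma}^{(p)}=K_{p}$. Theorem \ref{nilpotent} then produces the stated values, with trivial $K_{p}$-action.

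The step I expect to be the main obstacle is this last identification $\widehat{\Gamma}^{(p)}=K_{p}$. Without CSP one can still apply Theorem \ref{excise} to reduce to the closure $H$ of the image of $\Gamma$ in $K_{p}$ and invoke Theorem \ref{nilpotent} for $\scC(H)$; but then $H^{0}$ would come out as the space of continuous functions on $H\backslash K_{p}$ rather than $\Z_{p}$, and one needs precisely CSP (classical for unipotent groups, essentially Mal'cev theory) to conclude that $H=K_{p}$.
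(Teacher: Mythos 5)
Your proposal is correct and follows essentially the same route as the paper: realize $Y_{\bN}(K_f)$ as a $K(\Gamma,1)$ via strong approximation and torsion-freeness of $\bN(\A)$, identify $K_p$ with the pro-$p$ completion of $\Gamma$, and invoke Theorem \ref{nilpotent}. Your invocation of the congruence subgroup property for unipotent arithmetic groups is precisely what justifies the paper's terse assertion ``the pro-$p$ completion of $\Gamma(K_f)$ injects into $K_p$''; the paper leaves this unexplained, so your elaboration is a genuine and welcome clarification rather than a detour.
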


\begin{proof}
It is known that strong approximation holds for unipotent groups.
Furthermore $\bN(\A)$ is torsion-free.
It follows from Lemma \ref{strongapproximation} that $Y_{\bN}(K_{f})$ is a
$K(\Gamma_{\bN}(K_{f}),1)$-space.
The arithmetic group $\Gamma_{\bN}(K_{f})$ is nilpotent.
The pro-$p$ completion of $\Gamma(K_{f})$ injects into $K_{p}$.
By strong approximation, its image is the whole of $K_{p}$.
Thus $K_{p}$ is the pro-$p$ completion of $\Gamma_{\bN}(K_{p})$.
The result follows from Theorem \ref{nilpotent}.
\end{proof}

\subsubsection{Unipotent radicals}

Let $\bP$ be an arbitrary linear algebraic group over $\Q$.
In applications, $\bP$ will be a parabolic subgroup.
We let $\bN$ be the unipotent radical of $\bP$
and we choose a Levi component $\bL$.
A maximal compact subgroup $K_{\bL,\infty}$ of $\bL(\R)$
is also maximal compact in $\bP(\R)$.
We let $K_{f,\bP}=K_{p,\bP}K^{p}_{\bP}$ be a compact open
 subgroup of $\bP(\A_{f})$ with $K_{p,\bP}$ torsion-free.
We also let $K_{f,\bN}=K_{f}\cap \bN$, and similarly $K_{p,\bN}$.
The subgroup $K_{f,\bN}$ is normal in $K_{f,\bP}$,
and we define $K_{f,\bL}=K_{f,\bP}/K_{f,\bN}$.
We may regard $K_{f,\bL}$ as a subgroup of $\bL(\A_{f})$; however it may
be larger than $K_{f}\cap \bL$.

\begin{theorem}
	\label{parabolic}
	With the notation just described, there is an
	isomorphism of $K_{p,\bP}$-modules:
	$$
		H^{\bullet}_{\ast}(Y_{\bP}(K_{f,\bP}),\scC(K_{p,\bP}))
		=
		H^{\bullet}_{\ast}(Y_{\bL}(K_{f,\bL}),\scC(K_{p,\bL})),
	$$
	where $\ast$ is either the empty symbol or ``$\cpct$''.
	The action of $K_{p,\bP}$
	on $H^{\bullet}_{\ast}(Y_{\bL}(K_{f,\bL}),\scC(K_{p,\bL}))$
	is by right translation on $K_{p,\bL}= K_{p,\bP}/K_{p,\bN}$.
\end{theorem}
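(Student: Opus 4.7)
The plan is to use the projection $\pi:\bP\to\bL=\bP/\bN$ to realize $Y_{\bP}(K_{f,\bP})$ as the total space of a fibration over $Y_{\bL}(K_{f,\bL})$ and then to apply the Leray spectral sequence. More precisely, $\pi$ induces a map $f:Y_{\bP}(K_{f,\bP})\to Y_{\bL}(K_{f,\bL})$, whose fibers are arithmetic quotients of $\bN$: over a connected component of $Y_{\bL}$ represented by some $\ell\in \bL(\A)$, the fiber is essentially $(\bN(\Q)\cap \Lambda)\backslash \bN(\A)/(\bN(\A)\cap \Lambda)$ with $\Lambda=\ell^{-1}K_{f,\bP}\ell$. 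Since $\bN$ is unipotent, this is a compact nil-manifold, so $f$ is a proper map; this properness is exactly what will let the same argument cover both $\ast=$ empty and $\ast=\cpct$.

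Next I would compute the derived direct image $Rf_{*}\scC(K_{p,\bP})$. Writing $F$ for a typical fiber, the stalk of $R^{q}f_{*}\scC(K_{p,\bP})$ at the corresponding base point is $H^{q}(F,\scC(K_{p,\bP})|_{F})$. The fiber $F$ is a $K(\Gamma_{\bN},1)$-space on which $\Gamma_{\bN}$ acts on $K_{p,\bP}$ by left translation through the composition $\Gamma_{\bN}\hookrightarrow K_{p,\bN}\subset K_{p,\bP}$. Applying Theorem \ref{excise} with $H = K_{p,\bN}$ and $G = K_{p,\bP}$ gives
\[
	H^{q}(F,\scC(K_{p,\bP})) \;=\; \hind_{K_{p,\bN}}^{K_{p,\bP}} H^{q}(F,\scC(K_{p,\bN})),
\]
and by Theorem \ref{unipotent} the inner group vanishes for $q>0$ and equals $\Z_{p}$ for $q=0$. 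Since $K_{p,\bN}$ is normal in $K_{p,\bP}$, we have $\hind_{K_{p,\bN}}^{K_{p,\bP}}\Z_{p} = \scC(K_{p,\bP}/K_{p,\bN}) = \scC(K_{p,\bL})$ canonically, and I conclude that $R^{0}f_{*}\scC(K_{p,\bP})$ has stalks $\scC(K_{p,\bL})$ and that $R^{q}f_{*}\scC(K_{p,\bP}) = 0$ for $q > 0$.

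A short check of monodromy then identifies $R^{0}f_{*}\scC(K_{p,\bP})$ with the local system $\scC(K_{p,\bL})$ on $Y_{\bL}(K_{f,\bL})$: the fundamental group $\Gamma_{\bL} = \Gamma_{\bP}/\Gamma_{\bN}$ of each component acts on $K_{p,\bL}$ by left translation through its natural map, while the residual $K_{p,\bP}$-action descends through $K_{p,\bP}\twoheadrightarrow K_{p,\bL}$. The Leray spectral sequence for $f$ then collapses at $E_{2}$, producing the required isomorphism for $\ast$ empty. For the compactly supported case, the properness of $f$ gives $Rf_{!} = Rf_{*}$, so applying the same computation inside the Leray spectral sequence for $H^{\bullet}_{\cpct}$ yields the corresponding statement.

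The main technical obstacle will be making the last identification fully precise. One must track the $\Gamma_{\bP}$-equivariance carefully through the application of Theorem \ref{excise} on each fiber, and verify that passing to the quotient by $\Gamma_{\bN}$ (respectively $K_{p,\bN}$) recovers exactly the defining left $\Gamma_{\bL}$- and right $K_{p,\bL}$-actions on the local system $\scC(K_{p,\bL})$ over $Y_{\bL}(K_{f,\bL})$; once this is confirmed on each connected component, reassembly is automatic.
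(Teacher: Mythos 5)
Your proposal is correct and follows essentially the same route as the paper: the projection $f:Y_{\bP}(K_{f,\bP})\to Y_{\bL}(K_{f,\bL})$ with compact nil-manifold fibres, the degenerating Leray/Grothendieck spectral sequence, and the combination of Theorem \ref{excise} with Theorem \ref{unipotent} to compute the fibrewise cohomology as $\hind_{K_{p,\bN}}^{K_{p,\bP}}\Z_{p}=\scC(K_{p,\bL})$. The paper phrases the collapse via compactness of the fibre rather than $Rf_{!}=Rf_{*}$, but the argument is the same.
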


\begin{proof}
As $\bN$ is a normal subgroup of $\bG$, there is a projection map
$$
	f:Y_{\bP}(K_{f,\bP}) \to Y_{\bL}(K_{f,\bL}).
$$
This map is a fibre bundle with fibre $Y_{\bN}(K_{f,\bN})$.
The fibre is compact, so in both the compactly supported and also the usual case,
the spectral sequence is:
$$
	H^{p}_{\ast}(Y_{\bL}(K_{f,\bL}),H^{q}(Y_{\bN}(K_{f,\bN}),\scC(K_{p,\bP})))
	\implies
	H^{p+q}_{\ast}(Y_{\bP}(K_{f,\bP}),\scC(K_{p,\bP})).
$$
By Theorem \ref{excise} and Theorem \ref{unipotent}
we have
$$
	H^{q}(Y_{\bN}(K_{f,\bN}),\scC(K_{p,\bP}))
	=
	\begin{cases}
		\hind_{K_{p,\bN}}^{K_{p,\bP}} \Z_{p} & q=0,\\
		0 & q>0.
	\end{cases}
$$
Note that there is a canonical isomorphism
$$
	\hind_{K_{p,\bN}}^{K_{p,\bP}} \Z_{p}
	=
	\scC(K_{p,\bL}).
$$
The result follows.
\end{proof}

\subsubsection{Tori and Leopoldt's Conjecture}

Let $\bT$ be a torus over $\Q$,
and let $K_{f}=K^{p}K_{p}$ be a compact open subgroup
of $\bT(\A_{f})$.
We let $\Gamma(K_{f})=\bT(\Q) \cap (\bT(\R)^{\circ}K_{f})$,
and we let $\hat \Gamma(K_{f})$ be the $p$-adic completion of
$\Gamma(K_{f})$.
There is a map $\hat\Gamma(K_{f})\to K_{p}$, and we write
$\Delta(\bT,K_{f})$ for the kernel of this map.
The rank $\gd(\bT)=\rank_{\Z_{p}}\Delta (\bT,K_{f})$ does not depend on
$K_{f}$, and is called the \emph{Leopoldt defect of $\bT$}.

\begin{conjecture}[Leopoldt's Conjecture; see 10.3.5, 10.3.6 of \cite{NeukirchSchmidtWingberg}]
	$\gd(\bT)=0$.
\end{conjecture}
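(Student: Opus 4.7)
The statement $\gd(\bT)=0$ is Leopoldt's conjecture, so no real proof is available; the ``plan'' here is rather a sketch of the classical approach and where it breaks down.  The first step is to translate the statement into familiar language.  For the basic case $\bT=\mathrm{Res}_{F/\Q}\mathbb{G}_{m}$ with $F$ a number field, $\Gamma(K_{f})$ is, up to finite index and torsion, the unit group $\cO_{F}^{\times}$, while $K_{p}$ is a finite-index subgroup of $(\cO_{F}\otimes_{\Z}\Z_{p})^{\times}=\prod_{\gq\mid p}\cO_{F,\gq}^{\times}$.  Applying the $p$-adic logarithm identifies the map $\hat\Gamma(K_{f})\to K_{p}$ with the map sending a unit $u$ to $(\log_{p}\sigma(u))_{\sigma}$ inside $F\otimes_{\Q}\Q_{p}$, so the defect $\gd(\bT)$ equals the $\Z_{p}$-rank defect of the image.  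The task thereby reduces to showing that the $p$-adic regulator of $F$ is nonzero; a general torus $\bT/\Q$ may be treated by passing to a splitting field.

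Second, I would attempt the attack via $p$-adic transcendence theory, whose principal tool is Brumer's $p$-adic analogue of Baker's theorem on linear forms in logarithms.  For $F/\Q$ abelian, the cyclotomic units generate a finite-index subgroup of $\cO_{F}^{\times}$, and their $p$-adic logarithms are explicit enough to be handled directly by Brumer's theorem, which yields $\overline{\Q}$-linear independence.  This is the classical Ax--Brumer argument, which establishes Leopoldt's conjecture for all abelian $F/\Q$, and more generally for all tori whose splitting field is abelian over $\Q$.

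The hard part, at which the plan terminates, is that for non-abelian $F$, Brumer's theorem delivers linear independence over $\overline{\Q}$, whereas the application requires linear independence over $F$ (or a Galois closure).  Since $[F:\Q]$ may be comparable to the number of logarithms being compared, $\overline{\Q}$-independence does not suffice, and a genuine strengthening of $p$-adic transcendence would be needed.  No such strengthening is known, and this is precisely why the conjecture has resisted proof for half a century.  In the context of the present paper, this is also why the assertion is stated as a conjecture: the author is using it as a reference point against which to measure the difficulty of computing $\tH^{\bullet}_{\ast}(\bT,\Z_{p})$ for tori, rather than as something to be established.
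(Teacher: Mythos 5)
You have correctly identified that the paper does not (and cannot) prove this statement: it is stated as a conjecture precisely because it is open, with the paper merely recording in the surrounding remark that it reduces to the case $\mathrm{Res}_{k/\Q}\GL_{1}$ by the classification of tori and Artin induction, and that it is known for tori splitting over abelian extensions of $\Q$. Your sketch of the $p$-adic logarithm translation, the Ax--Brumer argument in the abelian case, and the precise point at which $\overline{\Q}$-linear independence fails to give the needed $F$-linear independence is accurate and is consistent with the paper's stated scope; the paper itself cites Neukirch--Schmidt--Wingberg rather than reproducing any of this, and uses the conjecture only as a benchmark via Theorem \ref{torus} and Corollary \ref{leopoldt}, exactly as you say.
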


This conjecture is usually made for tori of the form $\Rest^{k}_{\Q}\GL_{1}/k$,
where $k$ is a number field.
The apparently more general conjecture given above is in fact equivalent to
the conjecture for all tori of the form $\Rest^{k}_{\Q}\GL_{1}/k$.
This follows easily from the classification of tori (see for example \S3.8.12 of \cite{borelbook}) together with the Artin induction theorem (\S15.4 of \cite{curtisreiner}).
Leopoldt's conjecture is known to hold for tori
 which split over an abelian extension of $\Q$.

\begin{theorem}
	\label{torus}
	Let $\bT$ be a torus over $\Q$ and let $K_{f}=K_{p}K^{p}$
	be chosen so that $\Gamma(K_{f})$ is torsion-free.
	Then we have:
	$$
		H^{n}(Y_{\bT}(K_{f}),\scC(K_{p}))
		=
		\ind_{G}^{K_{p}}\left(
		\Hom_{\Z_{p}}(\wedge^{n}_{\Z_{p}} \Delta, \Z_{p})
		\otimes
		\Z_{p}[\pi_{0}]
		\right),
	$$
	where $G$ (resp. $\Delta$) is the image
	(resp. the kernel) of the map $\hat\Gamma(K_{f})\to K_{p}$
	and $\pi_{0}$ is the group of components of the Lie group
	$Y_{\bT}(K_{f})$.
	The action of $G$ on $\Hom(\wedge^{n} \Delta, \Z_{p})\otimes\Z_{p}[\pi_{0}]$
	 is trivial.
	For compactly supported cohomology we have:
	$$
		H^{n}_{\cpct}(Y_{\bT}(K_{f}),\scC(K_{p}))
		=
		\begin{cases}
			\ind_{G}^{K_{p}}\left(
			\Hom_{\Z_{p}}(\wedge^{n-r}_{\Z_{p}} \Delta, \Z_{p})
			\otimes
			\Z_{p}[\pi_{0}]
			\right)
			& n\ge r,\\
			0 & n<r.
		\end{cases}
	$$
	where $r=\rank_{\Q}\bT$.
\end{theorem}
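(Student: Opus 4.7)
\medskip

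\noindent\textbf{Proof plan.} The strategy is to first describe the topology of $Y_{\bT}(K_{f})$, then apply Theorem \ref{excise} and Lemma \ref{abstractdefect} to each connected component, and finally pass from usual to compactly supported cohomology via a K\"unneth argument.

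Since $\bT$ is abelian, $\bT(\Q)$ acts trivially on $\bT(\A)$ by conjugation, so the finite set $\pi_{0}=\pi_{0}(Y_{\bT}(K_{f}))$ of connected components is a group acting simply transitively, and in particular all components are homeomorphic. Fix one such component $Y_{0}$ and set $a=\dim_{\R}\bT(\R)^{\circ}/K_{\infty}^{\circ}$. Writing the abelian Lie group $\bT(\R)^{\circ}$ as $\R^{a}\times(S^{1})^{b}$ with $K_{\infty}^{\circ}=(S^{1})^{b}$, the universal cover of $Y_{0}$ is $\R^{a}$, which is contractible. Since $\Gamma(K_{f})$ is torsion-free by hypothesis, $Y_{0}$ is a $K(\Gamma(K_{f}),1)$-space, and $\Gamma(K_{f})$ is a finitely generated torsion-free abelian group. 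A Dirichlet-type rank computation for tori identifies $a=\rank_{\R}\bT$ and $\rank_{\Z}\Gamma(K_{f})=\rank_{\R}\bT-\rank_{\Q}\bT=a-r$; choosing a complement to $\Gamma(K_{f})\otimes\R$ in $\R^{a}$ yields a $\Gamma(K_{f})$-equivariant homotopy equivalence $Y_{0}\simeq T^{a-r}\times\R^{r}$ with $T^{a-r}$ an $(a-r)$-torus and $\Gamma(K_{f})$ acting only on the first factor.

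For usual cohomology, apply Theorem \ref{excise} with $H=G$ the analytic closure of the image of $\Gamma(K_{f})$ in $K_{p}$ to obtain
$$
    H^{\bullet}(Y_{0},\scC(K_{p}))\;=\;\hind_{G}^{K_{p}}H^{\bullet}(Y_{0},\scC(G)).
$$
By construction the map $\hat\Gamma(K_{f})\twoheadrightarrow G$ is surjective, so Lemma \ref{abstractdefect} applies and gives $H^{n}(Y_{0},\scC(G))=\Hom_{\Z_{p}}(\wedge^{n}_{\Z_{p}}\Delta,\Z_{p})$ with trivial $G$-action. Summing over the $|\pi_{0}|$ connected components, which $K_{p}$ permutes through its right action on $\bT(\A_{f})$, produces the factor $\Z_{p}[\pi_{0}]$ and yields the first displayed formula of the theorem.

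For compactly supported cohomology, combine the product decomposition $Y_{0}\simeq T^{a-r}\times\R^{r}$ with the K\"unneth formula to obtain
$$
    H^{n}_{\cpct}(Y_{0},\scC(K_{p}))\;\cong\;H^{n-r}(T^{a-r},\scC(K_{p}))\otimes H^{r}_{\cpct}(\R^{r},\Z_{p}),
$$
which is valid because $\scC(K_{p})$ is pulled back from the torus factor and is constant along the $\R^{r}$-direction. Since $H^{\bullet}_{\cpct}(\R^{r},\Z_{p})$ is concentrated in degree $r$ with value $\Z_{p}$, this produces $H^{n}_{\cpct}(Y_{0},\scC(K_{p}))=H^{n-r}(Y_{0},\scC(K_{p}))$ for $n\ge r$ and zero otherwise; inducing and summing over components as before gives the second displayed formula. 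The main technical obstacle is the rank computation $\rank\Gamma(K_{f})=a-r$ together with the topological splitting $Y_{0}\simeq T^{a-r}\times\R^{r}$; once these are in place, the K\"unneth decomposition with local coefficients is routine because the local system depends only on the $\pi_{1}$ of the torus factor.
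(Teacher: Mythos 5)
Your proposal follows essentially the same route as the paper: apply Theorem \ref{excise} to reduce from $\scC(K_{p})$ to $\scC(G)$, identify each connected component as a $K(\Gamma(K_{f}),1)$-space (a quotient of a vector group), use Lemma \ref{abstractdefect} to compute the usual cohomology as $\Hom_{\Z_{p}}(\wedge^{\bullet}\Delta,\Z_{p})$, and split off a Euclidean factor of dimension $r=\rank_{\Q}\bT$ to obtain the degree shift for compactly supported cohomology. The decomposition into connected components and the Dirichlet-type rank count are also the same as the paper's.

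Two small points of wording matter. First, you write that the chosen complement yields a \emph{homotopy equivalence} $Y_{0}\simeq T^{a-r}\times\R^{r}$; it is in fact a homeomorphism, and this is essential, since compactly supported cohomology is not a homotopy invariant. The paper avoids this pitfall by explicitly writing $Y_{0}=V/\Gamma$ with $V=V_{0}\times V_{1}$ and then running the Leray spectral sequence for the proper projection $Y_{0}\to V_{0}/\Gamma$; your K\"unneth formulation is fine provided you assert the product structure as a homeomorphism and note that the local system is pulled back from the compact torus factor. Second, the claim that ``$K_{p}$ permutes the components through its right action on $\bT(\A_{f})$'' is misleading. On $Y_{\bT}(K_{f})$ with $K_{f}=K_{p}K^{p}$ fixed, $K_{p}$ does not act by translation on the space at all; it acts on the cohomology through right translation on the coefficient sheaf $\scC(K_{p})$, and the $K_{p}$-module structure is entirely the induction $\hind_{G}^{K_{p}}$. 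The factor $\Z_{p}[\pi_{0}]$ records only that $Y_{\bT}(K_{f})$ is a disjoint union of $|\pi_{0}|$ homeomorphic copies of $Y_{0}$ carrying isomorphic local systems, exactly as the paper states via its K\"unneth step for $\pi_{0}$.
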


\begin{proof}
By Theorem \ref{excise}, it is sufficient to calculate the groups
$$
	H^{\bullet}_{\ast}(Y_{\bT}(K_{f}),\scC(G)).
$$
The space $Y_{\bT}(K_{f})$ is a Lie group.
We shall write $Y_{0}$ for its identity component.
The group $\pi_{0}$ of components is finite,
 and by the K\"unneth formula we have (as $G$-modules):
$$
	H^{\bullet}_{\ast}(Y_{\bT}(K_{f}),\scC(G))
	=
	H^{\bullet}_{\ast}(Y_{0},\scC(G))\otimes \Z_{p}[\pi_{0}].
$$
The identity component $Y_{0}$ is homeomorphic to $V/\Gamma$,
 where $V$ is a vector space over $\R$
 and $\Gamma$ is a lattice, which we identify with $\Gamma(K_{f})$.
Our use of the word ``lattice'' does not imply that $\Gamma$ spans $V$.
In general, $\dim(V)=\rank_{\R}(\bT)$, whereas (by Dirichlet's unit theorem)
 $\Gamma$ spans a subspace $V_{0}$ of rank $\rank_{\R}(\bT)-\rank_{\Q}(\bT)$.

In any case, $Y_{0}$ is a $K(\Gamma,1)$-space,
 so by Lemma \ref{abstractdefect} we have:
$$
	H^{\bullet}(Y_{0},\scC(G))
	=
	\Hom_{\Z_{p}}(\wedge_{\Z_{p}}^{\bullet}\Delta,\Z_{p}).
$$
The first part of the theorem follows.

To calculate the compactly supported cohomology,
we choose a decomposition $V=V_{0}\times V_{1}$ as real vector spaces.
The space $V_{0}/\Gamma$ is a compact $K(\Gamma,1)$ space,
so we have:
$$
	H^{n}_{\cpct}(V_{0}/\Gamma,\scC(G))
	=
	H^{n}(Y_{0},\scC(G)).
$$
Furthermore, since $V_{1}$ is an $r$-dimensional Euclidean space, we have:
$$
	H^{n}_{\cpct}(V_{1},\scC(G))
	=
	\begin{cases}
		\scC(G) & n=r,\\
		0 & n\ne r.
	\end{cases}
$$
The result follows from the spectral sequence of the map $Y_{0}\to V_{0}/\Gamma$.
\end{proof}

\begin{corollary}
	\label{leopoldt}
	Let $\bT$ be a torus over $\Q$ and let $\gd(\bT)$ be the Leopoldt
	defect of $\bT$.
	Assume $\Gamma(K_{f})$ is torsion-free.
	Then $H^{n}(Y_{\bT}(K_{f}),\scC(K_{p}))$ is non-zero
	if and only if $n\le \gd(\bT)$.
	In particular the following are equivalent:
	\begin{enumerate}
		\item
		Leopoldt's conjecture holds for $\bT$;
		\item
		$H^{1}(Y_{\bT}(K_{f}),\scC(K_{p}))=0$;
		\item
		$H^{n}(Y_{\bT}(K_{f}),\scC(K_{p}))=0$ for all $n>0$.
	\end{enumerate}
\end{corollary}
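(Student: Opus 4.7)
The plan is to deduce everything directly from Theorem \ref{torus}, so the core of the argument is a short cohomological computation on the level of $\Z_p$-modules.

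First I would unwind the Leopoldt defect in linear-algebraic terms. By definition, $\gd(\bT) = \rank_{\Z_p}\Delta(\bT,K_f)$. The group $\hat\Gamma(K_f)$ is the pro-$p$ completion of a finitely generated torsion-free abelian group, so it is a free $\Z_p$-module of finite rank. Its submodule $\Delta$ is then also a free $\Z_p$-module, necessarily of rank $\gd(\bT)$. Consequently
$$\wedge^{n}_{\Z_p}\Delta \;\cong\; \Z_{p}^{\binom{\gd(\bT)}{n}},$$
which vanishes precisely when $n>\gd(\bT)$ and is a nonzero free $\Z_p$-module otherwise. Dually, $\Hom_{\Z_p}(\wedge^{n}_{\Z_p}\Delta,\Z_p)$ vanishes iff $n>\gd(\bT)$.

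Next I would invoke Theorem \ref{torus}, which identifies
$$H^{n}(Y_{\bT}(K_f),\scC(K_p)) \;=\; \ind_{G}^{K_{p}}\Bigl(\Hom_{\Z_{p}}(\wedge^{n}_{\Z_{p}}\Delta,\Z_{p}) \otimes \Z_{p}[\pi_{0}]\Bigr).$$
Tensoring with the nonzero free module $\Z_{p}[\pi_{0}]$ preserves vanishing and non-vanishing, and the smooth induction $\hind_{G}^{K_{p}}$ of a nonzero continuous $G$-module is always nonzero (its underlying space contains the constant functions valued in the module). Combining these observations, $H^{n}(Y_{\bT}(K_f),\scC(K_p))$ is nonzero if and only if $n \le \gd(\bT)$, which is the first assertion of the corollary.

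For the equivalence of (1), (2), (3): the implication (1)$\Rightarrow$(3) is immediate from the first assertion, since $\gd(\bT)=0$ forces $H^{n}=0$ for all $n>0$; (3)$\Rightarrow$(2) is trivial; and (2)$\Rightarrow$(1) follows because non-vanishing of $H^{1}$ is equivalent to $\gd(\bT)\ge 1$, so $H^{1}=0$ forces $\gd(\bT)=0$, i.e.\ Leopoldt's conjecture for $\bT$. There is no real obstacle here; the only thing one needs to check carefully is freeness of $\Delta$ over $\Z_p$ and the fact that induction and the tensor factor $\Z_p[\pi_0]$ do not create or destroy zeros, both of which are formal.
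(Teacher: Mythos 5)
Your proof is correct and is essentially the intended deduction: the paper gives no separate proof of Corollary~\ref{leopoldt}, evidently regarding it as an immediate consequence of Theorem~\ref{torus}, and you have supplied exactly the missing details. The key observations you flag — that $\hat\Gamma(K_f)$ is $\Z_p$-free (being the $p$-adic completion of a finitely generated torsion-free abelian group), hence its $\Z_p$-submodule $\Delta$ is free of rank $\gd(\bT)$; that $\wedge^n_{\Z_p}\Delta$ therefore vanishes precisely for $n>\gd(\bT)$; and that neither $-\otimes\Z_p[\pi_0]$ (with $\pi_0$ a nonempty finite set) nor induction from $G$ to $K_p$ can create or destroy zeros — are all correct and cover the only points requiring verification. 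One small terminological slip: $\hind_G^{K_p}$ is the \emph{continuous} induction functor of the paper, not the smooth one ($\smoothind$); the functor the theorem actually uses (trace it through Theorem~\ref{excise}) is indeed $\hind$, and your non-vanishing argument via constant functions applies to it, so the argument goes through, but you should name the functor correctly.
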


\subsubsection{Universal covers of semi-simple groups}

\begin{theorem}
	\label{universalcover}
	Let $\bG/\Q$ be the (algebraic) universal cover of
	a semi-simple group $\bH/\Q$.
	For any tame level $K^{p}_{\bH}$ in $\bH$,
	the group $\tH^{n}(K^{p}_{\bH},\Z_{p})$ is a sum of finitely many
	copies of $\tH^{n}(K^{p}_{\bG},\Z_{p})$, where $K^{p}_{\bG}$
	is the preimage of $K^{p}_{\bH}$ in $\bG$.
\end{theorem}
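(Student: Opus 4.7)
The plan is to use the Corollary following Theorem \ref{reinterpret2} to replace Emerton's groups by the sheaf cohomology of a local system on a finite simplicial complex, and then analyze the effect of the isogeny $\pi\colon\bG\to\bH$ directly at the geometric level. The central kernel $\bZ=\ker\pi$ is finite, so the induced map on arithmetic quotients is finite-to-one in a controlled way, and since $\bZ$ is \emph{central}, no nontrivial inner twist of $\bG$ appears among the components of $Y_\bH$.

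Concretely, fix an auxiliary compact open $K_{p,\bH}\subset \bH(\Q_p)$ which is torsion-free and sufficiently small, and set $K_{p,\bG}=\pi^{-1}(K_{p,\bH})\subset\bG(\Q_p)$ and $K^p_\bG=\pi^{-1}(K^p_\bH)$. By the corollary after Theorem \ref{reinterpret2},
\begin{equation*}
\tH^n(K^p_\bH,\Z_p)=H^n(Y_\bH(K_{f,\bH}),\scC(K_{p,\bH})),
\end{equation*}
and analogously for $\bG$. The morphism $\pi$ induces a continuous map $\phi\colon Y_\bG(K_{f,\bG})\to Y_\bH(K_{f,\bH})$ whose fibres are finite (of size bounded by $|\bZ|$). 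The target decomposes into finitely many connected components $Y_\bH(K_{f,\bH})=\bigsqcup_{i=1}^{m}C_i$, indexed by the class set $\bH(\Q)\backslash\bH(\A_f)/K_{f,\bH}$ times $\pi_0(\bH(\R)/K_\infty^\circ)$.

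Each component $C_i=\Gamma_i\backslash X_\infty$ is the quotient of the symmetric space $X_\infty$ by an arithmetic subgroup $\Gamma_i\subset\bH(\Q)$; its preimage $\tilde\Gamma_i=\pi^{-1}(\Gamma_i)\cap\bG(\Q)$ is an arithmetic subgroup of $\bG(\Q)$. Because $\bZ$ is central and finite, the quotients $\tilde\Gamma_i\backslash X_\infty$ are all commensurable with $Y_\bG(K_{f,\bG})$ (they differ by passing to finite-index subgroups only), so the uniformization of $C_i$ is governed by the same group $\bG$ throughout. Applying Theorem \ref{excise} to each $C_i$ one should obtain
\begin{equation*}
H^n(C_i,\scC(K_{p,\bH}))\;\cong\;\tH^n(K^p_\bG,\Z_p),
\end{equation*}
and summing over $i$ gives the desired decomposition $\tH^n(K^p_\bH,\Z_p)=\bigoplus_{i=1}^{m}\tH^n(K^p_\bG,\Z_p)$.

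The hard part will be handling the finite central kernel $\bZ$ cleanly when $|\bZ|$ is divisible by $p$: a priori the inclusion $\pi(\tilde\Gamma_i)\subset\Gamma_i$ and the kernel $\tilde\Gamma_i\cap\bZ(\Q)$ could contribute $p$-power torsion to mod-$p^s$ cohomology, and one cannot dispose of these by inverting $|\bZ|$. The correct remedy is to exploit the definition of $\tH^n$: one passes to the projective limit over $s$ together with the direct limit over shrinking $K^p$, in which bounded torsion dies (this is precisely the mechanism used in the proof of Theorem \ref{excise}). Equivalently, a Hochschild--Serre spectral sequence for the central extension $1\to\bZ\to\bG\to\bH\to 1$ applied on each component should collapse to the $p=0$ row once one verifies that the higher cohomology of $\bZ$ with coefficients in $\scC(K_{p,\bH})$ vanishes after the limit.
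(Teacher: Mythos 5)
Your approach has the right general shape---reduce to components of $Y_{\bH}$ and try to identify each with an arithmetic quotient of $\bG$---but the choice of level at $p$ is made in the wrong direction, and this creates exactly the torsion problem you flag at the end without resolving it. You set $K_{p,\bG}=\pi^{-1}(K_{p,\bH})$, which contains the finite group $\bF(\Q_p)=\ker\pi(\Q_p)$ and hence is \emph{not} torsion-free; the Corollary after Theorem \ref{reinterpret2} then does not apply on the $\bG$-side, and the subsequent steps cannot get started. The paper inverts this: it chooses $K_{p,\bG}\subset\bG(\Q_p)$ torsion-free and small, so that $K_{p,\bG}\cap\bF(\Q_p)=\{1\}$, and then defines $K_{p,\bH}$ to be the \emph{image} of $K_{p,\bG}$. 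This makes $K_{p,\bH}\cong K_{p,\bG}$ bijectively, so there is no torsion to dispose of and no need for Hochschild--Serre for the kernel $\bF$.

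The second gap is in the component analysis. You say each $C_i$ is ``commensurable'' with a quotient for $\bG$ and that Theorem \ref{excise} ``should'' give the identification $H^n(C_i,\scC(K_{p,\bH}))\cong\tH^n(K^p_\bG,\Z_p)$, but Theorem \ref{excise} is about a closed subgroup $H\subset G$ containing the image of $\pi_1$, and it does not produce this as stated. The genuine subtlety is that the components of $Y_\bH(K_f)$ are indexed by double cosets represented by elements $h\in\bH(\A)$, and on the component $Y_0(h)$ the fundamental group acts on $\scC(K_{p,\bH})$ through conjugation by $h$; so one needs $hK_{p,\bH}h^{-1}$, and not just $K_{p,\bH}$, to be the bijective image of a torsion-free subgroup of $\bG(\Q_p)$. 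This is not automatic, because $\pi\colon\bG(\Q_p)\to\bH(\Q_p)$ is not surjective: its cokernel is $H^1(\Q_p,\bF)$ (and one uses Kneser's vanishing of $H^1(\Q_p,\bG)$ to know the sequence is exact). The paper uses the \emph{finiteness} of $H^1(\Q_p,\bF)$ to shrink $K_{p,\bG}$ once and for all so that every conjugate $hK_{p,\bH}h^{-1}$ lifts; then strong approximation for the simply connected $\bG$ identifies $Y_0(h)$ with a connected arithmetic quotient $Y_\bG(K^p_\bG G(h))$, and the direct limit over compact open subgroups $K_{p,\bG}$ absorbs the dependence on $G(h)$. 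Your proposed remedies (killing bounded torsion in the $\limprojs$, or a Hochschild--Serre sequence for $1\to\bZ\to\bG\to\bH\to1$) do not address this conjugation issue, and the latter is not even a short exact sequence of the relevant arithmetic or $p$-adic groups (it is only exact on $\Q_p$-points up to $H^1(\Q_p,\bF)$), so it has no clean group-cohomology interpretation here.
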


\begin{proof}
We have a short exact sequence of algebraic groups over $\Q$:
$$
	1
	\to
	\bF
	\to
	\bG
	\to
	\bH
	\to
	1,
$$
where $\bF$ is finite and abelian.
This gives rise to a long exact sequence in Galois cohomology.
$$
	1
	\to
	\bF(\Q_{p})
	\to
	\bG(\Q_{p})
	\to
	\bH(\Q_{p})
	\to
	H^{1}(\Q_{p},\bF)
	\to
	1.
$$
The triviality of the final term is a theorem of Kneser
(see \cite{kneser-H1part1} and also \S III.3.1 of \cite{serre-galoiscohomology}).

The group $H^{1}(\Q_{p},\bF)$ is finite by local class field theory.
We may choose a level $K_{p,\bG}\subset \bG(\Q_{p})$
small enough so that it is torsion-free.
In particular its intersection with $\bF(\Q_{p})$ is trivial.
We let $K_{p,\bH}$ be the image of $K_{p,\bG}$ in $\bH(\Q_{p})$.
This is a compact open subgroup of $\bH(\Q_{p})$ and is isomorphic to
$K_{p,\bG}$.
Replacing $K_{p,\bH}$ and $K_{p,\bG}$ by subgroups
if necessary, we may ensure that for every element $h\in \bH(\Q_{p})$,
the subgroup $h K_{p,\bH}h^{-1}$ is the bijective image of a torsion-free
subgroup of $\bG(\Q_{p})$.
This follows from the finiteness of $H^{1}(\Q_{p},\bF)$.

Choose a tame level $K^{p}_{\bH}$ in $\bH$, and define
$K^{p}_{\bG}$ to be its preimage in $\bG(\A_{f})$.
We let $K_{f,\bH}=K_{p,\bH}K^{p}_{\bH}$, and similarly for $\bG$.

The arithmetic quotient $Y_{\bH}(K_{f})$ is in
general not connected. Its connected components are
indexed by the finite set
$$
	\bH(\Q)\backslash\bH(\A)/\bH(\R)^{\circ}K_{f}.
$$
More precisely, the double coset containing an element $h\in\bH(\A)$
corresponds to a component
$$
	Y_{0}(h)
	=
	\Gamma\backslash \bH(\R)^{\circ}/K_{\infty,\bH}^{\circ},
	\qquad
	\Gamma
	=
	\Gamma(h^{-1}K_{f,\bH}h).
$$
Since $h^{-1}K_{p,\bH}h$ is torsion-free,
$\Gamma$ must be torsion-free, so $Y(h)$ is a $K(\Gamma,1)$-space.
The action of $\Gamma$ on $\scC(K_{p})$
 is the composition of the usual left-translation,
 with conjugation by $h$.
This implies:
$$
	H^{\bullet}(Y_{0}(h),\scC(K_{p,\bH}))
	=
	H^{\bullet}(Y_{0}(h),\scC(h K_{p,\bH}h^{-1})),
$$
where the action of $\Gamma$ on $\scC(h K_{p}h^{-1})$
is by left translation.

Choose a compact open subgroup $G(h)\subset\bG(\Q_{p})$ such that
$h K_{p,\bH}h^{-1}$ is the bijective image of $G(h)$.
By the strong approximation theorem,
$\Gamma$ is the bijective image of the congruence subgroup
$\Gamma_{\bG}(K_{\bG}^{p}G(h))$ of $\bG$, and there is a natural bijection:
$$
	Y_{0}(h)
	=
	Y_{\bG}(K^{p}_{\bG}G(h)).
$$
This implies
$$
	H^{\bullet}(Y_{0}(h),\scC(h K_{p,\bH}h^{-1}))
	=
	H^{\bullet}(Y_{\bG}(K^{p}_{\bG}G(h)),\scC(G(h))).
$$
The result follows.
\end{proof}

\section{Vanishing Theorems}
\label{borelserresection}

We return in this section to the notation of the introduction.

\begin{theorem}
	\label{borelserreboundary}
	Let $\bG$ be a reductive group over $\Q$.
	Assume that the maximal split torus in the centre of $\bG$ is trivial.
	Then $\cd_{\partial}(\bG,\Z_{p})\le D(\bG,\Z_{p})$
	and $\cd_{\partial}(\bG,\Q_{p})\le D(\bG,\Q_{p})$.
\end{theorem}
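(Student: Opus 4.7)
The approach is to decompose $\partial Y(K^{\gp})^{\BS}$ via its Borel--Serre stratification and reduce each piece to an arithmetic quotient for a Levi factor $\bM$, with the compactly supported flavour appearing naturally through the stratification spectral sequence.

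First, using Corollary \ref{reinterpretusual} (applied with $Y$ replaced by $\partial Y(K^{\gp})^{\BS}$, viewed as the projective limit of the boundaries $\partial Y(K^{\gp}K_{\gp})^{\BS}$ and $Z=\emptyset$), I identify
$$
\tH^{n}_{\partial}(K^{\gp},\Z_{p}) \;=\; \check H^{n}(\partial Y(K^{\gp})^{\BS},\scC),
$$
or equivalently, via Theorem \ref{reinterpret2}, with $H^{n}(\partial Y(K_{f})^{\BS},\scC(K_{\gp}))$ on a finite simplicial complex with local system. Now $\partial Y(K_{f})^{\BS}$ is a finite union of closed Borel--Serre faces $e'(\bP)$, indexed by $\Gamma$-conjugacy classes of proper $\Q$-parabolics $\bP\subset \bG$, with open stratum $e(\bP)=e'(\bP)\setminus\bigcup_{\bQ\subsetneq\bP}e'(\bQ)$. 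I would filter this space by codimension of strata, $F_{k}=\bigcup_{\mathrm{codim}\,\bP\ge k}e'(\bP)$, producing a spectral sequence of local-system cohomology groups
$$
E_{1}^{p,q} \;=\; \bigoplus_{\mathrm{codim}\,\bP=p} H^{p+q}_{\cpct}(e(\bP),\scC(K_{\gp})) \;\Longrightarrow\; H^{p+q}(\partial Y(K_{f})^{\BS},\scC(K_{\gp})),
$$
where the identification of the relative cohomology $H^{\bullet}(F_{p},F_{p+1};-)$ with compactly supported cohomology of the locally closed stratum $F_{p}\setminus F_{p+1}$ is the local-system analogue of Theorem \ref{spantheorem}.

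Next I would compute each term. For a standard proper parabolic $\bP=\bM\bA\bN$, the open face $e(\bP)$ is canonically identified with the arithmetic quotient $Y_{\bM\bN}(K_{f,\bM\bN})$ where $\bM\bN\subset\bP$ is the complement of the $\Q$-split central torus $\bA$; indeed, the Borel--Serre recipe realises $e(\bP)=\Gamma_{\bP}\backslash(X_{\bM}\times\bN(\R))$, which is exactly the arithmetic quotient for the linear algebraic group $\bM\bN$ (whose unipotent radical is $\bN$ and Levi factor is $\bM$). Applying the compactly supported version of Theorem \ref{parabolic} to $\bM\bN$ yields
$$
H^{p+q}_{\cpct}(e(\bP),\scC(K_{\gp,\bP})) \;=\; H^{p+q}_{\cpct}(Y_{\bM}(K_{f,\bM}),\scC(K_{\gp,\bM})),
$$
and passing to the projective limit over $s$ and direct limit over $K^{\gp},K_{\gp}$ identifies this with $\tH^{p+q}_{\cpct}(K^{\gp}_{\bM},\Z_{p})$, which vanishes for $p+q>\cd_{\cpct}(\bM,\Z_{p})\le D(\bG,\Z_{p})$. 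Hence every $E_{1}^{p,q}$ with $p+q>D(\bG,\Z_{p})$ vanishes, so the abutment $\tH^{n}_{\partial}(K^{\gp},\Z_{p})$ vanishes for $n>D(\bG,\Z_{p})$, giving $\cd_{\partial}(\bG,\Z_{p})\le D(\bG,\Z_{p})$; the $\Q_{p}$-bound is obtained by running the same spectral sequence on the sheaf $\scC(K_{\gp})\otimes_{\Z_{p}}\Q_{p}$ throughout.

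The main obstacle is the bookkeeping in Step 3: one must verify that the geometric identification $e(\bP)\cong Y_{\bM\bN}(K_{f,\bM\bN})$ is compatible with the sheaf $\scC(K_{\gp})$ (so that the restriction corresponds under the isomorphism to the local system associated to a compact open subgroup of $\bP(\Q_{p})$), and that the stratification spectral sequence commutes with the projective limit in $s$ and the direct limits in $K^{\gp}$ and $K_{\gp}$ used to define $\tH^{\bullet}_{\partial}$ and $\tH^{\bullet}_{\cpct}$. Both are essentially formal once the sheaf-theoretic picture of \S\ref{interpret2sec} is in place, but the precise level-matching between $\bG$, $\bP$, $\bM\bN$ and $\bM$ needs to be handled carefully. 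The sharpness remark in the introduction is then visible from the spectral sequence: the top term should typically survive, attached to a minimal proper parabolic.
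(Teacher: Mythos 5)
Your proposal is correct and takes essentially the same approach as the paper: identify $\tH^{\bullet}_{\partial}$ with local system cohomology of $\partial Y(K_f)^{\BS}$, stratify the boundary by $\Gamma$-conjugacy classes of proper $\Q$-parabolics, identify each face with an arithmetic quotient for $\bM\bN$, and invoke Theorem \ref{parabolic} together with Theorem \ref{excise} to reduce to compactly supported cohomology for $\bM$. The only cosmetic difference is that you package the argument as a codimension-filtration spectral sequence, while the paper orders the strata individually and runs an equivalent induction via the long exact sequence of a pair together with excision.
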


\begin{proof}
We prove the result in the case of coefficients in $\Z_{p}$.
The proof for $\Q_{p}$ is the same.

Choose a level $K_{f}=K_{p}K^{p}$,
 with $K_{p}$ small enough so that it is torsion-free.
We have (as $K_{p}$-modules):
$$
	\tH^{\bullet}_{\partial}(K^{p},\Z_{p})
	=
	H^{\bullet}(\partial Y(K^{f})^{\BS},\scC(K_{p})).
$$
We must prove that $\tH^{n}_{\partial}(K^{p},\Z_{p})=0$ for $n>D(\bG,\Z_{p})$.
It is clearly sufficient to prove, for each connected component
 $Y_{0}$ of $Y(K^{f})$,
 that $H^{n}(\partial Y_{0}^{\BS},\scC(K_{p}))=0$ for $n>D(\bG,\Z_{p})$.

Fix a component $Y_{0}$ of $Y(K_{f})$.
We have, for a congruence subgroup $\Gamma$ of $\bG$,
$$
	Y_{0}
	=
	\Gamma\backslash \bG(\R)^{\circ}/K_{\infty}^{\circ}.
$$
Replacing $K_{f}$ by a conjugate if necessary,
 we may assume that $\Gamma=\Gamma(K_{f})$.
Hence $\Gamma$ is torsion-free and $Y_{0}$ is a $K(\Gamma,1)$-space.

We recall the structure of the Borel--Serre boundary of $Y_{0}$.
Let $\bP_{1},\ldots,\bP_{N}$ be the $\Gamma$-conjugacy classes
of proper parabolic $\Q$-subgroups of $\bG$.
For each $\bP_{t}$, we have a Levi decomposition:
$$
	\bP_{t}
	=
	\bL_{t}\ltimes \bN_{t},
$$
 where $\bN_{t}$ is the unipotent radical of $\bP_{t}$
 and $\bL_{t}$ is a Levi component of $\bP_{t}$.
We further decompose $\bL_{t}=\bA_{t}\bM_{t}$,
 where $\bA_{t}$ is the maximal $\Q$-split torus in the centre of $\bL_{t}$
 and $\bM_{t}$ is the intersection of the kernels of the
 homomorphisms $\bL_{t}\to\GL_{1}/\Q$.
We shall choose $\bP_{t}$ and $\bL_{t}$ in such a way that
 $K_{\infty,\bM_{t}}=\bP_{t}(\R)\cap K_{\infty}$
 is a maximal compact subgroup of $\bM_{t}(\R)$.
We define a subgroup $\bQ_{t}=\bM_{t}\ltimes \bN_{t}$.
For each parabolic subgroup $\bP_{t}$ we have a boundary component:
$$
	Z_{t}
	=
	(\Gamma\cap \bQ_{t})
	\backslash
	\bQ_{t}(\R)^{\circ}/K_{\infty,\bM_{t}}^{\circ}.
$$
Note that $Z_{t}$ is a connected component of
the arithmetic quotient:
$$	
	Y_{\bQ_{t}}(Q_{t,f}),
	\qquad
	Q_{t,f}=K_{f}\cap \bQ_{t}(\A_{f}).
$$
For $n>D(\bG,\Z_{p})$, we know by Theorems
 \ref{excise} and \ref{parabolic}
 that $\tH^{n}_{\cpct}(Y_{\bQ_{t}}(Q_{t,f}),\scC(K_{p}))=0$.
It follows that
\begin{equation}
	\label{useful}
	\tH^{n}_{\cpct}(Z_{t},\scC(K_{p}))
	=
	0
	\hbox{ for }
	n>D(\bG,\Z_{p}).
\end{equation}

As a set, the Borel--Serre boundary of $Y_{0}$ is given by:
$$
	\partial Y_{0}^{\BS}
	=
	Z_{1}\cup \cdots \cup Z_{N}.
$$
Each component $Z_{t}$ in turn has a Borel--Serre compactification
 $Z_{t}^{\BS}$, which may be identified with
 the closure of $Z_{t}$ in $Y_{0}^{\BS}$.
The boundary $\partial Z_{t}^{\BS}$ is the union of those components $Z_{u}$
for which $\bP_{u}$ is $\Gamma$-conjugate to a proper subgroup of $\bP_{t}$.
We shall order the subgroups $\bP_{t}$ so that
 if $\bP_{t}$ is $\Gamma$-conjugate to a subgroup of $\bP_{u}$
 then $t<u$.
With this choice of ordering we have a filtration on $\partial Y_{0}^{\BS}$
by compact subspaces:
$$
	A_{1}\subset A_{2}\subset \cdots \subset A_{N}=\partial Y_{0}^{\BS},
$$
$$
	A_{t}
	=
	\bigcup_{u\le t}
	Z_{u}
	=
	\bigcup_{j\le i}
	Z_{u}^{\BS}.
$$
Let $n>D(\bG)$. We shall prove by induction on $t$
 that $H^{n}(A_{t},\scC(K_{p}))=0$.

In the case $t=1$, we have $A_{1}=Z_{1}$.
The subgroup $\bP_{1}$ is a minimal parabolic subgroup.
This implies that the rational rank of $\bM_{1}$ is zero.
It follows that $Z_{1}$ is compact, so by (\ref{useful})
we have:
$$
	H^{n}(A_{1},\scC(K_{p}))
	=
	0
	\hbox{ for }
	n>D(\bG,\Z_{p}).
$$

Assume that $H^{n}(A_{t-1},\scC(K_{p}))=0$ for $n>D(\bG)$
and consider the exact sequence
$$
	\to
	H^{n}(A_{t},A_{t-1},\scC(K_{p})) \to 
	H^{n}(A_{t},\scC(K_{p})) \to 
	H^{n}(A_{t-1},\scC(K_{p})) \to.
$$
By assumption, the last term in the sequence is zero.
For the inductive step, we shall show that the first term is zero.
By the excision axiom, we have
$$
	H^{n}(A_{t},A_{t-1},\scC(K_{p}))
	=
	H^{n}(Z_{t}^{\BS},\partial Z_{t}^{\BS},\scC(K_{p})).
$$
This is equal to $H^{n}_{\cpct}(Z_{t},\scC(K_{p}))$,
which by (\ref{useful}) is zero for $n>D(\bG,\Z_{p})$.
\end{proof}

\begin{corollary}
	\label{equality}
	Let $\bG/\Q$ be reductive and suppose the maximal $\Q$-split torus
	 in the centre of $\bG$ is trivial.
	\begin{enumerate}
		\item
		The map $\tH^{n}_{\cpct}(K^{p},\Q_{p})\to\tH^{n}(K^{p},\Q_{p})$
		is an isomorphism for $n>D(\bG,\Q_{p})+1$ and is
		surjective for $n=D(\bG,\Q_{p})+1$.
		\item
		The map $\tH^{n}_{\cpct}(K^{p},\Z_{p})\to\tH^{n}(K^{p}, \Z_{p})$
		is an isomorphism for $n>D(\bG, \Z_{p})+1$ and is
		surjective for $n=D(\bG, \Z_{p})+1$.
	\end{enumerate}
\end{corollary}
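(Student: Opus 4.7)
The statement follows straightforwardly from assembling two results already established in the paper, so the plan is really just to make that assembly explicit. First I would invoke Corollary \ref{longexactsequence}, which gives the long exact sequence
\[
	\cdots \to \tH^{n-1}_{\partial}(K^{p},\Z_{p}) \to \tH^{n}_{\cpct}(K^{p},\Z_{p}) \to \tH^{n}(K^{p},\Z_{p}) \to \tH^{n}_{\partial}(K^{p},\Z_{p}) \to \tH^{n+1}_{\cpct}(K^{p},\Z_{p}) \to \cdots
\]
and the identical sequence tensored over $\Z_{p}$ with $\Q_{p}$ (tensor with $\Q_{p}$ is exact). Next I would feed in Theorem \ref{borelserreboundary}, which asserts $\cd_{\partial}(\bG,-) \le D(\bG,-)$; by the very definition of $\cd_{\partial}$ this means $\tH^{m}_{\partial}(K^{p},-) = 0$ for every $m > D(\bG,-)$.

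From there the two clauses of the corollary drop out by elementary diagram-chasing in the long exact sequence. For $n > D(\bG,-)+1$ one has both $n > D(\bG,-)$ and $n-1 > D(\bG,-)$, so the terms $\tH^{n-1}_{\partial}$ and $\tH^{n}_{\partial}$ bracketing the map $\tH^{n}_{\cpct} \to \tH^{n}$ both vanish, forcing that map to be an isomorphism. For $n = D(\bG,-)+1$ only the right-hand term $\tH^{n}_{\partial}$ is guaranteed to vanish (the left-hand term $\tH^{n-1}_{\partial} = \tH^{D(\bG,-)}_{\partial}$ may well be non-zero), and this alone suffices for surjectivity of $\tH^{n}_{\cpct} \to \tH^{n}$.

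Since both Corollary \ref{longexactsequence} and Theorem \ref{borelserreboundary} are already available in the paper, there is no real obstacle: the only thing to be careful about is the off-by-one between the index on the boundary term that controls surjectivity and the index that controls injectivity, and between the two variants ($\Z_{p}$ and $\Q_{p}$), which are handled uniformly because Theorem \ref{borelserreboundary} provides the vanishing bound for both coefficient systems simultaneously. The proof should therefore be no more than a few lines.
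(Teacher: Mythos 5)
Your proof is correct and is exactly the intended argument: the paper states Corollary \ref{equality} without proof, immediately after Theorem \ref{borelserreboundary}, leaving the reader to substitute the boundary-vanishing bound $\tH^{m}_{\partial}(K^{\gp},-)=0$ for $m>D(\bG,-)$ into the long exact sequence of Corollary \ref{longexactsequence} and read off the two conclusions, just as you do. Your attention to the off-by-one between the $\tH^{n-1}_{\partial}$ term (controlling injectivity) and the $\tH^{n}_{\partial}$ term (controlling surjectivity) is precisely what separates the surjectivity range from the isomorphism range, and the exactness of $-\otimes_{\Z_{p}}\Q_{p}$ handles the two coefficient systems uniformly.
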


For a linear algebraic group $\bG$, we shall write $X_{\bG}$ for the
symmetric space $\bG(\R)/K_{\infty,\bG}$.
Recall the following:

\begin{theorem}[\cite{borel-serre}]
	If $\bG$ is reductive and the maximal split torus in the centre of
	$\bG$ is trivial, then the virtual cohomological dimension of
	arithmetic subgroups of $\bG$ is $\dim X_{\bG}-\rank_{\Q}\bG$.
\end{theorem}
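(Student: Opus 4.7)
The plan is to exhibit an explicit finite $K(\Gamma,1)$ for any torsion-free arithmetic subgroup $\Gamma\subset\bG(\Q)$, and to compute its cohomological dimension by combining Poincar\'e--Lefschetz duality with the Solomon--Tits theorem applied to the rational Tits building of $\bG$.

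First I would construct the Borel--Serre bordification $\overline{X}_{\bG}$ of the symmetric space $X_{\bG}=\bG(\R)/K_{\infty}$. This is a contractible manifold with corners obtained by adjoining, for each proper parabolic $\Q$-subgroup $\bP=\bM\bA\bN\subset\bG$, a boundary face $e(\bP)\simeq \bN(\R)\times X_{\bM}$, which is itself contractible. The group $\bG(\Q)$ acts properly on $\overline{X}_{\bG}$ extending its action on $X_{\bG}$, and when $\Gamma$ is torsion-free the action is free and cocompact. Consequently $\Gamma\backslash\overline{X}_{\bG}$ is a compact manifold with corners serving as a finite $K(\Gamma,1)$, which already gives the crude upper bound $\mathrm{cd}(\Gamma)\le\dim\overline{X}_{\bG}=\dim X_{\bG}=:d$.

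To sharpen this, I would invoke Poincar\'e--Lefschetz duality for the manifold-with-corners $\overline{X}_{\bG}$. Since $\overline{X}_{\bG}$ is contractible, orientable and of dimension $d$, duality together with the long exact sequence of the pair $(\overline{X}_{\bG},\partial\overline{X}_{\bG})$ identifies
\[
H^{i}(\Gamma;\Z\Gamma) \;\cong\; H^{i}_{\mathrm{c}}(\overline{X}_{\bG};\Z) \;\cong\; \tilde H_{d-1-i}(\partial\overline{X}_{\bG};\Z)
\]
for all $i<d$. Thus $\mathrm{cd}(\Gamma)$ is controlled by the smallest degree in which $\partial\overline{X}_{\bG}$ has non-vanishing reduced homology.

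The essential step is then to determine the homotopy type of $\partial\overline{X}_{\bG}$. Using the closed covering of $\partial\overline{X}_{\bG}$ by the faces $\overline{e(\bP)}$, whose intersection pattern is that of the poset of proper parabolic $\Q$-subgroups, one shows that $\partial\overline{X}_{\bG}$ is homotopy equivalent to the geometric realization $|T(\bG)|$ of the spherical Tits building of $\bG$ over $\Q$. The Solomon--Tits theorem then asserts that $|T(\bG)|$ has the homotopy type of a bouquet of spheres of dimension $r-1$, where $r=\rank_{\Q}\bG$. Plugging this back into the duality formula, $\tilde H_{d-1-i}(\partial\overline{X}_{\bG};\Z)$ is non-zero precisely when $d-1-i=r-1$, i.e.\ when $i=d-r$, so $\mathrm{cd}(\Gamma)=d-r$ as required. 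The main obstacle is the geometric construction of $\overline{X}_{\bG}$ as a $\bG(\Q)$-equivariant manifold with corners, together with the identification of the homotopy type of its boundary stratification with the rational Tits building; once these two ingredients are in place, the duality computation and the invocation of Solomon--Tits are essentially formal.
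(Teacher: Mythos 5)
Your sketch correctly reconstructs the Borel--Serre proof: the bordification $\overline{X}_{\bG}$ as a contractible manifold with corners with free cocompact $\Gamma$-action, Lefschetz duality identifying $H^i(\Gamma;\Z\Gamma)$ with the reduced homology $\tilde H_{d-1-i}(\partial\overline{X}_{\bG})$, and the Solomon--Tits theorem giving $\partial\overline{X}_{\bG}$ (equivalently the rational Tits building) the homotopy type of a wedge of $(r-1)$-spheres. The paper simply quotes this theorem from \cite{borel-serre} without reproducing a proof, and your outline is essentially the argument given there.
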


As a result, we clearly have for such groups
$\cd(\bG,\Z_{p})\le \dim X_{\bG}-\rank_{\bG}\bG$.
We now prove the same bound for $\cd_{\cpct}$.

\begin{theorem}
	\label{vanishingtheorem}
	Let $\bG/\Q$ be reductive and suppose the maximal $\Q$-split torus
	 in the centre of $\bG$ is trivial.
	Then
	$$
		\cd_{\cpct}(\bG,\Z_{p})
		\le
		\dim X_{\bG}-\rank_{\Q}\bG.
	$$
\end{theorem}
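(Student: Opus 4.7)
The plan is to induct on $\dim \bG$, combining the Borel--Serre virtual cohomological dimension bound $\cd(\bG, \Z_{p}) \le \dim X_{\bG} - \rank_{\Q} \bG$ cited just before the theorem with Corollary \ref{equality}~(2), which provides an isomorphism $\tH^{n}_{\cpct}(K^{p}, \Z_{p}) \cong \tH^{n}(K^{p}, \Z_{p})$ for every $n \ge D(\bG, \Z_{p}) + 2$. If I can prove the inductive bound
\[
  D(\bG, \Z_{p}) \le \dim X_{\bG} - \rank_{\Q} \bG - 1,
\]
then for any $n > \dim X_{\bG} - \rank_{\Q} \bG$ both inequalities are in force, forcing $\tH^{n}_{\cpct}(K^{p}, \Z_{p}) \cong \tH^{n}(K^{p}, \Z_{p}) = 0$, which is exactly the statement of the theorem.

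The base case is $\rank_{\Q} \bG = 0$: then no proper $\Q$-parabolic exists, $Y(K_{f})$ is compact by Borel--Harish-Chandra, so $\tH^{\bullet}_{\cpct} = \tH^{\bullet}$ and the vcd bound finishes the argument. For the inductive step I fix a proper $\Q$-parabolic $\bP = (\bA \bM) \ltimes \bN$. The factor $\bM$ is reductive with trivial maximal $\Q$-split central torus (by construction as the intersection of the kernels of the $\Q$-characters of $\bL$) and satisfies $\dim \bM < \dim \bG$, so the inductive hypothesis applies and gives $\cd_{\cpct}(\bM, \Z_{p}) \le \dim X_{\bM} - \rank_{\Q} \bM$. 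To convert this into a bound involving $\bG$, I will invoke three elementary identities: (a) the real Iwasawa decomposition $\bG(\R) = \bN(\R) \bA(\R) \bM(\R) K_{\infty}$ together with $K_{\infty, \bM} = \bM(\R) \cap K_{\infty}$ yields $\dim X_{\bG} = \dim \bN + \dim \bA + \dim X_{\bM}$; (b) since $\bA$ is $\Q$-split, $\dim \bA = \rank_{\Q} \bA$; (c) since $\bA$ is the maximal $\Q$-split subtorus of the centre of $\bL$ while $\bM$ has trivial $\Q$-split centre, a maximal $\Q$-split torus of $\bG$ is obtained as $\bA$ times a maximal $\Q$-split torus of $\bM$, so $\rank_{\Q} \bG = \rank_{\Q} \bA + \rank_{\Q} \bM$. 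Subtracting gives
\[
  \dim X_{\bG} - \rank_{\Q} \bG \;=\; \dim \bN + \bigl(\dim X_{\bM} - \rank_{\Q} \bM\bigr) \;\ge\; \dim \bN + \cd_{\cpct}(\bM, \Z_{p}),
\]
and properness of $\bP$ forces $\dim \bN \ge 1$; taking the maximum over proper $\Q$-parabolics closes the induction.

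The only genuine obstacle is the dimension accounting in the previous paragraph, and within it the rank identity $\rank_{\Q} \bG = \rank_{\Q} \bA + \rank_{\Q} \bM$. I would verify it by noting that $\bA$ (being central in $\bL$) commutes with any chosen maximal $\Q$-split torus of $\bM$, so their product is a $\Q$-split torus of $\bG$ of rank $\rank_{\Q} \bA + \rank_{\Q} \bM$; its maximality in $\bG$ follows because enlarging it would produce a non-trivial $\Q$-split character of $\bL$ trivial on $\bA$, contradicting the vanishing of the $\Q$-split centre of $\bM$. Granted this accounting, everything else is formal bookkeeping combining the Borel--Serre vcd bound with Corollary \ref{equality}, and no additional input is required.
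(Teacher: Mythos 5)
Your proposal is correct and follows essentially the same route as the paper: reduce to the bound $D(\bG,\Z_p)\le\dim X_{\bG}-\rank_{\Q}\bG-1$ via the Iwasawa dimension count on a proper parabolic $\bP=\bM\bA\bN$ and the rank identity $\rank_{\Q}\bG=\rank_{\Q}\bA+\rank_{\Q}\bM$, then combine Corollary~\ref{equality}(2) with the Borel--Serre vcd bound for ordinary cohomology. The only cosmetic difference is that you induct on $\dim\bG$ while the paper inducts on $\rank_{\Q}\bG$; both decrease strictly when passing to $\bM$, so either works.
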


\begin{proof}
	We prove this by induction on the rational rank of $\bG$.
	If $\bG$ has rank zero, then the result is trivial.
	Suppose the result is true for all groups of smaller rank then $\bG$.
	Let $\bP=\bM\bA\bN$ be a proper parabolic subgroup of $\bG$.
	By the Iwasawa decomposition,
	 we have a parametrization of the symmetric space $X_{\bG}$
	 as follows:
	$$
		X_{\bG}
		=
		X_{\bM}\times \bA(\R)^{\circ}\times \bN(\R).
	$$
	This implies $\dim X_{\bG}=\dim X_{\bM}+\rank_{\Q}\bA+\dim\bN$.
	By the inductive hypothesis, we have
	$$
		\cd_{\cpct}(\bM,\Z_{p})
		\le
		\dim X_{\bM}-\rank_{\Q}\bM.
	$$
	On the other hand $\rank_{\Q}\bG=\rank_{\Q}\bA+\rank_{\Q}\bM$.
	Combining these facts, we find that
	$$
		\cd_{\cpct}(\bM,\Z_{p})
		\le
		\dim X_{\bG}-\rank_{\Q}\bG-\dim \bN.
	$$
	In particular we have $D(\bG) <\dim X_{\bG}-\rank_{\Q}\bG$.
	By Corollary \ref{equality}, we have for $n>\dim X_{\bG}-\rank_{\Q}\bG$:
	$$
		\tH^{n}_{\cpct}(\bG,\Z_{p})
		=
		\tH^{n}(\bG,\Z_{p}).
	$$
	However, the right hand side is zero, since $n$ is larger than $\cd(\bG,\Z_{p})$.
\end{proof}

\begin{proposition}
	\label{topdimension}
	Let $\bG/\Q$ be a reductive group, such that the centre of $\bG$
	has rational rank zero.
	If the real rank of $\bG$ is positive then
	$$
		\cd(\bG,\Z_{p}),\cd_{\cpct}(\bG,\Z_{p})
		\le
		\dim X_{\bG}-1.
	$$
\end{proposition}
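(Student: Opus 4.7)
The proof divides naturally into the cases $\rank_{\Q}(\bG)\geq 1$ and $\rank_{\Q}(\bG)=0$.

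If $\rank_{\Q}(\bG)\geq 1$, the bound on $\cd_{\cpct}(\bG,\Z_{p})$ is a special case of Theorem \ref{vanishingtheorem}. For $\cd(\bG,\Z_{p})$, each $Y(K_{f})$ is homotopy equivalent to a CW complex of dimension $\dim X_{\bG}-\rank_{\Q}(\bG)$, namely the $\Gamma(K_{f})$-quotient of the bordified symmetric space \cite{borel-serre}; hence $H^{n}(Y(K_{f}),\Z/p^{s})=0$ and so $\tH^{n}(\bG,\Z_{p})=0$ for $n>\dim X_{\bG}-\rank_{\Q}(\bG)$, which is stronger than the claim.

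The interesting case is $\rank_{\Q}(\bG)=0$: every $Y(K_{f})$ is compact, so $\cd(\bG,\Z_{p})=\cd_{\cpct}(\bG,\Z_{p})$ and it suffices to prove $\tH^{d}(\bG,\Z_{p})=0$ with $d=\dim X_{\bG}$. Fix a tame level $K^{p}$ and take $K_{p}$ torsion-free. Since $\bG(\R)^{\circ}$ is connected, its action on $X_{\bG}$ is by orientation-preserving diffeomorphisms, so each component of $Y(K_{p}K^{p})$ is a closed oriented $d$-manifold, and $H^{d}(Y(K_{p}K^{p}),\Z/p^{s})$ is a finite direct sum of copies of $\Z/p^{s}$. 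I would then shrink $K_{p}$ inside a uniform pro-$p$ subgroup of $\bG(\Q_{p})$ and let $K_{p}'$ range over open normal subgroups of $K_{p}$ of $p$-power index. On each component, the restricted cover $Y(K_{p}'K^{p})\to Y(K_{p}K^{p})$ has degree $[\Gamma(K_{p}K^{p}):\Gamma(K_{p}K^{p})\cap K_{p}']$, a $p$-power dividing $[K_{p}:K_{p}']$, and the induced pullback on top cohomology is multiplication by that degree.

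The crux is that these $p$-power degrees can be made arbitrarily large. The inclusion $\bG(\Q)\hookrightarrow\bG(\Q_{p})$ gives $\bigcap_{K_{p}'}(\Gamma(K_{p}K^{p})\cap K_{p}')=\{1\}$, while $\Gamma(K_{p}K^{p})$ is infinite because it is a cocompact lattice in the non-compact Lie group $\bG(\R)^{\circ}$ (the non-compactness using $\rank_{\R}\bG\geq 1$). Hence the degrees tend to infinity; since $Y(K_{p}K^{p})$ has only finitely many components, for every $s$ one may choose $K_{p}'$ so that every component-wise degree exceeds $p^{s}$, making the pullback vanish modulo $p^{s}$. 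This forces the direct limit over $K_{p}$ of $H^{d}(Y(K_{p}K^{p}),\Z/p^{s})$ to vanish; applying $\limprojs$ and then the direct limit over $K^{p}$ yields $\tH^{d}(\bG,\Z_{p})=0$. The main technical obstacle is the bookkeeping that identifies the geometric degree of the cover on each component of $Y(K_{p}K^{p})$ with the arithmetic index above: one needs $K_{p}'$ normal in $K_{p}$ so that all preimage components above a fixed base component have equal degree, and one must check that the orientations pull back compatibly. Once this is done, the argument uses no input beyond results already developed in this paper.
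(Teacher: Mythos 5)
Your proof is correct and follows essentially the same approach as the paper's: in both, the crux is that along a tower of closed $d$-manifold covers the pullback on $H^{d}(\cdot,\Z/p^{s})$ is multiplication by the $p$-power covering degree, which grows without bound and therefore kills the direct limit. The paper streamlines the disconnectedness bookkeeping by first applying Theorem \ref{excise} to replace $K_{p}$ with the closure $G$ of the image of $\Gamma$ (making every cover in the tower connected), whereas you carry out the component-by-component argument directly; this is a cosmetic rather than a substantive difference.
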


\begin{proof}
	If $\bG$ has positive rational rank, then this
	follows from our previous vanishing theorem.
	We therefore assume that $\bG$ has rational rank zero.
	In this case the arithmetic quotients $Y_{\bG}(K_{f})$ are
	compact, so we have $\cd(\bG,-)=\cd_{\cpct}(\bG,-)$.
	Choose a level $K_{f}=K^{p}K_{p}$, where $K_{p}$ is torsion-free.
	It is sufficient to prove that
	$$
		H^{d}(Y(K_{f}),\scC(K_{p}))=0,
		\qquad
		d=\dim X_{\bG}.
	$$
	Choose a component $Y_{0}$ of $Y(K_{f})$
	and let $\Gamma$ be the fundamental group of $Y_{0}$.
	We shall write $G$ for the closure of the image of $\Gamma$ in $K_{p}$.
	We have by Theorem \ref{excise}:
	$$
		H^{d}(Y_{0},\scC(K_{p}))
		=
		\hind_{G}^{K_{p}}
		H^{d}(Y_{0},\scC(G)).
	$$
	It is therefore sufficient to prove that $H^{d}(Y_{0},\scC(G))=0$.
	Choose a filtration
	$$
		G=G_{0}\supset G_{1}\supset \cdots,
	$$
	and for each $r$ let $Y_{r}$ be the corresponding cover of $Y_{0}$.
	Since the image of $\Gamma$ is dense in $G$,
	it follows that $Y_{r}$ is connected for every $r$.
	We therefore have $H^{d}(Y_{r},\Z/p^{s})=\Z/p^{s}$.
	For $r<t$ the map $H^{d}(Y_{r},\Z/p^{s})\to H^{d}(Y_{t},\Z/p^{s})$
	is multiplication by the index $[G:G_{r}]$.
	Since $G$ is locally a pro-$p$ group, it follows that
	$$
		\limdr H^{d}(Y_{r},\Z/p^{s})
		=
		0.
	$$
	Using Emerton's formula, we have:
	$$
		H^{d}(Y_{0},\scC(G))
		=
		\tH^{d}(Y_{0},\Z_{p})
		=
		\limprojs\limdr H^{d}(Y_{r},\Z/p^{s})
		=
		0.
	$$
\end{proof}

Recall that the congruence kernel $\Cong(\bG)$ of $\bG$
 is defined to be the kernel of the map
$$
	\limp{\Gamma \ arithmetic}
	\bG(\Q)/\Gamma
	\to
	\limp{K_{f}}
	\bG(\Q)/\Gamma(K_{f}).
$$
This measures the difference between the set of arithmetic subgroups
of $\bG$ and the set of congruence subgroups.
If $\bG$ is not simply connected, then the congruence kernel is
infinite.
Serre made the following conjecture:

\begin{conjecture}[\cite{serre-sl2}]
	Let $\bG/\Q$ be simple and simply connected, and assume that the real rank
	of $\bG$ is positive.
	Then $\Cong(\bG)$ is finite if and only if the real rank of $\bG$ is
	at least $2$.
\end{conjecture}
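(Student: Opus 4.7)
\emph{Plan.} This is Serre's congruence subgroup problem, which has been studied intensely since the 1960's and is still not known in full generality; what I sketch below is the outline of the standard attack due to Bass--Milnor--Serre, Matsumoto, Raghunathan, Rapinchuk--Platonov and others, rather than a new approach. I would split into the two implications, which require quite different techniques.

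For ``real rank at least $2$ implies $\Cong(\bG)$ finite'', I would first invoke strong approximation (Kneser--Platonov), valid because $\bG$ is simple, simply connected, and of positive real rank, to identify the congruence completion of $\bG(\Q)$ with a suitable quotient of $\bG(\A_{f})$. The task then reduces to controlling the metaplectic kernel of the resulting central extension. For classical groups one uses Steinberg-type generators together with Matsumoto's presentation of $K_{2}$ to write the congruence completion explicitly; the hypothesis $\rank_{\R}\bG\ge 2$ enters crucially in supplying a second commuting copy of $\SL_{2}$, whose relations then collapse the metaplectic kernel down to a finite group of roots of unity. For exceptional groups one would follow Raghunathan's deformation argument, which shows that any non-central normal subgroup of finite index in $\bG(\Q)$ must contain a congruence subgroup.

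For ``real rank $1$ implies $\Cong(\bG)$ infinite'', the arithmetic subgroups of $\bG$ are lattices in a rank-one real Lie group, and one expects them to have far more finite-index subgroups than the congruence completion can accommodate. Serre's original argument for $\SL_{2}/\Q$ exploits the action on the Bruhat--Tits tree of $\SL_{2}(\Q_{p})$ to construct explicit non-congruence subgroups via Bass--Serre theory, and the same circle of ideas, possibly combined with separability of geometrically finite subgroups, should handle the other rank-one $\Q$-forms.

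The main obstacle is that the real-rank-at-least-$2$ direction is genuinely open in several cases (notably $E_{8}$ and certain inner forms of type $A_{n}$), and even in the settled cases the proof draws on substantial input from algebraic $K$-theory and the representation theory of $p$-adic groups, with no known ``soft'' argument. Within the framework of the present paper, the conjecture translates via the introduction's formula $\tH^{1}(\bG,\Q_{p})=\Hom_{\cts}(\Cong(\bG),\Q_{p})_{\bG(\A_{f}^{\gp})-\smooth}$ into a vanishing statement for $\tH^{1}(\bG,\Q_{p})$ in the higher-rank case, so the techniques developed here at least provide a natural home for the conjecture, even if they do not resolve it.
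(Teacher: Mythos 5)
This statement is a \emph{conjecture} in the paper, attributed to Serre, and the paper offers no proof of it --- it is cited only as background to motivate Proposition \ref{nextdimensiondown} and the concluding conjecture. So there is no argument in the paper to compare against; the appropriate response in a blind exercise is simply to recognize that the statement is open and explain why. You do this correctly: you identify it as the congruence subgroup problem, you note (accurately) that the higher-rank direction is known in many cases via Bass--Milnor--Serre, Matsumoto, Raghunathan and Prasad--Rapinchuk but remains open in others, and you correctly flag the rank-one direction as resting on Serre's tree-theoretic construction for $\SL_2$ and its generalizations. Your final remark connecting it to the formula $\tH^{1}(\bG,\Q_{p})=\Hom_{\cts}(\Cong(\bG),\Q_{p})_{\bG(\A_{f}^{\gp})-\smooth}$ from the introduction is also apt and matches how the paper uses the conjecture. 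In short: there is no gap in your response, because there is no proof to be had; you were right not to claim one.

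One small caution: what you call the ``congruence completion'' identification via strong approximation holds for $\bG(\A_{f})$ modulo the congruence kernel, and the metaplectic kernel governs only the \emph{central} part of the extension; Raghunathan's and Prasad--Rapinchuk's work on the centrality of the congruence kernel is a logically separate step that your sketch slightly conflates with the computation of the metaplectic kernel. This is a presentational point rather than an error, but worth keeping straight if you ever expand the sketch.
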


\begin{proposition}
	\label{nextdimensiondown}
	Let $\bG/\Q$ be a semi-simple group of positive real rank.
	If the congruence kernel of the universal cover of $\bG$ is finite
	 then
	$$
		\cd(\bG,\Z_{p}),\cd_{\cpct}(\bG,\Z_{p})
		\le
		\dim X_{\bG}-2.
	$$
	In particular this holds for groups of real rank $\ge 2$ satisfying Serre's
	conjecture.
\end{proposition}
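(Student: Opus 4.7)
The plan is to mirror the proof of Proposition~\ref{topdimension}: reduce via Theorem~\ref{universalcover} to the simply connected case, separate the contributions of the interior and the Borel--Serre boundary, and apply Poincar\'e--Lefschetz duality to translate the degree-$(d-1)$ vanishing into a question about abelianizations of congruence subgroups, where the hypothesis on the congruence kernel can be invoked via the formula from the introduction.

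First, by Theorem~\ref{universalcover} (whose argument is unchanged for compactly supported cohomology), $\tH^{n}_{\ast}(K^{\gp}_{\bG},\Z_{p})$ is a finite direct sum of copies of $\tH^{n}_{\ast}(K^{\gp}_{\tilde\bG},\Z_{p})$ for $\tilde\bG$ the universal cover of $\bG$, so $\cd_{\ast}(\bG,\Z_{p}) = \cd_{\ast}(\tilde\bG,\Z_{p})$. Replacing $\bG$ by $\tilde\bG$, I may assume $\bG$ is simply connected with $\Cong(\bG)$ finite. Proposition~\ref{topdimension} then gives $\cd_{\ast}(\bG,\Z_{p}) \le d-1$ for $d = \dim X_{\bG}$, and it suffices to show $\tH^{d-1}(\bG,\Z_{p}) = \tH^{d-1}_{\cpct}(\bG,\Z_{p}) = 0$. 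In the cocompact case $\rank_{\Q}\bG = 0$ the two cohomologies agree; in the positive rank case, the computation in the proof of Theorem~\ref{vanishingtheorem} gives $\cd_{\cpct}(\bM,\Z_{p}) \le d - \rank_{\Q}\bG - \dim\bN \le d-2$ for every proper parabolic $\bP = \bM\bA\bN$, whence $D(\bG,\Z_{p}) \le d-2$. Theorem~\ref{borelserreboundary} then gives $\tH^{d-1}_{\partial}(\bG,\Z_{p}) = 0$, and Corollary~\ref{longexactsequence}, combined with $\tH^{d}_{\cpct}(\bG,\Z_{p}) = 0$, reduces $\tH^{d-1}(\bG,\Z_{p}) = 0$ to $\tH^{d-1}_{\cpct}(\bG,\Z_{p}) = 0$.

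For the key vanishing $\tH^{d-1}_{\cpct}(\bG,\Z_{p}) = 0$, I would choose a torsion-free level $K_{\gp}$ small enough that each component $Y_{0}$ of $Y(K_{f})$ is orientable. On the tower $Y_{r}$, Poincar\'e--Lefschetz duality on the compact orientable $d$-manifolds with corners $Y_{r}^{\BS}$ provides natural isomorphisms
$$H^{d-1}_{\cpct}(Y_{r},\Z/p^{s}) \;\cong\; H_{1}(Y_{r}^{\BS},\Z/p^{s}) \;=\; \Gamma_{r}^{\ab}/p^{s},$$
identifying $\tH^{d-1}_{\cpct}(Y_{0},\Z_{p})$ with $\limprojs\limdr \Gamma_{r}^{\ab}/p^{s}$, the direct limit running along the transfer maps $V_{rt}\colon \Gamma_{r}^{\ab}/p^{s} \to \Gamma_{t}^{\ab}/p^{s}$. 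The introduction's formula yields $\tH^{1}(\bG,\Q_{p}) = \Hom_{\cts}(\Cong(\bG),\Q_{p})_{\bG(\A_{f}^{\gp})-\smooth} = 0$, which dually kills $\tH^{d-1}_{\cpct}(\bG,\Q_{p})$. Combining this with the transfer identity $\iota_{\ast}\circ V_{rt} = [n_{rt}]$, where $n_{rt} = [\Gamma_{r}:\Gamma_{t}]$ is a large $p$-power for $t \gg r$ since $G$ is locally pro-$p$, should then force $\limdr \Gamma_{r}^{\ab}/p^{s} = 0$ for every $s$, and hence the vanishing of the inverse limit.

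The principal obstacle is exactly this last step: upgrading the clean $\Q_{p}$-vanishing (immediate from the introduction's formula) to the $\Z_{p}$-vanishing of the transfer limit. Ruling out surviving $p$-primary torsion in $\limprojs\limdr \Gamma_{r}^{\ab}/p^{s}$ is where the hypothesis that $\Cong(\bG)$ is \emph{finite}, rather than merely $\Q_{p}$-trivial, enters essentially; this will require combining the transfer identities with the admissibility structure on Emerton's spaces and the pro-$p$ structure of $G$.
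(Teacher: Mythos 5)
Your proposal follows the paper's strategy up to a point: the reduction to the simply connected case via Theorem~\ref{universalcover}, the use of Proposition~\ref{topdimension} to get $\cd_{\cpct}\le d-1$, and the Poincar\'e--Lefschetz identification $H^{d-1}_{\cpct}(Y_{r},\Z/p^{s})\cong \Gamma_{r}^{\ab}\otimes\Z/p^{s}$ with the transfer maps as connecting maps. But the proposal stops exactly where the real work begins, and what you flag as the ``principal obstacle'' is not a technicality you could defer --- it is the actual content of the proof, and your suggested route to it does not close the gap. There is no Poincar\'e duality pairing of $\tH^{1}(\bG,\Q_{p})$ against $\tH^{d-1}_{\cpct}(\bG,\Q_{p})$ proved or used anywhere in the paper (and the introduction's formula for $\tH^{1}$ is only stated for simple, simply connected groups), so the claim that the $\Q_{p}$-vanishing ``dually kills'' $\tH^{d-1}_{\cpct}(\bG,\Q_{p})$ is unjustified; and even if it held, it would not touch the $\Z_{p}$-statement, which is the one asserted.

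What the paper actually does with the finiteness of $\Cong(\bG)$ is different from what you sketch. Because $\Cong(\bG)$ is finite, one may shrink $K_{f}$ so that the central extension $1\to\Cong(\bG)\to\tilde K_{f,r}\to G_{r}K^{p}\to 1$ splits; this yields a direct-sum decomposition
$$
H_{1}(\Gamma_{r},\Z/p^{s})\;\cong\;\bigl(\Cong(\bG)\oplus(K^{p})^{\ab}\bigr)\otimes\Z/p^{s}\;\oplus\;\bigl(G_{r}^{\ab}\otimes\Z/p^{s}\bigr).
$$
On the first summand the transfer $\verl^{r}_{t}$ is multiplication by $[G_{r}:G_{t}]$, a growing power of $p$, so that contribution dies in the direct limit over $r$ (this is the step where, in your language, the ``admissibility structure'' is invisible --- it is just the pro-$p$ index). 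The second summand is the genuinely hard part, and it is killed not by an admissibility argument but by Lazard's theorem that the $G_{r}$ are Poincar\'e duality groups at $p$ of dimension $\dim\bG$, which forces $\limdr\Hom_{\Z}(H^{1}_{\cts}(G_{r},\Z/p),\Q/\Z)=0$ since $1\ne\dim\bG$. This Lazard input is missing from your proposal and cannot be replaced by ``admissibility structure on Emerton's spaces''; without it you cannot rule out $p$-primary torsion surviving in $\limdr G_{r}^{\ab}\otimes\Z/p^{s}$. Finally, the deduction of the ordinary-cohomology bound from the compactly supported one is done in the paper by Corollary~\ref{equality} after checking $d\ge D(\bG,\Z_{p})+2$ via the Iwasawa decomposition, not by passing through Corollary~\ref{longexactsequence} as you suggest; the two are closely related, but your version requires the boundary vanishing $\tH^{d-1}_{\partial}=0$ and Corollary~\ref{equality} gives the implication directly.
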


\begin{proof}
	By Theorem \ref{universalcover} it is sufficient to prove this
	 for simply connected groups,
	 so we assume that $\bG$ is simply connected.
	It follows that the congruence kernel of $\bG$ is finite
	and $\bG$ satisfies strong approximation.
	Let $d=\dim X_{\bG}$.
	Proposition \ref{topdimension} shows that $\tH_{\cpct}^{d}(\bG,\Z_{p})=0$.
	To prove the result in the compactly supported case, we must
	show that $\tH^{d-1}_{\cpct}(\bG,\Z_{p})=0$.
	Choose a level $K_{f}=K^{p}K_{p}$ with $K_{p}$ torsion-free.
	The arithmetic quotient $Y(K_{f})$ is connected with fundamental
	group $\Gamma=\Gamma(K_{f})$.
	By Poincar\'e duality, we have
	$$
		H^{d-1}_{\cpct}(Y(K_{f}),\Z/p^{s})
		=
		H_{1}(\Gamma,\Z/p^{s})
		=
		\Gamma^{\ab}\otimes_{\Z}(\Z/p^{s}).
	$$
	Here we are using the notation $\Gamma^{\ab}=\Gamma/[\Gamma,\Gamma]$
	for the abelianization of $\Gamma$.
	Choose a filtration of open normal subgroups of $K_{p}$:
	$$
		K_{p}
		=
		G_{0}\supset G_{1}\supset \cdots,
	$$
	and let $\Gamma_{r}$ be the preimage of $G_{r}$ in $\Gamma$.
	By strong approximation, the map $\Gamma\to K_{p}$ induces
	isomorphisms on the quotients: $\Gamma_{t}/\Gamma_{r}=G_{t}/G_{r}$.
	By Emerton's formula, we have
	$$
		\tH_{\cpct}(K^{p},\Z_{p})
		=
		\limprojs\limdr H_{1}(\Gamma_{r},\Z/p^{s}).
	$$
	Note that we have a central extension of profinite groups:
	$$
		1 \to \Cong(\bG) \to \tilde K_{f,r} \to G_{r}K^{p} \to 1,
	$$
	where $\tilde K_{f,r}$ is the profinite completion of $\Gamma_{r}$.
	As a consequence of this we have
	$$
		\Gamma_{r}^{\ab}\otimes \Zps
		=
		\tilde K_{f,r}^{\ab}\otimes \Zps.
	$$
	Since $\Cong(\bG)$ is finite,
	we may choose $K_{f}$ small enough so that the central extension splits.
	It follows that
	$$
		H_{1}(\Gamma_{r}, \Zps)
		=
		(\Cong(\bG)\oplus (K^{p})^{\ab})\otimes\Zps
		\oplus
		(G_{r}^{\ab}\otimes \Zps).
	$$
	Let $\verl^{r}_{t}$ denote the transfer map
	from $H_{1}(\Gamma_{r},\Zps)$ to $H_{1}(\Gamma_{t},\Zps)$.
	For an element $\sigma\in (\Cong(\bG)\oplus (K^{p})^{\ab})\otimes\Zps$,
	we have
	$$
		\verl^{r}_{t}(\sigma)
		=
		[G_{r}:G_{t}]\sigma.
	$$
	It follows that such classes vanish in the
	limit over the levels $G_{r}$, and we have:
	$$
		\limdr
		H_{1}(\Gamma_{r},\Zps)
		=
		\limdr
		(G_{r}^{\ab}\otimes \Zps).
	$$	
	If $r$ is large enough then $G_{r}$ is a pro-$p$ group, and
	$G_{r}^{\ab}$ is always finite, so we have:
	$$
		\limdr
		H_{1}(\Gamma_{r},\Zps)
		=
		\limdr
		\Hom_{\Z}(H^{1}_{\cts}(G_{r},\Zps), \Q/\Z).
	$$
	Recall that by a theorem of Lazard
	 (Cor. 4.8 of Annexe 2.4 of \cite{serre-galoiscohomology}),
	 the groups $G_{r}$	are \emph{Poincar\'e duality groups at $p$}
	 of dimension $n=\dim \bG$.
	In particular, by Prop. 4.3 of Annexe 2.4 of \cite{serre-galoiscohomology},
	we have:
	$$
		\limdr
		\Hom_{\Z}(H^{q}_{\cts}(G_{r},\Z/p),\Q/\Z)
		=
		0
		\quad
		\hbox{for $q\ne \dim_{\Q}\bG$.}
	$$
	In particular, this holds for $q=1$.
	By induction on $s$, we deduce that
	$$
		\limdr
		\Hom_{\Z}(H^{1}_{\cts}(G_{r},\Z/p^{s}),\Q/\Z)
		=
		0.
	$$
	The result follows in the compactly supported case.
	To prove the result in the case of usual cohomology, we
	apply Corollary \ref{equality}.
	We need only check that $d\ge D(\bG,\Z_{p})+2$.
	Suppose $\bP=\bM\bA\bN$ is a proper parabolic subgroup of $\bG$.
	We have from the Iwasawa decomposition:
	$$
		X_{\bG}=X_{\bM}\bA(\R)^{\circ}\bN(\R).
	$$
	In particular since neither $\bA$ nor $\bN$ is trivial, we have
	$$
		d \ge \dim(X_{\bM})+2.
	$$
	We have $\cd_{\cpct}(\bM,\Z_{p})\le \dim(X_{\bM})$,
	so the result follows.
\end{proof}

In view of the previous results,
 the author is are lead to the following conjecture:

\begin{conjecture}
	Let $\bG/\Q$ be a reductive group whose centre has
	rational rank zero.
	Then
	$$
		\cd(\bG,\Z_{p}),
		\cd_{\cpct}(\bG,\Z_{p})
		\le
		\dim X_{\bG}-\rank_{\R}\bG.
	$$
\end{conjecture}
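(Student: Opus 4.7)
My plan is to reduce the conjecture to a statement about $\cd_{\cpct}$ alone and then attack it by induction on $\rank_{\R}\bG$, using Poincar\'e duality on arithmetic manifolds together with Lazard's duality for uniform pro-$p$ groups. For the reduction, suppose inductively that the bound $\cd_{\cpct}(\bM,-)\le \dim X_{\bM}-\rank_{\R}\bM$ is known for all reductive $\bM/\Q$ with anisotropic centre and $\rank_{\R}\bM<\rank_{\R}\bG$. For any proper parabolic $\bP=\bM\bA\bN\subset\bG$, the Iwasawa decomposition $X_{\bG}=X_{\bM}\times\bA(\R)^{\circ}\times\bN(\R)$ together with the $\Q$-splitness of $\bA$ gives $\dim X_{\bG}=\dim X_{\bM}+\rank_{\Q}\bA+\dim\bN$ and $\rank_{\R}\bG=\rank_{\R}\bM+\rank_{\Q}\bA$, so
$$
D(\bG,-)\le \dim X_{\bG}-\rank_{\R}\bG-\dim\bN\le \dim X_{\bG}-\rank_{\R}\bG-1.
$$
Corollary \ref{equality} then makes $\tH^{n}_{\cpct}\to\tH^{n}$ an isomorphism for every $n>\dim X_{\bG}-\rank_{\R}\bG$, while Theorem \ref{borelserreboundary} kills $\tH^{n-1}_{\partial}$ in the same range; running the long exact sequence (\ref{exactseq1}) in either direction shows the bound on $\tH^{\bullet}_{\cpct}$ is equivalent to the bound on $\tH^{\bullet}$, so it suffices to prove the former.

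For the inductive step, by Theorem \ref{universalcover} I reduce to $\bG$ simple and simply connected, so that strong approximation holds and, in the real-rank $\ge 2$ range covered by Serre's conjecture, $\Cong(\bG)$ is finite. Fix a torsion-free level $K_f=K_pK^p$, a filtration $G_0\supset G_1\supset\cdots$ of $K_p$ by open normal pro-$p$ subgroups, and let $\Gamma_r$ be the preimage of $G_r$ in $\Gamma(K_f)$. Poincar\'e duality on the oriented manifold $Y_{\bG}(K_pK^p)$ converts the target into a homology vanishing, $H^{d-i}_{\cpct}(Y_r,\Z/p^s)=H_i(\Gamma_r,\Z/p^s)$ with $d=\dim X_{\bG}$, and Emerton's formula reduces the problem to
$$
\limp{s}\limd{r} H_i(\Gamma_r,\Z/p^s)=0 \qquad \text{for every } i<\rank_{\R}\bG.
$$
For large $r$, Lazard's theorem makes $G_r$ a Poincar\'e duality pro-$p$ group of dimension $n=\dim\bG$, and Proposition~4.3 of Annexe 2.4 of \cite{serre-galoiscohomology} kills $\limdr H^{q}(G_r,\Z/p)$ for $q<n$, giving $\limdr H_i(G_r,\Z/p)=0$ for $i>0$. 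A Hochschild--Serre spectral sequence for the central extension $1\to\Cong(\bG)\to\tilde K_{f,r}\to G_rK^p\to 1$, together with the finiteness of $\Cong(\bG)$ and an analysis of the transfer maps $\verl^r_t$ modelled on the proof of Proposition \ref{nextdimensiondown}, should transfer this vanishing from $G_r$ to $\Gamma_r$ once the contributions of the fixed tame level and the congruence kernel die in the direct limit.

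The main obstacle is that Lazard's $p$-adic duality distinguishes only the top degree $n$ of the pro-$p$ group from the rest, and (as in the proof of Proposition \ref{nextdimensiondown}) this buys only one extra degree of vanishing past the naive bound $\dim X_{\bG}-\rank_{\Q}\bG$. To reach $\dim X_{\bG}-\rank_{\R}\bG$ one needs $\rank_{\R}\bG-\rank_{\Q}\bG$ further degrees of vanishing, and these are invisible to purely $p$-adic techniques: they must come from the real parabolic structure of $\bG(\R)$, specifically from the anisotropic kernel where $\rank_{\R}>\rank_{\Q}$. Plausible routes are a $p$-adic analogue of the Matsushima--Borel vanishing for the cohomology of arithmetic lattices below their real rank, with real coefficients replaced by the local system $\scC(K_p)$, or a controlled vanishing of continuous low-degree cohomology of the pro-$p$ groups $G_r$ with Banach-space coefficients that can then be bootstrapped to all degrees below $\rank_{\R}\bG$. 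Either route requires genuinely new input beyond the sheaf-theoretic and pro-$p$ methods developed in this paper.
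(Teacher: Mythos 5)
This statement is labelled a \emph{Conjecture} in the paper, and it is not proved there. You have, quite correctly, not produced a proof either; the last paragraph of your proposal is an honest and accurate diagnosis of exactly why the methods of the paper fall short. Your review should therefore be graded not on whether you completed the argument but on whether you correctly identified the state of the art and the obstruction, and on that score your analysis is essentially right.

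Your reduction steps closely parallel the paper's own evidence for the conjecture. The reduction to $\cd_{\cpct}$ via the boundary bound and the long exact sequence is Proposition \ref{vanishingtheoremconj}; the identity $\rank_{\R}\bG=\rank_{\R}\bM+\rank_{\Q}\bA$ for a $\Q$-Levi decomposition is correct (since $\bA$ is $\Q$-split it is also $\R$-split, and any Levi of an $\R$-parabolic contains a maximal $\R$-split torus of $\bG$), so your estimate $D(\bG,-)\le\dim X_{\bG}-\rank_{\R}\bG-1$ under the inductive hypothesis is valid. The reduction to simply connected groups via Theorem \ref{universalcover}, and the Poincar\'e--Lazard mechanism, reproduce the argument for Proposition \ref{nextdimensiondown}. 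One caveat you should make explicit: your induction runs over all reductive $\bG$ with anisotropic centre, but Theorem \ref{universalcover} only reduces the \emph{semi-simple} case to the simply connected one. A general reductive $\bG$ with anisotropic centre has a nontrivial (anisotropic) torus in its centre, and handling that factor requires the torus case of the conjecture, which by Corollary \ref{leopoldt} is exactly Leopoldt's conjecture for $\bT$. The paper itself notes that, granting Leopoldt, the conjecture reduces to the semi-simple case; your reduction silently uses this.

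Where your proposal stops, so does the paper: Lazard's Poincar\'e duality for the pro-$p$ groups $G_{r}$ distinguishes only the top degree $\dim\bG$, which via the transfer argument yields exactly one extra degree of vanishing beyond $\dim X_{\bG}-\rank_{\Q}\bG$. This plus Serre's conjecture gives the result when $\rank_{\R}\bG\le\rank_{\Q}\bG+1$ (Propositions \ref{topdimension} and \ref{nextdimensiondown}), and Theorem \ref{vanishingtheorem} gives it when $\rank_{\R}\bG=\rank_{\Q}\bG$. Pushing further to $\dim X_{\bG}-\rank_{\R}\bG$ requires $\rank_{\R}\bG-\rank_{\Q}\bG$ additional degrees of vanishing, and you have correctly identified that these are invisible to the purely $p$-adic and sheaf-theoretic techniques developed here, since the gap between real and rational rank lives in the $\R$-anisotropic kernel which the $\Q$-parabolic structure does not see. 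The paper offers no mechanism to bridge this gap; finding one would genuinely extend the results of the paper rather than merely re-derive them.
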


We have proved the conjecture for groups whose
real rank is equal to the rational rank (Theorem \ref{vanishingtheorem}),
and in particular for all split groups.
We have also proved the conjecture for groups of real rank $\le 1$
(Proposition \ref{topdimension}).
We have shown (Proposition \ref{nextdimensiondown})
 that for groups of real rank $2$,
 the conjecture follows from Serre's conjecture on congruence kernels.
We have shown (Theorem \ref{universalcover})
 that to prove the conjecture for a semi-simple group, it is sufficient to prove it for the
 universal cover.
We have also shown (Corollary \ref{leopoldt})
 that for a torus, the conjecture is equivalent to Leopoldt's conjecture.
Indeed if Leopoldt's conjecture is correct, then it would be sufficient
to prove the conjecture for semi-simple, groups.
Finally, we show that the usual and compactly supported cases of the conjecture
are equivalent:

\begin{proposition}
	\label{vanishingtheoremconj}
	Let  ``$-$'' denote either $\Z_{p}$ or $\Q_{p}$.
	The following statements are equivalent:
	\begin{enumerate}
		\item
		for every reductive group $\bG/\Q$ such that the maximal $\Q$-split torus
		 in the centre of $\bG$ is trivial,
		 $\cd(\bG,-)\le \dim(X_{\bG})-\rank_{\R}(\bG)$;
		\item
		for every reductive group $\bG/\Q$ such that the maximal $\Q$-split torus
		 in the centre of $\bG$ is trivial,
		 $\cd_{\cpct}(\bG,-)\le \dim(X_{\bG})-\rank_{\R}(\bG)$.
	\end{enumerate}
\end{proposition}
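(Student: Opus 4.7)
The plan is to combine three tools: the long exact sequence (\ref{exactseq1}) relating $\tH^n_\cpct$, $\tH^n$, and $\tH^n_\partial$; the bound $\cd_\partial(\bG,-)\le D(\bG,-)$ from Theorem \ref{borelserreboundary}; and the comparison result of Corollary \ref{equality}, which turns the natural map $\tH^n_\cpct\to\tH^n$ into an isomorphism whenever $n>D(\bG,-)+1$. The central computation common to both directions is an upper bound on $D(\bG,-)$. For any proper parabolic $\bP=\bM\bA\bN$ of $\bG$, the Iwasawa decomposition gives $\dim X_\bG=\dim X_\bM+\rank_\Q\bA+\dim\bN$; combined with $\rank_\R\bG=\rank_\R\bL=\rank_\Q\bA+\rank_\R\bM$ (a maximal $\R$-split torus of $\bG$ containing $\bA$ lies in the centralizer $C_\bG(\bA)=\bL$), this yields
\begin{equation*}
\dim X_\bM-\rank_\R\bM \;=\; \dim X_\bG-\rank_\R\bG-\dim\bN \;\le\; \dim X_\bG-\rank_\R\bG-1,
\end{equation*}
since $\bN$ is non-trivial for any proper parabolic.

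Before this bound can be exploited, I need to check that each Levi $\bM$ itself satisfies the standing hypothesis, namely that the maximal $\Q$-split torus in $Z(\bM)$ is trivial. This is straightforward: $Z(\bM)\subseteq Z(\bL)$, the maximal $\Q$-split torus of $Z(\bL)$ is $\bA$, and $\bA\cap\bM$ is finite, so no non-trivial $\Q$-split torus of $Z(\bM)$ can exist. Granted this, the implication $(2)\Rightarrow(1)$ runs as follows: I apply (2) directly to each such $\bM$, obtaining $\cd_\cpct(\bM,-)\le\dim X_\bG-\rank_\R\bG-1$ and hence $D(\bG,-)\le\dim X_\bG-\rank_\R\bG-1$; Theorem \ref{borelserreboundary} then gives the same upper bound on $\cd_\partial(\bG,-)$. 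For $n>\dim X_\bG-\rank_\R\bG$, the terms $\tH^n_\cpct(\bG,-)$ and $\tH^{n+1}_\cpct(\bG,-)$ of the sequence (\ref{exactseq1}) vanish by (2) while $\tH^n_\partial(\bG,-)$ vanishes by the boundary bound, so $\tH^n(\bG,-)=0$.

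For the reverse implication $(1)\Rightarrow(2)$, I will argue by induction on $\dim\bG$, the rank-zero case being trivial (then $Y(K_f)$ is compact so that $\tH^n=\tH^n_\cpct$ and (1) and (2) coincide automatically). In the inductive step, each Levi $\bM$ of a proper parabolic has strictly smaller dimension and still satisfies the standing hypothesis, so (2) is available for $\bM$ by the inductive hypothesis; the same computation as before yields $D(\bG,-)\le\dim X_\bG-\rank_\R\bG-1$, so for $n>\dim X_\bG-\rank_\R\bG\ge D(\bG,-)+2$ Corollary \ref{equality} identifies $\tH^n_\cpct(\bG,-)$ with $\tH^n(\bG,-)$, which vanishes by (1). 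I do not expect any serious obstacle. The only point demanding genuine care is ensuring the ``trivial maximal $\Q$-split central torus'' hypothesis is inherited by the Levis of proper parabolics, since both directions of the argument pass through a bound on $D(\bG,-)$ and without this inheritance neither the direct application of (2) nor the inductive step would be legitimate.
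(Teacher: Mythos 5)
Your argument is correct and follows the template the paper itself intends: the paper's proof of Proposition~\ref{vanishingtheoremconj} simply says ``the same method as Theorem~\ref{vanishingtheorem}'', and your combination of the Iwasawa-decomposition estimate $\dim X_\bM - \rank_\R\bM = \dim X_\bG - \rank_\R\bG - \dim\bN \le \dim X_\bG - \rank_\R\bG - 1$, the inheritance of the standing hypothesis by the Levi factors $\bM$, Corollary~\ref{equality}, and the long exact sequence~(\ref{exactseq1}), with an induction on $\dim\bG$ (in place of the paper's induction on $\rank_\Q\bG$, an equivalent choice), is exactly that method. One small slip in the $(1)\Rightarrow(2)$ step: you assert the chain $n>\dim X_\bG - \rank_\R\bG \ge D(\bG,-)+2$, but your estimate only yields $D(\bG,-)\le \dim X_\bG - \rank_\R\bG - 1$; fortunately $n>\dim X_\bG - \rank_\R\bG$ already gives $n\ge \dim X_\bG - \rank_\R\bG + 1 \ge D(\bG,-)+2 > D(\bG,-)+1$, so Corollary~\ref{equality} still applies and the argument goes through unchanged.
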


\begin{proof}
	One proves this by the same method as Theorem \ref{vanishingtheorem}.
\end{proof}

\end{document}